\DeclareFontFamily{OT1}{pzc}{}
\DeclareFontShape{OT1}{pzc}{m}{it}{ <-> s*[1.2] pzcmi7t }{}
\DeclareMathAlphabet{\mathpzc}{OT1}{pzc}{m}{it}
\theoremstyle{plain}
\newtheorem{thrm}{Theorem}[section]
\newtheorem{lemma}[thrm]{Lemma}
\newtheorem{prop}[thrm]{Proposition}
\newtheorem{cor}[thrm]{Corollary}
\newtheorem{que}[thrm]{Question}
\theoremstyle{definition}
\newtheorem{defn}[thrm]{Definition}
\newtheorem{ex}[thrm]{Example}
\DeclareMathOperator{\Av}{Av}
\DeclareMathOperator{\col}{col}
\newcommand{\lecons}{\leq_{\textup{cons}}}
\newcommand{\nlecons}{\nleq_{\textup{cons}}}
\newcommand{\congcons}{\cong_{\textup{cons}}}
\newcommand{\Harpoon}[1]{\hspace{-0.3mm}\! \upharpoonright_{#1}}
\newcommand{\Harpoonc}[1]{\hspace{-0.1mm}\! \upharpoonright_{#1}}
\newcommand{\down}{\hspace{-0.9mm} \downarrow}
\newcommand{\lvertb}{\lvert \hspace{0.5mm}}
\newcommand{\lvertm}{\hspace{0.5mm} \lvert \hspace{0.75mm}}
\newcommand{\lverte}{\hspace{0.5mm} \lvert}
\newcommand{\h}{\hspace{0.75mm}}
\newcommand{\hh}{\hspace{0.4mm}}
\DeclareMathOperator{\Eq}{\mathpzc{Eq}}
\newcommand{\cequiv}[3]{\underline{\colorbox{#1}{#2}}_{\hh #3} \hspace{-0.8mm}}
\newenvironment{thmenumerate}{\begin{enumerate}[label=\textup{(\roman*)},leftmargin=10mm]}{\end{enumerate}}
\newenvironment{nitemize}{\begin{itemize}[label=\textbullet, leftmargin=5mm]}{\end{itemize}}
\title[Well quasi-order and atomicity for equivalence relations]{Decidability of well quasi-order and atomicity for equivalence relations under embedding orderings}
\author{V. Ironmonger}
\author{N. Ru\v{s}kuc}
\address{School of Mathematics and Statistics, University of St Andrews, St Andrews, Scotland, UK}
\email{$\{$vli,nr1$\}$@st-andrews.ac.uk}
\keywords{Equivalence relation, embedding, poset, well quasi order, antichain, atomic, joint embedding property, graph, path, subpath, decidability.}
\subjclass[2010]{06A07, 05A18, 05C38, 03C13}
\date{\today}
\begin{document}

\maketitle

\begin{abstract}
We consider the posets of equivalence relations on finite sets under the standard embedding ordering and under the consecutive embedding ordering. In the latter case, the relations are also assumed to have an underlying linear order, which governs consecutive embeddings. For each poset we ask the well quasi-order and atomicity decidability questions: Given finitely many equivalence relations $\rho_1,\dots,\rho_k$, is the downward closed set $\Av(\rho_1,\dots,\rho_k)$ consisting of all equivalence relations which do not contain any of $\rho_1,\dots,\rho_k$: (a) well-quasi-ordered, meaning that it contains no infinite antichains? and (b) atomic, meaning that it is not a union of two proper downward closed subsets, or, equivalently, that it satisfies the joint embedding property?
\end{abstract}

\section{Introduction}
\label{section intro}

Embedding orderings of different types of combinatorial structures have over the years proved a fruitful field of research 
with a pleasing mix of combinatorics and order theory.  
R. Fra\"{\i}ss\'{e} was the most notable pioneer of this interface between combinatorics and model theory, and his monograph \cite{fraisse00} remains an excellent introduction.
The most prominent structures that have been investigated in this context include graphs, different types of digraphs, permutations and words. 
By considering all finite structures of a given type up to isomorphism, and fixing an appropriate notion of what it means for one structure to embed into another, one is faced with a countably infinite poset which typically has a very complex structure. The challenge then is to try and gain insights into this infinite poset. One way of doing this is by investigating downward closed subsets, which can be equivalently described as avoidance sets, i.e. sets of all structures which do not contain any of a (finite or infinite) list of structures. By restricting to those sets defined by finitely many forbidden substructures one can phrase questions as algorithmic decidability problems.

Thus, fixing a collection of finite combinatorial structures $\mathcal{C}$ and a property $\mathcal{P}$, one can ask whether the following problem is algorithmically decidable:
given finitely many structures $A_1,\dots,A_k\in\mathcal{C}$ does the downward closed set
$\Av(A_1,\dots,A_k)$, which consists of all members of $\mathcal{C}$ which do not contain any of the $A_i$,
 satisfy property $\mathcal{P}$.
 
Two properties that have been much investigated are well quasi-order and atomicity. The former means absence of infinite antichains (as well as infinite descending chains, but this is automatic for collections of finite structures); the latter is equivalent to the joint embedding property -- for any two members in the poset there is a member that contains them both.

Over the years there has been considerable variance in nomenclature in literature for these two properties.
Well quasi-order has been called  finite basis property in Higman's seminal paper \cite{higman}, where he also states that
partial well-order was used by Erd\"{o}s and Rado.
Atomic sets were called ideals in \cite{fraisse00}, as well as ages and directed sets.
We have selected the term well quasi-order as it seems to have become common in recent literature, and atomicity as it expresses the structural significance of this property as can be seen in \cite[Section 3.3]{vatter} in the case of permutations.

There are many papers dealing with this subject matter, most notably on well quasi-order in graphs.
For example, the wqo problem for the  
ordinary subgraph ordering is decidable as an immediate consequence of \cite{ding92}.  The Graph Minor Theorem \cite{rob-sey-04} asserts that under the minor ordering (which is, strictly speaking not an embedding ordering) the set of all graphs is wqo (and hence the wqo problem is trivially decidable).
By way of contrast, the wqo problem is wide open for some other natural orderings, notably the induced subgraph ordering.
The same is true for some special classes or variations of graphs, such as bipartite graphs, digraphs and tournaments, as well as for other combinatorial structures, notably permutations.
One further exception is provided by the case of words over a finite alphabet under the (scattered) subword ordering, where the entire poset is wqo due to the so-called Higman's Lemma \cite{higman}, which we will briefly review in Section \ref{section prelims}, and hence the wqo problem for downward closed classes in this case is trivially decidable.
We refer the reader to the first half of \cite{cherlin11} for a motivational survey; \cite{hucz15} takes a more comparative-combinatorial viewpoint, and \cite{liu20} is the most up to date survey focussing on graphs.
Turning to the atomicity problem, a good introduction into the concept and its structural significance is given in \cite{vatter}.
A recent major result shows that the property is undecidable for the induced subgraph ordering \cite{braun19}. The problem is still open for permutations, but \cite{braun21} shows that atomicity is undecidable for `3-dimensional' permutations, i.e. sets with three linear orders.

An embedding ordering that has recently come to prominence is the so-called \emph{consecutive} ordering. 
For words, this would be the usual consecutive subword (sometimes called factor) ordering.
In permutations, this ordering arises when the entries are required to embed consecutively in one (out of the two available) dimensions/linear orders. 
We refer the reader to \cite{eli16} for a survey of this ordering for permutations, and \cite{eli18} for some recent insights into the structure of the resulting poset.
In general, to be able to define consecutive ordering,
one requires the presence of a linear order in the language for our combinatorial structures.
 In \cite{alm} it is proved that the wqo problem is decidable for the consecutive embedding ordering on words over a finite alphabet. This was taken further in \cite{mr}, where it was proved that wqo is also decidable for the consecutive embedding ordering on permutations, and that atomicity is decidable for consecutive embedding orderings on both words and permutations. In all these cases the key idea is to re-interpret the problem in terms of subpath ordering on the set of all paths in a finite digraph. 

Motivated by these similarities and the desire to gain more understanding into the general behaviour of consecutive orderings, in this paper we take another type of very elementary combinatorial structure, namely equivalence relations. 
Of course, they do not come naturally equipped with a linear order, so to be able to consider consecutive orderings we  add one to the signature for our structures. 
In order to fill a somewhat surprising gap in literature, we also consider the (non-consecutive) embedding ordering for equivalence relations.

Thus we arrive at the topic and content of the present paper:
we investigate the collection of all equivalence relations on finite sets under two orderings -- the standard (or non-consecutive) embedding ordering and the consecutive embedding ordering.   We will consider decidability of the well quasi-order and atomicity problems under each ordering.  Our first result proves that the collection is well quasi-ordered under the non-consecutive embedding order; therefore, the well quasi-order problem is trivially decidable.  For the remaining cases, we will find equivalent conditions to well quasi-order or atomicity, and decidability will follow by showing these conditions to be testable.  The condition for atomicity under the non-consecutive embedding order can be stated now, whereas the conditions for the consecutive embedding order need more introduction and so are not given at this stage.  Hence our main theorems are: 

\vspace{-2mm}
\begin{nitemize}
\item The collection of equivalence relations under the non-consecutive embedding order is well quasi-ordered (Theorem \ref{thrm wqo standard embedding});
\item A collection defined by finitely many forbidden equivalence relations under the non-consecutive embedding order is atomic if and only if, for each forbidden equivalence relation, all of its classes have the same size (Theorem \ref{thrm atomicity non cons}); in particular, the atomicity problem is decidable for such collections (Theorem \ref{decidability atomicity non cons});
\item The well quasi-order problem is decidable for collections defined by finitely many forbidden equivalence relations
 under the consecutive embedding order (Theorems \ref{thrm wqo cdn cons}, \ref{thrm decidability wqo cons});
\item The atomicity problem is decidable for collections defined by finitely many forbidden equivalence relations under the consecutive embedding order (Theorems  \ref{thrm atomicity cons}, \ref{thrm decidability atomicity cons}).
\end{nitemize}


The paper is organised as follows.  We begin by giving some necessary preliminary results and notation in Section \ref{section prelims}.  Following this we look at wqo and atomicity for equivalence relations under the non-consecutive embedding order.  In Section \ref{section wqo noncons} we show that the poset of finite equivalence relations under the non-consecutive embedding order is wqo (Theorem \ref{thrm wqo standard embedding}).  Then in Section \ref{section atomicity noncons} we answer the atomicity problem for the poset of equivalence relations under the non-consecutive embedding order (Theorem \ref{decidability atomicity non cons}).  

Sections \ref{section digraphs}--\ref{section atomicity cons} tackle the well quasi-order and atomicity problems for the poset of finite equivalence relations under the consecutive embedding order.  We will relate the poset of equivalence relations in an avoidance set to the poset of paths in certain finite digraphs.  We rely on results from \cite{mr} which give criteria for these posets of paths to be well quasi-ordered or atomic; these are introduced in Section \ref{section digraphs}.  Section \ref{section factor graph} introduces the technical tools needed to apply these to equivalence relations, and we utilise these tools in Section \ref{section wqo unbounded} to give criteria for wqo in two particular cases.  To tackle the remaining case, in Section \ref{section coloured equivs} we introduce a new poset of coloured equivalence relations; combining these results enables us to answer the wqo problem in general in the affirmative (Theorem \ref{thrm decidability wqo cons}).  Finally, in Section \ref{section atomicity cons} we answer the atomicity problem for the poset of equivalence relations under the consecutive embedding ordering (Theorem \ref{thrm decidability atomicity cons}).
The paper concludes with some remarks and open problems in Section \ref{sec:conc}.

\section{Preliminaries}
\label{section prelims}

An \emph{equivalence relation}, considered as a relational or combinatorial structure, is 
simply a pair $(X, \rho)$, where $X$ is a set and $\rho\subseteq X\times X$ is a binary relation which is reflexive, symmetric and transitive.  

Often we will denote an equivalence relation $(X,\rho)$ as a list of its equivalence classes, separated by vertical bars. 
For example, $\lvertb1 \lvertm 2 \lvertm ...\lvertm n \lverte$ is the equality relation on $\{1, \dots, n\}$, whereas
$\lvertb1 \h 2\dots n \lverte$ is the full relation.  The equivalence class of an element $x \in X$ is denoted $\rho_{x}$.

We will consider two posets of equivalence relations; the first of them will use the standard embedding ordering on relational structures:
\begin{defn}
\label{defn noncons}
The (\emph{non-consecutive})  \emph{embedding ordering} on equivalence relations is given by $(X, \rho) \leq (Y, \sigma)$ if and only if there is an injective function $f:X \rightarrow Y$ such that $(x, y)\in \rho$ if and only if $(f(x), f(y)) \in \sigma$ for all $x,y\in X$. We also say that $(X,\rho)$ is a \emph{sub-equivalence relation} of $(Y, \sigma)$, and that $f$ is an \emph{embedding} of 
$(X,\rho)$ into $(Y,\sigma)$.
\end{defn}

Associated with this definition of embedding is the following definition of isomorphism.  Two equivalence relations $(X,\rho)$ and $(Y,\sigma)$ are \emph{isomorphic} if there is a bijection $f:X\rightarrow Y$ such that for all $x,y\in X$ we have $(x,y)\in\rho$ if and only if $(f(x),f(y))\in\sigma$, and we will write $(X, \rho) \cong (Y, \sigma)$; this is equivalent to $\rho$ and $\sigma$ having the same number of equivalence classes of any size.  Observe that if $X, Y$ are finite, then $(X, \rho) \cong (Y, \sigma)$ if and only if $(X, \rho) \leq (Y, \sigma)$ and $(Y, \sigma) \leq (X, \rho)$.  We will consider isomorphic equivalence relations to be equal, and gather the finite ones into a set $\Eq$.  It can be seen that every equivalence relation on a finite set is isomorphic to an equivalence relation on a subset of $\mathbb{N}$ so, without loss of generality, from now on we limit our considerations to equivalence relations of this form.  In fact, we will almost always work with equivalence relations on the set $[n]=[1, n]=\{1, \dots, n\}$ for some $n \in \mathbb{N}$.  With these conventions, the set of equivalence relations is an infinite poset under the non-consecutive embedding order, denoted by $(\Eq, \leq)$.   

Our second poset will use a consecutive embedding ordering, for which we will need our underlying sets to be linearly ordered.

\begin{defn}
\label{defn contiguous map}
Let $X=\{x_{1}, \dots, x_{n}\}$ and $Y=\{y_{1}, \dots, y_{m}\}$, and let $\leq_{X}$ and $\leq_{Y}$ be linear orders on $X$ and $Y$ respectively so that $x_{1} \leq_{X} x_{2} \leq_{X} \dots \leq_{X} x_{n}$ and $y_{1} \leq_{Y} y_{2} \leq_{Y} \dots \leq_{Y} y_{m}$. A mapping $f:X \rightarrow Y$ is \emph{contiguous} (or \emph{consecutive}) if there exists $k$ such that
$f(x_{i})=y_{k+i-1}$ for all $i\in [1, n]$. 
\end{defn}

Note that contiguous maps are always injective. 

\begin{defn}
\label{defn cons order}
Let $(X, \rho)$, $(Y, \sigma)$ be equivalence relations, and let $\leq_X$ and $\leq_Y$ be linear orders on $X$ and $Y$.
We say that $(X, \rho)$ \textit{embeds consecutively} in $(Y, \sigma)$ if there is a consecutive embedding $f:X \rightarrow Y$.  This is written $(X, \rho) \lecons (Y, \sigma)$, and we say that $(X, \rho)$ is a \emph{consecutive sub-equivalence relation} of $(Y, \sigma)$.
\end{defn}

As with the non-consecutive embedding ordering, we have a notion of isomorphism under the consecutive embedding ordering.  Two equivalence relations $(X,\rho)$ and $(Y,\sigma)$ are \emph{isomorphic} if there is a contiguous bijection $f:X\rightarrow Y$ such that for all $x,y\in X$ we have $(x,y)\in\rho$ if and only if $(f(x),f(y))\in\sigma$; this is written $(X, \rho) \congcons (Y, \sigma)$.  If $X, Y$ are finite, then $(X, \rho) \congcons (Y, \sigma)$ if and only if $(X, \rho) \lecons (Y, \sigma)$ and $(Y, \sigma) \lecons (X, \rho)$.  Again, we will consider isomorphic relations to be equal and gather the finite ones into a set $\overline{\Eq}$.  And again note that every equivalence relation on a finite set is isomorphic to an equivalence relation on a finite subset of $\mathbb{N}$, where we take the linear order to be the natural order.  Again, without loss of generality, we will restrict our considerations to equivalence relations of this type, and we will almost always work with equivalence relations with underlying set $[1, n]$ for some $n \in \mathbb{N}$.  With these conventions, the set of equivalence relations is an infinite poset under the consecutive embedding order, denoted by $(\overline{\Eq}, \lecons)$.

If $(X, \rho)$ is an equivalence relation on $n$ points, we will define the \textit{length} of $\rho$ to be $|\rho|=n$.

\begin{ex}
We have
$\lvertb1\h 2\lvertm3\lverte \lecons \hspace{-1mm} \lvertb1\lvertm2\h 3\lvertm4\lverte$ via the contiguous embedding
$1\mapsto 2$, $2\mapsto 3$, $3\mapsto 4$;
therefore $\lvertb1\h 2\lvertm3\lverte \leq \hspace{-1mm} \lvertb1\lvertm2\h 3\lvertm4\lverte$ as well.
Similarly, $\lvertb1\h 2 \lvertm 3\lverte \leq \hspace{-1mm} \lvertb1\lvertm2\h 4\lvertm3\lverte$, via the embedding
$1\mapsto 2$, $2\mapsto 4$, $3\mapsto 3$.
This embedding is clearly not contiguous, and it is easy to check that neither of the two possible contiguous mappings $[3]\rightarrow [4]$ are embeddings; therefore 
$\lvertb1\h 2\lvertm3\lverte \nlecons \hspace{-1mm} \lvertb1\lvertm2\h 4\lvertm3\lverte$.
Finally, $\lvertb1\h 2 \lvertm 3\lverte \nleq \hspace{-1mm} \lvertb1\lvertm2\lvertm3\lvertm4\lvertm5\lverte$ since it is not possible to map the class of size two injectively to a class of size one.
\end{ex}

We will be interested not only in the two posets $(\Eq,\leq)$ and $(\overline{\Eq},\lecons)$, but also their \emph{downward closed subsets}, for which we establish the basic terminology now.

\begin{defn}
\label{defn downward closed}
Let $(X, \leq)$ be a poset and $Y \subseteq X$.  We say that $Y$ is \emph{downward closed} if whenever $x \in Y$ and $y \leq x$ we have that $y \in Y$.
\end{defn}

\begin{defn}
\label{defn Av classes}
Let $(X, \leq)$ be a poset and $B \subseteq X$.  The \textit{avoidance set of $B$ under the order $\leq$} is the downward closed set 
\begin{equation*}
\Av(B)=\{x\in X: y \nleq x \hspace{3mm} \forall y \in B\},
\end{equation*}
the set of elements which \emph{avoid} $B$.  
\end{defn}

If $C \subseteq X$ is downward closed, then it can be expressed as an avoidance set $C=\Av(B)$ for some set $B$, e.g. $B=X \backslash C$.  Moreover, if $X$ has no infinite descending chains, as is the case with $(\Eq,\leq)$ and $(\overline{\Eq},\lecons)$,  we can take $B$ to be the set of minimal elements of $X \backslash C$; this choice of $B$ is the unique antichain such that $C=\Av(B)$, and will be called the \emph{basis} of $C$.  In this case, if $B$ is finite, we say that $C$ is \emph{finitely based}.

\begin{ex}
In $(\Eq,\leq)$ the downward closed set $\Av(\lvertb1\h 2 \lverte)$ consists of all identity equivalence relations, whereas in $(\overline{\Eq},\lecons)$ the set $\Av(\lvertb1\h 2 \lverte)$ consists of all equivalence relations in which no two consecutive elements belong to the same equivalence class.
By way of contrast, $\Av(\lvertb1\lvertm 2 \lverte)$ consists of all full relations in each of the posets.
\end{ex}

We will be investigating two structural, order theoretic properties for the posets $(\Eq,\leq)$ and $(\overline{\Eq},\lecons)$ and their downward closed sets, namely well quasi-order and atomicity.
The latter is quicker to introduce, so we do this first.

\begin{defn}
\label{defn atomicity}
A downward closed set $C$ in a poset $(X,\leq)$ is \emph{atomic} if $C$ cannot be expressed as a union of two downward closed, proper subsets.
\end{defn}

The following equivalent formulation is well-known, and easy to prove directly; 
see \cite[Section 2.3.11]{fraisse00} or \cite[Theorem 7.1]{hodges-mt}:

\begin{prop}
\label{prop jep}
A downward closed set $C$ in a poset $(X, \leq)$ is atomic if and only if $C$ satisfies the \emph{joint embedding property} (\emph{JEP}): for every pair of elements $x, y \in C$ there is an element $z \in C$ such that $x\leq z$ and $y \leq z$. 
\end{prop}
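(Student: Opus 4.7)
The plan is a straightforward contrapositive-style argument in both directions, using nothing beyond the definitions of atomicity and downward closure.

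For the forward direction ($C$ atomic implies JEP), I would argue by contraposition. Suppose JEP fails, so there exist $x, y \in C$ with no common upper bound in $C$. The natural candidates for a non-trivial decomposition are
\[
C_1 = \{c \in C : x \nleq c\}, \qquad C_2 = \{c \in C : y \nleq c\}.
\]
I would verify three things: (a) each $C_i$ is downward closed in $X$, since if $d \leq c$ and $x \leq d$ then $x \leq c$ by transitivity; (b) each $C_i$ is a proper subset of $C$, since $x \in C \setminus C_1$ and $y \in C \setminus C_2$; and (c) $C_1 \cup C_2 = C$, because any $c \in C$ failing to lie in either set would satisfy $x \leq c$ and $y \leq c$, contradicting the failure of JEP. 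This contradicts atomicity.

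For the reverse direction (JEP implies atomicity), again I would argue contrapositively. Suppose $C = C_1 \cup C_2$ for proper downward closed subsets $C_1, C_2$. Pick $x \in C \setminus C_1$ and $y \in C \setminus C_2$, which exist by properness. Applying JEP, there is a $z \in C$ with $x \leq z$ and $y \leq z$. Since $z \in C_1 \cup C_2$, it lies in (say) $C_1$; but then downward closure of $C_1$ forces $x \in C_1$, a contradiction. The symmetric case $z \in C_2$ contradicts $y \notin C_2$, completing the argument.

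There is no real obstacle here: the result is a clean equivalence between a combinatorial/order-theoretic property and its natural structural counterpart, and both halves are immediate once one writes down the right sets. The only minor point worth being careful about is making sure the $C_i$ defined in the forward direction are downward closed \emph{in $X$} (not merely in $C$), which follows from the downward closure of $C$ itself together with the transitivity of $\leq$.
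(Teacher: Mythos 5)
Your proof is correct and is precisely the standard direct argument: the paper itself gives no proof of this proposition, remarking only that it is ``easy to prove directly'' and citing Hodges for a model-theoretic version, and your two contrapositive halves (splitting $C$ into the avoiders of $x$ and of $y$, and conversely locating a joint extension in one of the two pieces) are exactly that direct proof. Your closing remark about downward closure in $X$ versus in $C$ is also handled correctly, since $C$ being downward closed makes the two notions coincide for subsets of $C$.
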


As mentioned in the Introduction, atomic sets have appeared in literature under several different names, in addition to the JEP,
with terms such as
ideals, ages and directed sets all used in \cite{fraisse00}.
Examples of atomic sets include the avoidance set of any single relation of arity $\geq 2$ (\cite[Section 8.1.2]{fraisse00}),
the avoidance set of a single word under the subword ordering, or the avoidance set of a single permutation under the subpermutation ordering.

\begin{ex}
Consider the poset $(\Eq,\leq)$.  The entire poset is atomic.  To see this, we check the JEP.  So let $(X,\rho)$ and $(Y,\sigma)$ be arbitrary. Without loss assume that $X$ and $Y$ are disjoint. On the set $X\cup Y$ define a relation $\rho\oplus\sigma$ by 
\[
(x,y)\in \rho\oplus\sigma\quad \Leftrightarrow\quad (x,y\in X\text{ and } (x,y)\in \rho)\text{ or } (x,y\in Y\text{ and } (x,y)\in \sigma),
\]
and it is clear that $(X\cup Y,\sigma\oplus\rho)$ embeds both $(X,\sigma)$ and $(Y,\rho)$.
On the other hand consider the downward closed set $\Av(\lvertb1\h 2 \lvertm 3 \lverte)$.
If $(X,\rho)$ belongs to this set, and if $\rho$ has a non-singleton class, then $\rho$ cannot have another class.
Therefore $\Av(\lvertb1\h 2 \lvertm 3 \lverte)=\Av(\lvertb 1\h 2\lverte)\cup\Av(\lvertb 1\lvertm 2 \lverte)$, and so
$\Av(\lvertb1\h 2 \lvertm 3 \lverte)$ is not atomic.
\end{ex}

\begin{defn}
\label{defn:atomprob}
The \emph{atomicity problem} for finitely based downward closed sets of a poset $(X,\leq)$ is the algorithmic decidability problem, which takes as its input a finite set $B\subseteq X$ and asks whether or not $\Av(B)$ is atomic.
\end{defn}

As indicated in the introduction, in this paper we are going to prove that the atomicity problem is decidable in both
$(\Eq,\leq)$ (Theorem  \ref{decidability atomicity non cons}) and $(\overline{\Eq},\lecons)$ (Theorem \ref{thrm decidability atomicity cons}).

We now move to the property of being well quasi-ordered.

\begin{defn}
\label{defn wqo}
A poset $(X, \leq)$ is \emph{well quasi-ordered (or wqo)} if it contains no infinite antichains and no infinite descending chains. 
\end{defn}

Note that because embedding orderings respect size, and we are dealing with finite structures, the non-existence of infinite descending chains is automatic, and so the wqo property is equivalent to the absence of infinite antichains.
Also note that
even though we are working with partially ordered sets, we will use the term \emph{well quasi-ordered}, rather than \emph{well partially ordered} or \emph{partially well ordered}, in order to keep with the prevailing usage in the literature.

\begin{ex}
\label{ex equiv relns not wqo cons}
In Section \ref{section wqo noncons}
we prove that $(\Eq,\leq)$ is wqo. 
The poset of equivalence relations under the consecutive embedding order is not well quasi-ordered;
an infinite antichain is given by the set $\bigl\{\lvertb 1 \h n \lvertm 2 \lvertm \dots \lvertm n-1 \lverte \hspace{1.3mm}: n=1, 2, \dots \bigr\}$.
\end{ex}

\begin{defn}
\label{defn:wqoprob}
The \emph{wqo problem} for finitely based downward closed sets of a poset $(X,\leq)$ is the algorithmic decidability problem, which takes as its input a finite set $B\subseteq X$ and asks whether or not $\Av(B)$ is wqo.
\end{defn}

We make the following straightforward observations:

\begin{lemma}
\label{lemma prelims wqo}
\begin{thmenumerate}
\item \label{lemma subset of wqo set}
Any subset of a wqo set is also wqo. 
\item \label{lemma finite set and wqo set 2}
If $X$ is a poset and $Y$ is a finite subset of $X$, then $X$ is wqo if and only if $X\backslash Y$ is wqo. 
\end{thmenumerate}
\end{lemma}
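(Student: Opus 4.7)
The plan is to verify both parts directly from the definition of well quasi-order, using the fact that wqo amounts to the conjunction of two conditions: no infinite antichains and no infinite descending chains. In each case I will argue by contraposition, producing the forbidden configuration in the larger poset from one in the smaller poset (or vice versa).

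For part (i), the key observation is that if $Y \subseteq X$ then the ordering on $Y$ is simply the restriction of the ordering on $X$. Consequently, any antichain of $Y$ is also an antichain of $X$, and any descending chain of $Y$ is also a descending chain of $X$. So if $Y$ admits an infinite antichain or an infinite descending chain, then so does $X$, contradicting the assumption that $X$ is wqo.

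For part (ii), the forward direction is an immediate application of (i), since $X \setminus Y$ is a subset of $X$. For the converse, assume $X \setminus Y$ is wqo and suppose for contradiction that $X$ is not; then $X$ contains either an infinite antichain $A$ or an infinite descending chain $C$. Since $Y$ is finite, $A \setminus Y$ (respectively $C \setminus Y$) is still infinite, and it remains an antichain (respectively a descending chain) in $X \setminus Y$, contradicting the wqo assumption on $X \setminus Y$.

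There is no real obstacle here; the argument amounts to unwinding definitions together with the basic fact that removing finitely many elements from an infinite set leaves an infinite set. The only minor care needed is to address both clauses of the definition of wqo (antichains and descending chains), even though for the posets $(\Eq,\leq)$ and $(\overline{\Eq},\lecons)$ of interest the descending chain condition is automatic.
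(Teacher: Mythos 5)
Your argument is correct; the paper states this lemma as a ``straightforward observation'' and omits the proof entirely, and what you have written is exactly the standard definition-unwinding argument the authors clearly intend. Both parts are handled properly, including the minor care of addressing descending chains as well as antichains.
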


Since $(\Eq,\leq)$ is wqo, it follows that all its downward closed subsets are also wqo, and the wqo problem is trivial in this case.
In the case of $(\overline{\Eq},\lecons)$ the problem is non-trivial, and we will prove it is decidable in Section \ref{section wqo in general}.

A key tool in establishing wqo in different contexts is the so-called Higman's Lemma. The result was originally proved in a general context of universal algebra, but we only give a specialisation to free semigroups and an immediate corollary that will be of use to us.

\begin{defn}
\label{defn dom order}
Let $(X, \leq_X)$ be a poset and $X^{*}$ be the set of words over $X$.  We define the \emph{domination order} $\leq_{X^{*}}$ on $X^{*}$ by 
\begin{equation*}
x_{1}x_{2}...x_{k} \leq_{X^{*}} y_{1}y_{2}...y_{l}
\end{equation*}
if and only if there is a subsequence $j_{1} < j_{2}<...<j_{k}$ of $[1, l]$ such that $x_{t}\leq_{X} y_{j_{t}}$ for $t=1, ..., k$. 
\end{defn}

\begin{lemma}[Higman's Lemma, \cite{higman}]
\label{lemma Higman}
Let $(X, \leq_{X})$ be a poset and $(X^{*}, \leq_{X^{*}})$ be the poset of words over $X$ with the domination order.  If $X$ is wqo under $\leq_{X}$ then $X^{*}$ is wqo under $\leq_{X^{*}}$.
\end{lemma}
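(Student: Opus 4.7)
The plan is to prove Higman's Lemma by the Nash-Williams minimal bad sequence argument, which is arguably the cleanest route and adapts well to the setting of this paper. Suppose for contradiction that $X^\ast$ is not wqo. Then since there are no infinite descending chains (words have finite length), there must exist a \emph{bad sequence}: an infinite sequence $w_1, w_2, \dots$ in $X^\ast$ with $w_i \not\leq_{X^\ast} w_j$ whenever $i < j$. First I would choose a \emph{minimal} bad sequence by recursion: pick $w_1$ of minimal length among all first terms of bad sequences; having chosen $w_1,\dots,w_k$, pick $w_{k+1}$ of minimal length among all $w$ such that $w_1,\dots,w_k,w$ extends to a bad sequence. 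The existence of such a minimal sequence uses only the well-ordering of $\mathbb{N}$ applied to word lengths.

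Next I would observe that no $w_i$ can be the empty word, for otherwise $w_i \leq_{X^\ast} w_{i+1}$ trivially, contradicting badness. So I may write $w_i = a_i v_i$ with $a_i \in X$ and $v_i \in X^\ast$. Applying the hypothesis that $(X,\leq_X)$ is wqo to the sequence $(a_i)_{i\ge 1}$, I can extract an infinite non-decreasing subsequence $a_{i_1} \leq_X a_{i_2} \leq_X \cdots$ with $i_1 < i_2 < \cdots$ (this is the standard fact that every infinite sequence in a wqo contains an infinite non-decreasing subsequence, easily proved by a Ramsey-type dichotomy).

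Now I would form the hybrid sequence
\[
w_1,\ w_2,\ \dots,\ w_{i_1 - 1},\ v_{i_1},\ v_{i_2},\ v_{i_3},\ \dots
\]
Its $i_1$-th term is $v_{i_1}$, which is strictly shorter than $w_{i_1}$; so by minimality of our bad sequence, this hybrid sequence cannot be bad. Hence there is a comparable pair. I would then rule out each possibility in turn: a pair inside $w_1,\dots,w_{i_1-1}$ contradicts badness of the original sequence; a pair $w_i \leq_{X^\ast} v_{i_k}$ with $i < i_1$ gives $w_i \leq_{X^\ast} v_{i_k} \leq_{X^\ast} w_{i_k}$ (since $v_{i_k}$ is a suffix of $w_{i_k}$), again a contradiction; and a pair $v_{i_j} \leq_{X^\ast} v_{i_k}$ with $j<k$ combines with $a_{i_j} \leq_X a_{i_k}$ to give $w_{i_j} = a_{i_j} v_{i_j} \leq_{X^\ast} a_{i_k} v_{i_k} = w_{i_k}$, a final contradiction. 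Thus no bad sequence exists and $X^\ast$ is wqo.

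The main obstacle, in so far as there is one, is the minimal-bad-sequence construction itself: it requires (dependent) choice, and one has to justify that at each stage the chosen prefix really can be extended to a bad sequence — this follows from the assumption that a bad sequence exists at all, together with the fact that the length function takes values in $\mathbb{N}$ so a minimum is attained. The rest of the argument is then a bookkeeping exercise using that $v_i$ is a proper suffix of $w_i$ and that prepending $a_{i_j} \leq_X a_{i_k}$ preserves the domination order.
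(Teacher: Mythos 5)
Your proof is correct. Note first that the paper does not prove this statement at all: it is quoted as a known result with a citation to Higman's original 1952 paper, so there is no "paper's proof" to compare against. What you have written is the standard Nash--Williams minimal bad sequence argument, and all the steps check out: the existence of a bad sequence when $X^{*}$ fails to be wqo, the construction of a minimal bad sequence by recursion on word length (with the dependent-choice caveat you correctly flag), the extraction of an infinite $\leq_X$-non-decreasing subsequence of first letters $(a_{i_j})$ using the wqo hypothesis on $X$, and the three-way case analysis on a good pair in the hybrid sequence $w_1,\dots,w_{i_1-1},v_{i_1},v_{i_2},\dots$. The two small facts you rely on --- that a suffix $v_{i_k}$ of $w_{i_k}$ satisfies $v_{i_k}\leq_{X^{*}} w_{i_k}$, and that $a\leq_X a'$ and $v\leq_{X^{*}} v'$ imply $av\leq_{X^{*}} a'v'$ --- both follow immediately from Definition \ref{defn dom order}. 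This is arguably more elementary and self-contained than Higman's original argument, which works in the generality of abstract algebras with finitely many finitary operations; the minimal-bad-sequence route is the one most modern treatments use, and it is entirely appropriate here.
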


\begin{cor}
\label{cor to Higman}
The set of finite sequences of natural numbers is wqo under the ordering given by
\begin{equation*}
(x_{1},..., x_{k}) \leq (y_{1}, ..., y_{l})\quad \Leftrightarrow \quad
x_i\leq y_{j_i} \text{ for some } 1\leq j_1<\dots<j_k\leq l.
\end{equation*}
\end{cor}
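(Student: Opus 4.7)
The plan is to obtain this as an immediate consequence of Higman's Lemma (Lemma \ref{lemma Higman}) applied to the base poset $(\mathbb{N}, \leq)$, so the bulk of the work is just unpacking the definitions. First I would identify the set of finite sequences of natural numbers with the word monoid $\mathbb{N}^*$, and observe that the ordering stated in the corollary is literally the domination order $\leq_{\mathbb{N}^*}$ from Definition \ref{defn dom order}, with $(X,\leq_X)=(\mathbb{N},\leq)$: the condition $x_i \leq y_{j_i}$ for an increasing index sequence $j_1<\dots<j_k$ is exactly the clause in that definition.

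Next I would verify that $(\mathbb{N},\leq)$ is itself wqo. Since $\leq$ is a linear order with no infinite descending chain (the natural order on $\mathbb{N}$ is a well-order), both requirements in Definition \ref{defn wqo} follow at once: a linear order has no antichains of size greater than one, and $\mathbb{N}$ has no infinite descending chains.

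Having established these two points, Higman's Lemma applied to $(\mathbb{N}, \leq)$ yields that $(\mathbb{N}^*, \leq_{\mathbb{N}^*})$ is wqo, which is exactly the claim of the corollary.

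Since every step is a direct matching of definitions or a trivial observation, there is no real obstacle here; the statement is essentially a reformulation of Higman's Lemma in the particular case $X=\mathbb{N}$, recorded separately because this is the form in which it will be invoked later in the paper.
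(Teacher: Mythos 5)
Your proof is correct and is exactly the deduction the paper intends: the corollary is stated there without proof as an immediate consequence of Higman's Lemma, and your argument simply makes explicit the two observations (the stated order is the domination order over $(\mathbb{N},\leq)$, and $(\mathbb{N},\leq)$ is wqo as a well-order) that justify calling it immediate.
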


\section{WQO under the non-consecutive embedding ordering}
\label{section wqo noncons}

The purpose of this section is to show that the poset of equivalence relations under the non-consecutive embedding order is wqo.  To do this we need to show that this poset contains no infinite antichains.  

We begin by introducing another ordering on finite sequences of natural numbers, the prefix domination order.  We then express the non-consecutive embedding order on equivalence relations in terms of both the prefix domination order and the usual domination order on the sizes of its classes.  These results will allow us to tackle the wqo and atomicity problems for our poset by looking at the domination order and prefix domination order respectively.

\begin{defn}
\label{defn prefix dom}
Let $\sigma=(\sigma_{1}, \dots, \sigma_{k})$, $\tau=(\tau_{1}, \dots, \tau_{l})$ be finite sequences of natural numbers.  We say that $\sigma$ is related to $\tau$ under the \emph{prefix domination order}, written $\sigma \leq_{p} \tau$, if and only if $k \leq l$ and $\sigma_{i} \leq \tau_{i}$ for $i=1, \dots, k$.
\end{defn}

In what follows we will use the term `decreasing' for a sequence in which each entry is less than or equal to the entry preceding it.

\begin{lemma}
\label{lemma 2 orders on seqs}
If $\sigma, \tau$ are finite decreasing sequences of natural numbers then $\sigma \leq \tau$ under the domination ordering if and only if $\sigma \leq_{p} \tau$.
\end{lemma}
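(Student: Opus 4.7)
The plan is to prove both directions of the equivalence directly, observing that one direction needs no hypothesis on $\sigma, \tau$ at all, while the other uses the decreasing assumption on $\tau$ (the decreasing hypothesis on $\sigma$ turns out to be unnecessary).

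For the backward direction, assume $\sigma \leq_p \tau$, so $k \leq l$ and $\sigma_i \leq \tau_i$ for $i = 1, \dots, k$. Take $j_i = i$ for each $i \in [1,k]$; this gives a strictly increasing subsequence of $[1,l]$ witnessing $\sigma \leq \tau$ in the domination order. This direction is immediate and does not rely on monotonicity.

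For the forward direction, assume $\sigma \leq \tau$ in the domination order, witnessed by indices $1 \leq j_1 < j_2 < \dots < j_k \leq l$ with $\sigma_i \leq \tau_{j_i}$. First note that $k \leq l$, since the $j_i$ are distinct elements of $[1,l]$. A routine induction on $i$ then shows $j_i \geq i$: indeed $j_1 \geq 1$, and $j_{i+1} > j_i \geq i$ forces $j_{i+1} \geq i+1$. Since $\tau$ is decreasing, $j_i \geq i$ gives $\tau_{j_i} \leq \tau_i$, and hence $\sigma_i \leq \tau_{j_i} \leq \tau_i$ for all $i \in [1,k]$, which is exactly $\sigma \leq_p \tau$.

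There is no real obstacle here; the only subtlety is noticing that it is the decreasing nature of $\tau$ (combined with $j_i \geq i$) that forces the embedding to be realisable by the identity index map, so that one can replace the abstract choice of indices with the prefix alignment.
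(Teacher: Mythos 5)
Your proof is correct and follows essentially the same route as the paper's: the backward direction is immediate by taking $j_i=i$, and the forward direction uses $j_i\geq i$ together with the decreasing property of $\tau$ to conclude $\sigma_i\leq\tau_{j_i}\leq\tau_i$. Your added observations (the explicit induction for $j_i\geq i$ and the remark that only $\tau$ needs to be decreasing) are accurate but do not change the argument.
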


\begin{proof}
($\Leftarrow$) This direction follows immediately from the definitions. 
($\Rightarrow$) Let $\sigma \leq \tau$ under the domination order and suppose $\sigma=(\sigma_{1}, \dots, \sigma_{k})$, $\tau=(\tau_{1}, \dots, \tau_{l})$.  Then there is a sequence $j_{1}<\dots<j_{k}$ from $[1, l]$ such that $\sigma_{i} \leq \tau_{j_{i}}$ for each $i$.  Since $j_{i} \geq i$, $\tau_{j_{i}} \leq \tau_{i}$ for each $i$.  This means that $\sigma_{i} \leq \tau_{j_{i}} \leq \tau_{i}$ for each $i$, so it is true that $\sigma \leq_{p} \tau$.
\end{proof}

Given an equivalence relation $(X, \rho)$ with equivalence classes $C_{1}, ..., C_{N}$ in decreasing size order, we will assign to $(X, \rho)$ the sequence of natural numbers $(|C_{1}|, |C_{2}|, ... |C_{N}|)$ and denote this sequence $\pi(X, \rho)$, or just $\pi(\rho)$. 

\begin{lemma}
\label{lemma noncons cdn combined}
If $(X, \rho)$, $(Y, \sigma)$ are equivalence relations, the following are equivalent:
\begin{thmenumerate}
\item
\label{cdn cmd 1} 
$(X, \rho) \leq (Y, \sigma)$ under the non-consecutive embedding order;
\item
\label{cdn cmd 2}
$\pi(X, \rho) \leq \pi(Y, \sigma)$ under the domination order;
\item
\label{cdn cmd 3}
$\pi(X, \rho) \leq_{p} \pi(Y, \sigma)$ under the prefix domination order.
\end{thmenumerate}
\end{lemma}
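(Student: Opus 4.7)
The plan is to prove the equivalences in a loop: \ref{cdn cmd 3} $\Rightarrow$ \ref{cdn cmd 1} $\Rightarrow$ \ref{cdn cmd 2} $\Rightarrow$ \ref{cdn cmd 3}. The last of these, \ref{cdn cmd 2} $\Leftrightarrow$ \ref{cdn cmd 3}, is immediate from Lemma \ref{lemma 2 orders on seqs} because, by definition, $\pi(\rho)$ and $\pi(\sigma)$ are decreasing sequences. So the real content lies in the two remaining implications, both of which are combinatorial statements about matching the equivalence classes of $\rho$ with those of $\sigma$.

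For \ref{cdn cmd 3} $\Rightarrow$ \ref{cdn cmd 1}, let the equivalence classes of $\rho$ in decreasing size order be $C_1,\dots,C_k$ and those of $\sigma$ in decreasing size order be $D_1,\dots,D_l$, and suppose $\pi(\rho)\leq_p\pi(\sigma)$. Then $k\le l$ and $|C_i|\le |D_i|$ for every $i\in[1,k]$, so I can choose an injection $f_i\colon C_i\to D_i$ for each $i$ and glue these into a single map $f\colon X\to Y$. I will then verify that $f$ is an embedding in the sense of Definition \ref{defn noncons}: if $(x,y)\in\rho$ then $x,y$ lie in a common $C_i$, so $f(x),f(y)\in D_i$ and hence $(f(x),f(y))\in\sigma$; conversely, if $(f(x),f(y))\in\sigma$ then $f(x),f(y)$ lie in a common $D_j$, forcing the indices of the classes of $x$ and $y$ to coincide (because the $D_i$ are pairwise disjoint), so $(x,y)\in\rho$.

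For \ref{cdn cmd 1} $\Rightarrow$ \ref{cdn cmd 2}, let $f\colon X\to Y$ be an embedding. The defining property of embeddings means that $f$ sends each class $C_i$ of $\rho$ injectively into a single class of $\sigma$, and moreover that distinct classes of $\rho$ must land in distinct classes of $\sigma$ (since two elements with $f$-images in a common $D$-class must themselves be $\rho$-related). Hence there is an injection $\phi\colon[1,k]\to[1,l]$ with $|C_i|\le |D_{\phi(i)}|$ for all $i$. The point I would then exploit is that $\pi(\sigma)$ is decreasing: for each fixed $i\in[1,k]$, the classes $D_{\phi(1)},\dots,D_{\phi(i)}$ are $i$ distinct classes of $\sigma$ each of size $\ge |C_i|$ (using $|C_r|\ge |C_i|$ for $r\le i$), so the $i$ largest classes $D_1,\dots,D_i$ also each have size $\ge |C_i|$. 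In particular $|D_i|\ge |C_i|$, which directly yields $\pi(\rho)\leq_p\pi(\sigma)$, hence \ref{cdn cmd 2} via Lemma \ref{lemma 2 orders on seqs}.

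None of the steps appears genuinely difficult; the only subtlety worth flagging is the last one, where a naive attempt to read off a domination witness by sorting the values $\phi(1),\dots,\phi(k)$ into an increasing sequence $j_1<\dots<j_k$ does not by itself give $|C_i|\le|D_{j_i}|$. The fix, as indicated above, is to pass through the prefix domination condition using the decreasing nature of $\pi(\rho)$ and $\pi(\sigma)$, and only afterwards translate back to the domination order via Lemma \ref{lemma 2 orders on seqs}. This is why I prefer to close the cycle through \ref{cdn cmd 3} rather than trying to prove \ref{cdn cmd 1} $\Rightarrow$ \ref{cdn cmd 2} directly.
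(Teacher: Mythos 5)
Your proposal is correct and follows essentially the same route as the paper: both use Lemma \ref{lemma 2 orders on seqs} to pass between the domination and prefix domination orders, both build the embedding class-by-class from the size inequalities, and both handle the hard direction (from an embedding to prefix domination) by a pigeonhole argument exploiting that the class-size sequences are decreasing. The only difference is cosmetic --- you traverse the cycle of implications in the opposite order and phrase the pigeonhole step as ``at least $i$ classes of $\sigma$ have size $\geq |C_i|$'' rather than the paper's case split on whether $t \leq f'(t)$.
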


\begin{proof}
Let $(X, \rho)$, $(Y, \sigma)$ have equivalence classes $C_{1}, ..., C_{n}$ and $K_{1}, ...K_{m}$ respectively, listed in order of decreasing size.  

\ref{cdn cmd 3} $\Rightarrow$ \ref{cdn cmd 2} If $\pi(X, \rho) \leq_{p} \pi(Y, \sigma)$ then by Lemma \ref{lemma 2 orders on seqs} we have that $\pi(X, \rho) \leq \pi(Y, \sigma)$ under the domination order. 

\ref{cdn cmd 2} $\Rightarrow$ \ref{cdn cmd 1} Suppose $\pi(X, \rho) \leq \pi(Y, \sigma)$ under the domination order, so there is a sequence $j_{1}<j_{2}<...<j_{n}$ of $[1, m]$ such that $|C_{t}|\leq|K_{j_t}|$ for $t\in[1, n]$. This means that we can define an injective mapping $f$ from the set of equivalence classes of $\rho$ to those of $\sigma$ by sending $C_{t}$ to $K_{j_{t}}$ for each $t$.  
This is an injective function $f:X \rightarrow Y$ such that if $x, y \in X$ then
\begin{equation*}
x \leq_{\rho} y \Leftrightarrow f(x) \leq_{\sigma} f(y).
\end{equation*}
Therefore, $(X, \rho) \leq (Y, \sigma)$ as required.

\ref{cdn cmd 1} $\Rightarrow$ \ref{cdn cmd 3}  Suppose $(X, \sigma) \leq (Y, \rho)$, so there is an injective function $f:X \rightarrow Y$ that preserves equivalence classes.  It is clear that $n \leq m$ and $f$ induces an injective mapping 
$f^{\prime}:[1, n] \rightarrow [1, m]$
such that $f$ maps $C_{i}$ to $K_{f^{\prime}(i)}$ for $i=1, \dots, n$.  
Note that $|C_i|\leq |K_{f^\prime(i)}|$ for $i=1, \dots, n$.
Consider an arbitrary $t \in [1, n]$.
If $t \leq f^{\prime}(t)$ then $|C_{t}| \leq |K_{f^{\prime}(t)}| \leq |K_{t}|$ as required.  If $t > f^{\prime}(t)$ then there exists $j < t$ such that $f^{\prime}(j) \geq t$.  Then we have that $|C_{t}| \leq |C_{j}| \leq |K_{f^{\prime}(j)}| \leq |K_{t}|$, so $\pi(X, \rho) \leq_{p} \pi(Y, \sigma)$, completing the proof.
\end{proof}

\begin{thrm}
\label{thrm wqo standard embedding}
The poset of equivalence relations on finite sets under the non-consecutive embedding order is well quasi-ordered.
\end{thrm}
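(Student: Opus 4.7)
The plan is to reduce the wqo question on $(\Eq,\leq)$ to a wqo statement about sequences of natural numbers, for which Higman's Lemma (or rather its Corollary \ref{cor to Higman}) already does the heavy lifting. The key observation is that Lemma \ref{lemma noncons cdn combined} tells us that the map $\pi$, sending an equivalence relation to the decreasing sequence of its class sizes, is an order-embedding of $(\Eq,\leq)$ into the set $\mathcal{D}$ of finite decreasing sequences of positive integers equipped with the domination order. Since $\pi$ is injective up to isomorphism (two finite equivalence relations having the same decreasing sequence of class sizes are isomorphic), it suffices to prove that $\mathcal{D}$ is wqo under the domination order.

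First, I would invoke Corollary \ref{cor to Higman} to conclude that the set of \emph{all} finite sequences of natural numbers is wqo under the domination order. Then, since $\mathcal{D}$ is a subset of this poset, Lemma \ref{lemma prelims wqo}\ref{lemma subset of wqo set} immediately gives that $\mathcal{D}$ is also wqo under the domination order.

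Finally, to transfer back, suppose for contradiction that $(\Eq,\leq)$ contained an infinite antichain $\{\rho_n\}_{n\in\N}$. Applying Lemma \ref{lemma noncons cdn combined} (equivalence of \ref{cdn cmd 1} and \ref{cdn cmd 2}), the corresponding sequences $\{\pi(\rho_n)\}_{n\in\N}$ would form an infinite antichain in $\mathcal{D}$ under the domination order, contradicting the previous paragraph. Since descending chains in $(\Eq,\leq)$ cannot be infinite (the embedding order respects cardinality), this establishes wqo.

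There is essentially no obstacle here: all the conceptual work has been isolated in Lemma \ref{lemma noncons cdn combined}, which translates the equivalence relation ordering into a sequence ordering, and in Higman's Lemma, which handles the sequences. The only thing to check carefully is that the restriction of a wqo poset to a subset is wqo, which is exactly Lemma \ref{lemma prelims wqo}\ref{lemma subset of wqo set}. The proof will therefore be short, with the burden of work having been discharged in the preparatory lemmas of this section.
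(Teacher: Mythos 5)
Your proposal is correct and follows essentially the same route as the paper: apply $\pi$ to a putative infinite antichain, use Corollary \ref{cor to Higman} to find a comparable pair of sequences, and pull the comparison back via Lemma \ref{lemma noncons cdn combined}. The detour through the subposet $\mathcal{D}$ of decreasing sequences is harmless but unnecessary, since the paper applies the corollary to the image sequences directly.
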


\begin{proof}
Aiming for a contradiction, suppose that there is an infinite antichain of equivalence relations $(X_{1}, \rho_{1}), (X_{2}, \rho_{2}), \dots $.  Applying $\pi$ to each element of this antichain gives the sequence $\pi(X_{1}, \rho_{1}), \pi(X_{2}, \rho_{2}), \dots $of finite sequences of natural numbers.  By Corollary \ref{cor to Higman}, finite sequences of natural numbers are well quasi-ordered, so $\pi(X_{i}, \rho_{i}) \leq \pi(X_{j}, \rho_{j})$ for some $i$ and $j$.  Then by Lemma \ref{lemma noncons cdn combined}, $(X_{i}, \rho_{i}) \leq (X_{j}, \rho_{j})$, a contradiction.  We conclude that the poset of equivalence relations under the non-consecutive embedding order is wqo.
\end{proof}

For completeness we record the following immediate result, obtained by combining Theorem \ref{thrm wqo standard embedding}
and Lemma \ref{lemma prelims wqo}\ref{lemma subset of wqo set}:

\begin{cor}
\label{cor wqo noncons Av classes}
All finitely based avoidance sets of equivalence relations under the non-consecutive embedding order are wqo. \qed
\end{cor}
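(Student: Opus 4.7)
The statement is an immediate consequence of the two results explicitly cited just before it, so the plan is essentially a one-line deduction rather than a new argument. My approach would be as follows.

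First, I would invoke Theorem \ref{thrm wqo standard embedding} to assert that the entire poset $(\Eq,\leq)$ of finite equivalence relations under the non-consecutive embedding order is wqo. Second, I would observe that any finitely based avoidance set $\Av(B)$ with $B\subseteq\Eq$ finite is, by Definition \ref{defn Av classes}, a subset of $\Eq$. Applying Lemma \ref{lemma prelims wqo}\ref{lemma subset of wqo set} to this subset then yields that $\Av(B)$ is itself wqo.

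There is no real obstacle here: the work has already been done in proving Theorem \ref{thrm wqo standard embedding}, and the hereditary nature of wqo under taking subsets makes the passage to avoidance sets automatic. In fact the hypothesis of finite basedness is not even used—every downward closed subset of $\Eq$ (finitely based or not) is wqo by exactly the same reasoning—but stating the corollary in the finitely based form matches the framing of the wqo decidability problem introduced in Definition \ref{defn:wqoprob}. Accordingly, I would simply write a two-sentence proof citing these two results and end with \qed, as the authors have done.
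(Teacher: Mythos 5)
Your proposal is correct and is exactly the paper's argument: the authors state that the corollary is obtained by combining Theorem \ref{thrm wqo standard embedding} with Lemma \ref{lemma prelims wqo}\ref{lemma subset of wqo set}, precisely the two-step deduction you describe. Your observation that finite basedness is not actually needed is also accurate.
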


\section{Atomicity under the non-consecutive embedding ordering}
\label{section atomicity noncons}
Now we will consider atomicity for equivalence relations under the non-consecutive embedding ordering, and all avoidance sets in this section will be under this order.  We begin with a couple of illustrative examples and then come to the main results of this section.

\begin{ex}
\label{ex uniform classes atomic} 
Consider the avoidance set $C=\Av( \lvertb 1\h 2\h 3 \lvertm 4\h 5\h 6 \lverte)$; it consists of  all equivalence relations with at most one class of size $\geq3$.  Take two elements $\sigma, \rho \in C$ with equivalence classes $C_{1}, \dots, C_{m}$ and $K_{1}, \dots, K_{n}$ respectively, listed in order of decreasing size.  Let $p=\max\{|C_{1}|, |K_{1}|\}$ and $t=\max\{m, n\}$.  Now let $\theta$ be an equivalence relation with $t-1$ classes of size 2 and one class of size $p$, so $\pi(\theta)=(p, 2, 2, \dots, 2)$.  Clearly $\theta\in C$, and by Lemma \ref{lemma noncons cdn combined} we also have that $\sigma, \rho \leq \theta$.  This means that $C$ satisfies the JEP and therefore is atomic by Proposition \ref{prop jep}.
\end{ex}

\begin{ex}
\label{ex non-uniform classes not atomic} 
Consider the avoidance set $C=\Av( \lvertb 1\h 2 \lvertm 3\lverte)$, which contains the equivalence relations $\sigma=\lvertb 1 \lvertm 2 \lverte$ and $\rho=\lvertb 1\h 2 \lverte$. Any equivalence relation containing both $\sigma$ and $\rho$ must contain $\lvertb 1\h 2 \lvertm 3\lverte$ and so cannot be in $C$.  Hence $C$ does not satisfy the JEP and therefore by Proposition \ref{prop jep}, $C$ is not atomic.
\end{ex}

\begin{defn}
\label{defn uniform relation}
An equivalence relation is \textit{uniform} if all its equivalence classes are the same size.
\end{defn}

\begin{thrm}
\label{thrm atomicity non cons}
If $C=\Av(B)$ is a finitely based avoidance set with basis $B$, then $C$ is atomic under the non-consecutive embedding order if and only if all elements of $B$ are uniform.
\end{thrm}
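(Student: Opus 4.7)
The plan is to exploit Lemma~\ref{lemma noncons cdn combined} (in particular the equivalence (i)$\Leftrightarrow$(iii)) to reformulate everything in terms of prefix domination on class-size sequences $\pi(\cdot)$. This has the pleasant feature of being a pointwise order, which immediately suggests a natural ``join'' operation on equivalence relations: combine $\sigma$ and $\rho$ by taking the pointwise maximum of their class-size sequences.

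For the $\Leftarrow$ direction, suppose every $\beta\in B$ is uniform. Given $\sigma,\rho\in C$ with $\pi(\sigma)=(s_1,\dots,s_m)$ and $\pi(\rho)=(r_1,\dots,r_l)$, I set $n=\max(m,l)$ and define $\tau$ by $\pi(\tau)=(t_1,\dots,t_n)$ where $t_j=\max(s_j,r_j)$, with the convention $s_j=0$ for $j>m$ and $r_j=0$ for $j>l$. Since $\pi(\sigma)$ and $\pi(\rho)$ are non-increasing, so is $\pi(\tau)$, and by construction $\sigma,\rho\leq\tau$. To verify $\tau\in C$, take any $\beta\in B$, say with $\pi(\beta)=(s,\dots,s)$ of length $k$: then $\beta\leq\tau$ if and only if $k\leq n$ and $t_k\geq s$. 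Now $\sigma\in C$ and the uniformity of $\beta$ force either $k>m$ (so $s_k=0$) or $s_k<s$, and likewise $r_k<s$; in either case $t_k=\max(s_k,r_k)<s$, so $\beta\not\leq\tau$. Hence $\tau\in C$, the JEP holds, and Proposition~\ref{prop jep} gives atomicity.

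For the $\Rightarrow$ direction I argue the contrapositive. Assume $\beta\in B$ is non-uniform, write $\pi(\beta)=(s_1,\dots,s_k)$, and let $i$ be the largest index with $s_i>s_{i+1}$, so $b:=s_{i+1}=\dots=s_k<s_i$. Define $\sigma$ by $\pi(\sigma)=(s_1,\dots,s_i)$ (that is, $\beta$ with its trailing $b$-sized classes removed) and $\rho$ by $\pi(\rho)=(b,\dots,b)$ of length $k$. A direct check against $\leq_p$ shows $\sigma,\rho<\beta$, and since $B$ is an antichain this forces both $\sigma$ and $\rho$ into $C$. Now if some $\tau\in C$ with $\pi(\tau)=(t_1,\dots,t_n)$ embedded both, prefix domination would give $n\geq k$ together with $t_j\geq s_j$ for $j\leq i$ (from $\sigma$) and $t_j\geq b=s_j$ for $i<j\leq k$ (from $\rho$). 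These combine to $\pi(\beta)\leq_p\pi(\tau)$, forcing $\beta\leq\tau$ and contradicting $\tau\in C$; thus $\sigma,\rho$ have no common upper bound in $C$ and the JEP fails.

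The main creative step is the construction of the ``splitting pair'' $\sigma,\rho$ in the $\Rightarrow$ direction: they must be complementary enough that every common upper bound inevitably re-assembles $\beta$, yet each individually lies strictly below $\beta$ so that the antichain property of $B$ keeps them inside $C$. Once this pair is found, everything else is routine bookkeeping with prefix domination; the $\Leftarrow$ direction is essentially forced by the pointwise nature of $\leq_p$ combined with the uniformity hypothesis.
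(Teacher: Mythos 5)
Your proof is correct and follows essentially the same route as the paper's: both directions reduce everything to prefix domination via Lemma~\ref{lemma noncons cdn combined}, the joint-embedding witness is the pointwise maximum of the class-size sequences, and the failure of the JEP is witnessed by splitting a non-uniform basis element into two strictly smaller relations whose every common upper bound reassembles it. The only (immaterial) difference is the choice of splitting pair: you truncate after the last descent and flatten to the minimum class size, whereas the paper keeps the maximal classes as a uniform relation and shrinks those classes by one in the other half.
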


\begin{proof}
 ($\Rightarrow$) We prove the contrapositive, that if there is a non-uniform element $\sigma \in B$ then $C$ is not atomic.  Suppose such a non-uniform element $\sigma$ exists with equivalence classes $C_{1}, \dots, C_{m}$, listed in order of decreasing size.  Let $|C_{1}|=k>1$ and suppose that $\sigma$ has $n$ classes of size $k$, so $|C_{1}|= \dots = |C_{n}|=k$. Let $\alpha$ be a uniform equivalence relation consisting of $n$ classes $D_{1}, \dots, D_{n}$ of size $k$.  Let $\beta$ be an equivalence relation identical to $\sigma$ but with all classes of size $k$ replaced by classes of size $k-1$, and let the equivalence classes of $\beta$ be $E_{1}, \dots, E_{m}$ in decreasing size order.  Since $\alpha, \beta \lneq \sigma$ and $B$ is a basis, we have that $\alpha, \beta \in C$.
 
Suppose there is an equivalence relation $\theta \in C$ such that $\alpha, \beta \leq \theta$.  Say the equivalence classes of $\theta$ are $F_{1}, \dots, F_{l}$ in decreasing size order.   Since $\alpha \leq \theta$, by Lemma  \ref{lemma noncons cdn combined}, $|F_{i}| \geq |D_{i}| = |C_{i}|$ for $i=1, \dots, n$.  Similarly, since $\beta \leq \theta$, Lemma \ref{lemma noncons cdn combined} gives that $|F_{i}| \geq |E_{i}| = |C_{i}|$ for $i=n+1, \dots, m$.  Therefore $|F_{i}| \geq |C_{i}|$ for all $i$ and so $\sigma \leq \theta$ by Lemma \ref{lemma noncons cdn combined}, which is a contradiction, so $C$ does not satisfy the JEP and so is not atomic.

($\Leftarrow$) Now suppose that all the elements of $B=\{\rho_{1}, ..., \rho_{n}\}$ are uniform.  Take two relations $\alpha, \beta \in C$.  Suppose $\pi(\alpha)=(a_{1}, \dots, a_{k})$ and $\pi(\beta)=(b_{1}, \dots, b_{l})$.  Without loss of generality, assume $k\geq l$.  Let $\gamma$ be an equivalence relation with $\pi(\gamma)=(c_{1}, \dots, c_{k})$, where 
\begin{equation*}
  c_{i} =
    \begin{cases}
      \max\{a_{i}, b_{i}\}, & i \leq l\\
      a_{i}, & i>l\\
    \end{cases}       
\end{equation*}

Lemma \ref{lemma noncons cdn combined} immediately gives that $\alpha, \beta \leq \gamma$.  To give atomicity, we will show that $\gamma \in C$.  Aiming for a contradiction, suppose that $\rho_{j} \leq \gamma$ for some $j$.  Suppose $\pi(\rho_{j})=(p, p, \dots, p)$, with length $q$; by Lemma \ref{lemma noncons cdn combined}, we have that $p \leq c_{i}$ for $i=1, \dots q$.  Without loss of generality, suppose $c_{q}=a_{q}$.  Then, since $\pi(\alpha)$ is decreasing, for every $i=1, \dots, q$ we have that $p\leq c_{q}=a_{q}\leq a_{i}$.  This implies $\rho_{j} \leq \alpha$, a contradiction.  Hence, $\gamma \in C$ is an equivalence relation containing both $\alpha$ and $\beta$, so $C$ satisfies the JEP and therefore is atomic by Proposition \ref{prop jep}.
\end{proof}

\begin{thrm}
\label{decidability atomicity non cons}
It is decidable whether an avoidance set $C=\Av(B)$ of equivalence relations under the non-consecutive embedding order is atomic.
\end{thrm}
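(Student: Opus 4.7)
The plan is to reduce the decision problem to Theorem \ref{thrm atomicity non cons}, which characterises atomicity of $\Av(B)$ in terms of uniformity of the elements of its basis. Given an arbitrary finite input $B \subseteq \Eq$, I would first compute the basis $B'$ of $\Av(B)$, which is the antichain of minimal elements of $B$ under $\leq$. Indeed, by the discussion following Definition \ref{defn Av classes} the basis is the set of minimal elements of $\Eq \setminus \Av(B)$, and a quick argument shows that this coincides with the set of minimal elements of $B$ itself: any $b \in B$ minimal in $B$ is minimal in $\Eq \setminus \Av(B)$, because a strictly smaller element of $\Eq \setminus \Av(B)$ would force a strictly smaller element of $B$, contradicting minimality; conversely every minimal element of $\Eq \setminus \Av(B)$ must lie in $B$.

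To make this first step algorithmic, it suffices to observe that the relation $\leq$ on finite equivalence relations is itself decidable. By Lemma \ref{lemma noncons cdn combined}, $\sigma \leq \tau$ if and only if $\pi(\sigma) \leq_p \pi(\tau)$, which is a trivial coordinate-wise comparison of two finite sequences of natural numbers. Since $B$ is finite, we perform this test on each ordered pair from $B \times B$ and discard any $b \in B$ for which some strictly smaller $b' \in B$ exists; the survivors form $B'$.

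With $B'$ in hand, the second step is simply to check, for each $\rho \in B'$, whether $\rho$ is uniform, i.e.\ whether all entries of the sequence $\pi(\rho)$ are equal. This is a trivial syntactic verification. By Theorem \ref{thrm atomicity non cons}, $\Av(B) = \Av(B')$ is atomic if and only if every $\rho \in B'$ passes this check, which completes the decision procedure.

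Since Theorem \ref{thrm atomicity non cons} has already done the structural work, no serious obstacle remains. The only subtlety worth flagging is that one cannot simply test uniformity of the elements of the input $B$ directly: if $B$ contains a non-minimal, non-uniform relation $\tau$, then $\tau$ does not appear in the basis and its non-uniformity is irrelevant to atomicity. The preliminary pruning to minimal elements is therefore essential, and this is where all the computational content of the algorithm resides.
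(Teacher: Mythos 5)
Your proposal is correct and follows essentially the same route as the paper: reduce $B$ to the basis by discarding non-minimal elements, then test each surviving relation for uniformity and invoke Theorem \ref{thrm atomicity non cons}. The extra detail you supply (that the minimal elements of $B$ coincide with the basis of $\Av(B)$, and that $\leq$ is decidable via Lemma \ref{lemma noncons cdn combined}) is a correct elaboration of steps the paper leaves implicit.
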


\begin{proof}
Firstly, if $B$ is not a basis, we can reduce it to a basis by removing non-minimal elements.  Then it easy to check whether all the basis elements are uniform, meaning that the condition of Theorem \ref{thrm atomicity non cons} is decidable and hence atomicity is decidable for avoidance sets of equivalence relations under the non-consecutive embedding order.
\end{proof}

\section{Digraphs: definitions and some useful results}
\label{section digraphs}

In this section we state some necessary definitions related to digraphs and give two results from \cite{mr} which will be used in later sections. In the terminology of this paper, a digraph is a structure with a single binary relation. However, to help the intuition and visualisation we will use terminology more familiar from graph theory, which we introduce now.

\begin{defn}
\label{defn digraph}
A \emph{digraph} $G$ is a pair $(V, E)$, where $V$ is a set of \emph{vertices} and $E$ is a set of \emph{edges}, which are ordered pairs of vertices.  If $(u, v)\in E$, we say that $u$ and $v$ are \textit{neighbours} and that $u$ and $v$ are \emph{incident} to the edge $(u, v)$.
\end{defn}

\begin{defn}
\label{defn paths}
A \emph{path} in a digraph $(V, E)$ is an ordered sequence of vertices, written $v_{1} \rightarrow v_{2} \rightarrow \dots \rightarrow v_{n}$, where $(v_{i}, v_{i+1}) \in E$ for $i=1, \dots n-1$.  The \emph{length} of such a path is $n-1$.  The \emph{start vertex} and \emph{end vertex} are $v_{1}$ and $v_{n}$ respectively.  
A \emph{simple path} is a path whose vertices are all distinct.
 A \emph{cycle} is a path of length at least one with $v_{1}=v_{n}$. 
 A \emph{simple cycle} is a cycle $v_{1} \rightarrow v_{2} \rightarrow \dots \rightarrow v_{n}$ in which   
 $v_1,\dots,v_{n-1}$ are distinct.
\end{defn}

\begin{defn}
\label{defn concatenation}
Let  $\pi=v_{1} \rightarrow v_{2} \rightarrow \dots \rightarrow v_{n}$ and $\eta=u_{1} \rightarrow u_{2} \rightarrow \dots \rightarrow u_{k}$ be paths in a digraph and suppose $v_{n}=u_{1}$.  Then $\pi$ and $\eta$ can be \emph{concatenated} to produce a new path 
\begin{equation*}
\pi\eta=v_{1} \rightarrow v_{2} \rightarrow \dots \rightarrow v_{n} = u_{1} \rightarrow u_{2} \rightarrow \dots \rightarrow u_{k}.
\end{equation*}
If $\xi$ is a cycle, we will write the concatenation of $\xi$ with itself $m$ times 
as $\xi^{m}$.
\end{defn}

\begin{defn}
\label{defn in/out degree}
The \emph{in-degree} of a vertex $v$ in a digraph $G$ is the number of vertices $u$ such that $(u, v)$ is an edge in $G$.  Similarly, the \emph{out-degree} of $v$ is the number of vertices $u$ such that $(v, u)$ is an edge in $G$.
\end{defn}

\begin{defn}
\label{defn in-out cycle}
A cycle in a digraph is an \emph{in-cycle} if at least one vertex has in-degree two or more but all vertices have out-degree one.  A cycle is an \emph{out-cycle} if all vertices have in-degree one but at least one vertex has out-degree two or more.  A cycle is an \textit{in-out cycle} if it contains at least one vertex of in-degree two or more and at least one vertex of out-degree two or more.  
\end{defn}

\begin{defn}
\label{defn connected}
A digraph $G$ is \emph{strongly connected} if there is a path between any pair of vertices in $G$.
\end{defn}

\begin{defn}
\label{defn bicycle}
A digraph is a \emph{bicycle} if it consists of two disjoint simple cycles connected by a simple path, where only the start and end vertices of the path are in either cycle.  We refer to the first cycle as the \emph{initial cycle} and to the last cycle as the \emph{terminal cycle}.  Either of the cycles can be empty, and if one cycle is empty then the connecting path may be absent as as well.  However, if neither cycle is empty then the connecting path must have length at least one.  
\end{defn}

\begin{defn}
\label{defn subpath}
A \emph{subpath} of a path $v_{1} \rightarrow v_{2} \rightarrow \dots \rightarrow v_{n}$ is any path $v_{i} \rightarrow v_{i+1} \rightarrow \dots \rightarrow v_{k}$ with $1 \leq i\leq k \leq n$.
\end{defn}

\begin{defn}
\label{defn subgraph order}
Let $G$ be a digraph.  We define the \emph{subpath order} on the set of paths in $G$ as follows.  If $\pi, \eta$ are paths in $G$ then $\pi \leq \eta$ if and only if $\pi$ is a subpath of $\eta$.
\end{defn}

The set of paths in a digraph forms a poset under the subpath order.  The next two propositions from \cite{mr} give criteria for the poset of paths in a digraph to be well quasi-ordered and atomic.

\begin{prop}[{\cite[Theorem 3.1]{mr}}]
\label{prop digraphs wqo}
The poset of paths in a finite digraph $G$ under the subpath order is wqo if and only if $G$ contains no in-out cycles.\qed
\end{prop}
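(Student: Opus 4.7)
The plan is to treat the two directions separately. For the forward direction ($G$ contains an in-out cycle implies paths not wqo), I would construct an explicit infinite antichain. Write the in-out cycle as $C: c_1 \to c_2 \to \cdots \to c_k \to c_1$, and without loss of generality assume $c_1$ has in-degree $\geq 2$ and $c_j$ has out-degree $\geq 2$ in $G$, so there exist an in-edge $a \to c_1$ with $a \neq c_k$ and an out-edge $c_j \to b$ with $b \neq c_{j+1}$ (indices mod $k$). For each $n \geq 0$, let $\pi_n$ be the path that starts at $a$, proceeds to $c_1$, traverses $C$ fully $n$ times returning to $c_1$, then follows $C$ from $c_1$ to $c_j$, and finally leaves via $b$. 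The key observation is that the consecutive pair $(a, c_1)$ appears in $\pi_n$ only as the initial edge --- any other occurrence of $a$ in $\pi_n$ is at a cycle-position $c_i$ with $i \neq k$, which is followed by $c_{i+1} \neq c_1$ --- and similarly $(c_j, b)$ appears only as the final edge. So a subpath embedding $\pi_m \leq \pi_n$ is forced to span $\pi_n$ end-to-end, giving $\pi_m = \pi_n$ and thus $m = n$; hence $\{\pi_n : n \geq 0\}$ is an infinite antichain.

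For the reverse direction, assume $G$ contains no in-out cycles. I would first establish the structural claim that each strongly connected component (SCC) of $G$, viewed as a sub-digraph, is either trivial (a single vertex with no edges) or is a simple directed cycle (possibly a self-loop), and that any SCC which is itself a cycle must be a source, a sink, or isolated in the condensation DAG of $G$. The first part argues by contradiction via an edge-count: a strongly connected digraph with $n$ vertices and more than $n$ edges has a vertex of in-degree $\geq 2$ and a vertex of out-degree $\geq 2$, and one can use strong connectivity to produce a simple cycle of $G$ witnessing the in-out property. The second part is immediate, since a cyclic SCC with both external incoming and external outgoing edges has its defining cycle contain an in-degree-$\geq 2$ vertex and an out-degree-$\geq 2$ vertex in $G$, and so is itself an in-out cycle.

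It then follows that any path $\pi$ in $G$ visits a sequence of SCCs $S_1, \dots, S_k$ forming a walk in the (finite) condensation DAG, with each intermediate SCC $S_2, \dots, S_{k-1}$ a single vertex without a self-loop. Hence $\pi$ is determined by finitely many discrete parameters (the SCC-sequence, the inter-SCC transition edges, and the start and end vertices in $S_1$ and $S_k$) together with at most two natural-number ``lap-count'' parameters $l^{(1)}, l^{(k)}$ recording how many full cycles $\pi$ makes in $S_1$ and $S_k$, when those SCCs are cyclic (or carry a self-loop). Given any infinite sequence of paths, pigeonhole lets me pass to an infinite subsequence on which all discrete parameters agree; the remaining lap-count sequence lives in $\mathbb{N}^d$ for some $d \in \{0, 1, 2\}$, which is wqo by Corollary~\ref{cor to Higman}. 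Hence some $\pi_m$ precedes some $\pi_n$ coordinatewise in lap-counts, and a direct check shows that $\pi_m$ appears as a contiguous subsequence of $\pi_n$: skip the excess initial laps of $\pi_n$ in $S_1$ and trim the excess trailing laps in $S_k$. The divisibility of $l_n^{(1)} - l_m^{(1)}$ and $l_n^{(k)} - l_m^{(k)}$ by the respective cycle lengths ensures that the entry and end vertices align correctly. The main obstacle will be verifying this alignment step cleanly and dealing with the degenerate sub-cases ($k = 1$, self-loops at endpoint SCCs, and paths wholly contained in a single SCC).
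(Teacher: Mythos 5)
The paper does not actually prove this statement -- it is imported verbatim from \cite[Theorem 3.1]{mr} -- so there is no in-paper argument to compare against; your two-directional strategy (an explicit antichain threaded through the in-out cycle, and an SCC/lap-count decomposition plus a Dickson-type argument for the converse) is the natural one and is essentially the route taken in the cited source. Your reverse direction is sound as outlined: the structural claims about SCCs are correct (a strongly connected component with more edges than vertices yields an in-out cycle via strong connectivity, and a cyclic SCC with both an external in-edge and an external out-edge is itself an in-out cycle), and the alignment step does go through, because the prefix of $\pi_n$ you skip has length a multiple of the cycle length of $S_1$, so the resulting suffix starts at the same vertex as $\pi_m$ and thereafter the two paths are forced to coincide step by step, each SCC being a simple cycle with only one way to proceed inside it.

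The one genuine soft spot is in the forward direction. Definition \ref{defn paths} does not require cycles to be simple, and Definition \ref{defn in-out cycle} is stated for arbitrary cycles. Your key claim that $(a,c_1)$ occurs in $\pi_n$ only as the initial edge rests on ``$c_{i+1}\neq c_1$ for $i\neq k$'', which is precisely the assertion that $c_1$ is not a repeated vertex of the cycle. For a non-simple in-out cycle this can fail: if $c_1$ occurs a second time on the cycle, say $c_{i+1}=c_1$ with $i\neq k$, then the vertex $a$ you choose (only constrained to satisfy $a\neq c_k$) may well be $c_i$, in which case the pair $(a,c_1)$ recurs in every lap and the antichain argument collapses. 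The fix is a preliminary reduction: the existence of an in-out cycle implies the existence of a \emph{simple} one. Indeed, the vertices and edges of the given in-out cycle form a strongly connected subgraph $H$; take a simple cycle $D$ in $H$ through the vertex $u$ of in-degree $\geq 2$; if every vertex of $D$ had out-degree $1$ in $G$, then no path in $H$ could ever leave $D$, contradicting strong connectivity of $H$ unless the vertex $v$ of out-degree $\geq 2$ already lies on $D$ -- either way $D$ is a simple in-out cycle. With that reduction in place, your construction and the end-to-end forcing argument are correct.
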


\begin{prop}[{\cite[Theorem 2.1]{mr}}]
\label{prop digraphs atomic}
The poset of paths in a digraph $G$ under the subpath order is atomic if and only if $G$ is strongly connected or a bicycle.\qed
\end{prop}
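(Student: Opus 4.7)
I would split the proof into the two directions of the biconditional, proving the ``if'' direction by direct verification of the joint embedding property (which suffices by Proposition~\ref{prop jep}) and the ``only if'' direction by contrapositive.

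For the forward direction, first suppose $G$ is strongly connected. Given any two paths $\pi$ and $\eta$, strong connectivity provides a path $\mu$ from the end-vertex of $\pi$ to the start-vertex of $\eta$, so the concatenation $\pi\mu\eta$ is a single path containing both as subpaths. Next suppose $G$ is a bicycle with initial cycle $C_1$, connecting path $P$ and terminal cycle $C_2$. The key structural observation is that every path in $G$ decomposes canonically into a (possibly empty) segment lying in $C_1$, a (possibly empty) segment of $P$, and a (possibly empty) segment lying in $C_2$, since no edge returns from $P\cup C_2$ back into $C_1$ and no edge leaves $C_2$. Given two paths $\pi$ and $\eta$, one then builds a common supersequence by iterating $C_1$ enough times to accommodate the $C_1$-portions of both, traversing all of $P$, and then iterating $C_2$ enough times to accommodate the $C_2$-portions; the degenerate subcases (empty cycles, absent $P$, paths that avoid $P$ entirely) are handled in the same spirit.

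For the reverse direction I would analyse the condensation $\hat{G}$ of $G$ --- the acyclic digraph of strongly connected components --- together with the internal structure of each SCC. Since $G$ is not strongly connected, $\hat{G}$ has at least two vertices, and since $G$ is not a bicycle, one of several structural defects must be present: $\hat{G}$ fails to be a simple linear chain (there is a branch, a merge, or multiple edges between two consecutive SCCs), some intermediate SCC is non-trivial (multi-vertex or carries a self-loop), or some endpoint SCC is strongly connected but not a simple cycle. In each such case the plan is to construct two paths in $G$ that must both cross a common ``one-way'' bottleneck --- a unique inter-SCC edge, a branching or merging vertex in $\hat{G}$, or a unique entry/exit of a non-trivial cycle --- but which approach or continue past the bottleneck in incompatible ways; any common supersequence would then have to use the bottleneck twice, which is impossible since once the path has moved on to a later SCC it cannot return. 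The main obstacle lies precisely in this case analysis: because JEP holds without effort inside a single strongly connected component, every genuine obstruction to JEP must be located either at the interfaces between SCCs or in the interaction of a non-trivial internal cycle with its unique entry/exit edges, and producing a tailored pair of obstruction paths for each failure mode requires care.
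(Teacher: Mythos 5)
You should first be aware that the paper does not prove this statement at all: it is imported verbatim from \cite{mr} (Theorem 2.1 there) and stated with a citation and no proof, so there is no internal argument to compare yours against; what follows measures your proposal against the mathematics itself. Your ``if'' direction is correct and essentially complete. Concatenation handles the strongly connected case, and for a bicycle every path is a subpath of the canonical path that starts at the junction of the initial cycle with the connecting path, winds around the initial cycle $N$ times, traverses the connecting path, and winds around the terminal cycle $M$ times; taking $N,M$ large enough to dominate both given paths yields the JEP, and the degenerate subcases are genuinely routine.

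The ``only if'' direction, however, is a plan rather than a proof, and the step you yourself flag as requiring care is exactly where the content lies. Two concrete points. First, the mechanism you state --- ``any common supersequence would have to use the bottleneck twice'' --- needs to be sharpened before it yields contradictions. Take the instructive case of a chain of three simple cycles (condensation a linear chain, middle SCC non-trivial). Let $e_{\mathrm{in}}$ and $e_{\mathrm{out}}$ be the unique edges entering and leaving the middle cycle, of length $q$, and let $d$ be the distance along that cycle from the head of $e_{\mathrm{in}}$ to the tail of $e_{\mathrm{out}}$. Each of $e_{\mathrm{in}},e_{\mathrm{out}}$ is traversed at most once by any path, so in a putative common superpath the segment strictly between their unique traversals is a single fixed walk; but the two paths ``$e_{\mathrm{in}}$, then $d$ steps around the middle cycle, then $e_{\mathrm{out}}$'' and ``$e_{\mathrm{in}}$, then $d+q$ steps, then $e_{\mathrm{out}}$'' force that segment to have two different lengths. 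The contradiction is not that a bottleneck is used twice, but that the unique segment it pins down is overdetermined; you need this ``two distinct walks between two one-way edges'' (or ``two walks ending at a one-way edge, neither a suffix of the other'') template spelled out, and a tailored pair exhibited for each failure mode. Second, your enumeration of failure modes must be checked for exhaustiveness against the bicycle definition: a linear-chain condensation can still fail to give a bicycle because of parallel inter-SCC edges, a shortcut edge between non-consecutive SCCs, a non-endpoint SCC containing a cycle, an endpoint SCC that is strongly connected but not a simple cycle, or an endpoint cycle attached to the connecting structure by more than one edge. Each of these does succumb to the bottleneck template, but until the list is proved complete and each obstruction pair is written down, the converse direction remains open.
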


\section{The Factor Graph of an Avoidance Set}
\label{section factor graph}
We have already seen that the poset of equivalence relations under the non-consecutive embedding order is wqo.  Now we look at the consecutive embedding order, and from now on all avoidance sets will be under this order so it will be written as $\leq$, rather than $\lecons$, and isomorphisms will be written $\cong$, rather than $\congcons$.

In Example \ref{ex equiv relns not wqo cons} we saw that the poset of equivalence relations under the consecutive embedding order is not wqo, so now we are working towards showing decidability of wqo for avoidance sets. This section introduces the equivalence relation factor graph of an avoidance set $C$ and explores the relationship between the poset of paths in this graph and $C$.  The ideas from this section will then be applied in Section \ref{section wqo unbounded} towards showing decidability of wqo for avoidance sets in general, and in Section \ref{section atomicity cons} to establish decidability of atomicity.

Since we will be working under the consecutive embedding order, recall that all equivalence relations are equipped with a linear order -- the natural order on $\mathbb{N}$.  Given $(X, \rho)$, if $S \subseteq X$, we will denote the restriction of $\rho$ to points in $S$ by $\rho \Harpoon{S}$.  It can be seen that the restriction of $(X, \rho)$ to $S$ yields a consecutive sub-equivalence relation, and any consecutive sub-equivalence relation of $(X, \rho)$ can be expressed as a restriction of $\rho$ to a subset of $X$.  We will write $\rho\down$ to denote the equivalence relation obtained from $\rho$ by changing the smallest element into a 1, the second smallest into a 2, and so on.  In other words, $\rho\down$ is the unique equivalence relation isomorphic to $\rho$ whose underlying set is $[1, |\rho|]$.  For example, if $\rho=\lvertb0\lvertm2\h 3\h 6\hspace{1mm} 11\lvertm4\h5\hspace{1mm}50\lverte$ and $S=\{3, 4, 5, 6\}$ then $\rho \Harpoon{S}=\lvertb3\h6\lvertm 4\h 5\lverte$ and $\rho\down=\lvertb1\lvertm2\h 3\h 6\h 7\lvertm4\h 5\h8\lverte$.  

In what follows, $B$ will be a finite set of equivalence relations, $C=\Av(B)$, and $b=\max\{|\rho|:\rho \in B\}$.  Note that we are not assuming that $B$ is necessarily the basis for $C$ (i.e. that it is an antichain).  However, if $B$ is not a basis it can easily be reduced to one by removing the non-minimal elements.  If $S\subseteq \mathbb{N}$ then $C_{S}$ will denote the set $\{\sigma \in C: |\sigma| \in S\}$.  
 
We begin with the following easy observation, which relates wqo in an avoidance set $C$ to wqo in the subset $C_{[b, \infty)}$.
 
\begin{lemma}
\label{lemma Cb and C}
A finitely based avoidance set $C$ is wqo if and only if $C_{[b, \infty)}$ is wqo.
\end{lemma}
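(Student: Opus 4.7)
The plan is to observe that the set $C\setminus C_{[b,\infty)}$ is precisely $C_{[1,b-1]}$, the set of all members of $C$ of length strictly less than $b$, and then show that this "missing" part is finite. Once finiteness is established, the result follows immediately from Lemma \ref{lemma prelims wqo}\ref{lemma finite set and wqo set 2}, which tells us that removing (or equivalently, adjoining) a finite subset does not affect the wqo property.

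For the finiteness of $C_{[1,b-1]}$, I would simply note that for each fixed $n\in\mathbb{N}$ there are only finitely many equivalence relations with underlying set $[1,n]$ (the number is the $n$th Bell number), so there are only finitely many equivalence relations of length at most $b-1$; restricting to those lying in $C$ can only shrink this finite set further. With this in hand, the proof is essentially immediate: apply Lemma \ref{lemma prelims wqo}\ref{lemma finite set and wqo set 2} with $X=C$ and $Y=C_{[1,b-1]}$ to conclude $C$ is wqo iff $C\setminus Y=C_{[b,\infty)}$ is wqo.

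There is no significant obstacle here; the lemma is a routine bookkeeping step that will later allow us to restrict attention to equivalence relations of sufficient length, where each element of $C$ can be safely "broken up" into windows of length $b$ for the purposes of encoding via paths in a digraph. In writing up the proof I would just state the decomposition $C = C_{[1,b-1]} \sqcup C_{[b,\infty)}$, justify finiteness of the first piece, and cite Lemma \ref{lemma prelims wqo}\ref{lemma finite set and wqo set 2}; no induction or case analysis is required.
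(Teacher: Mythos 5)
Your proof is correct and takes exactly the same route as the paper: the paper's proof is a one-line application of Lemma \ref{lemma prelims wqo}\ref{lemma finite set and wqo set 2} with $X=C$ and $Y=C_{[1,b-1]}$. Your explicit justification that $C_{[1,b-1]}$ is finite (via the Bell numbers) is a detail the paper leaves implicit, but it is the right and only thing to check.
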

 
\begin{proof}
This follows immediately from Lemma \ref{lemma prelims wqo} \ref{lemma finite set and wqo set 2}, taking $X=C$ and $Y=C_{[1, b-1]}$.
\end{proof}
 
In \cite{mr} de Bruijn graphs are used to show decidability of wqo and atomicity for avoidance sets of words under the contiguous subword ordering.  Furthermore, certain modifications of de Bruijn graphs are used to show decidability of wqo and atomicity for avoidance sets of permutations under the contiguous subpermutation ordering.  Similarly, now we will introduce the \emph{equivalence relation factor graph}, another modification of de Bruijn graphs, which we use to tackle the wqo and atomicity problems for our poset of equivalence relations.  

We define $\overline{\Eq}_{b}$ to be the set of equivalence relations on the set $[1, b]$.  
We define $G_{b}$ to be the digraph with vertex set $\overline{\Eq}_{b}$ and an edge from vertex $\mu$ to vertex $\nu$ if and only if $\mu\Harpoon{[2, b]}\cong \nu\Harpoon{[1, b-1]}$.  We will define the \textit{equivalence relation factor graph} $\Gamma_{B}$ of an avoidance set $C=\Av(B)$ to be the induced subgraph of $G_{b}$ with vertex set $C_{b}=C \cap \overline{\Eq}_{b}$.  From now on we will use the shortening \emph{factor graph} to refer to the equivalence relation factor graph.  Now we give two examples of factor graphs.

\begin{ex}
\label{ex factor graph 1}
Let $B=\{\lvertb1\h2\lvertm3\lverte\}$ and consider $C=\Av(B)$.  The factor graph of this avoidance set is shown in Figure \ref{Av(1,2 b 3) figure}.  Since the maximum length of an element of $B$ is $b=3$, the vertices are all equivalence relations on $3$ points in $C$:  
\begin{equation*}
\lvertb1\lvertm2\lvertm3\lverte, \hspace{3mm} \lvertb1 \h 3\lvertm2\lverte \hspace{3mm}, \lvertb1\lvertm2 \h 3\lverte, \hspace{3mm} \lvertb1 \h 2 \h 3\lverte.
\end{equation*}
There is an edge from $\sigma = \lvertb1\lvertm2\lvertm3\lverte$ to $\rho = \lvertb1 \h 3\lvertm2\lverte$ because 
\begin{equation*}
\sigma \Harpoon{\{2, 3\}} = \lvertb 2 \lvertm 3 \lverte \hspace{2mm} \cong \lvertb 1 \lvertm 2 \lverte = \rho \Harpoon{\{1, 2\}}.
\end{equation*}
\end{ex}

\begin{figure}
\begin{tikzpicture}[> =  {Stealth [scale=1.3]}, thick]
\tikzstyle{everystate} = [thick]
\node [state, shape = ellipse, minimum size = 20pt] (1b2b3) at (0,0) {\small{$\lvertb1\lvertm2\lvertm3\lverte$}};
\node [state, shape = ellipse, minimum size = 20pt] (13b2) at (3, 0) {\small{$\lvertb1\h 3\lvertm2\lverte$}};
\node [state, shape = ellipse, minimum size = 20pt]  (1b23) at (0,-3) {\small{$\lvertb1\lvertm2 \h 3\lverte$}};
\node [state, shape = ellipse, minimum size = 20pt] (123) at (3, -3) {\small{$\lvertb1\h  2 \h 3\lverte$}};

\path[->]
(1b2b3) edge [bend left] node {} (13b2)
(13b2) edge [bend left] node {} (1b2b3)
(1b2b3) edge node {} (1b23)
(13b2) edge node {} (1b23)
(1b23) edge node {} (123)
(1b2b3) edge [loop left] node {} (1b2b3)
(13b2) edge [loop right] node {} (13b2)
(123) edge [loop right] node {} (123)
;
\end{tikzpicture}
\caption{The factor graph of $\Av(\lvertb1 \h 2\lvertm 3 \lverte)$.}
\label{Av(1,2 b 3) figure}
\end{figure}

\begin{ex}
\label{ex factor graph 2}
Let $B=\overline{\Eq}_{4} \backslash X$, where
\begin{equation*}
X=\{ \lvertb1 \h 2 \h 3 \h 4\lverte ,\ \lvertb1 \h 2 \h 3\lvertm4\lverte ,\ \lvertb1\lvertm2 \h 4\lvertm 3 \lverte,\ \lvertb 1 \h 3 \h 4\lvertm2 \lverte,\  \lvertb1 \lvertm2 \h 3 \lvertm 4\lverte ,\ \lvertb1 \h 2 \h 4\lvertm3\lverte\}
\end{equation*}
and consider the avoidance class $C=\Av(B)$.  The factor graph $\Gamma_{B}$ of $C$ is shown in Figure \ref{ex 4.7 figure}.
\end{ex}

\begin{figure}
\begin{tikzpicture}[> =  {Stealth [scale=1.3]}, thick]
\tikzstyle{everystate} = [thick]
\node [state, shape = ellipse, minimum size = 20pt] (1234) at (0,0) {\small{$\lvertb1 \h 2 \h 3 \h 4\lverte$}};
\node [state, shape = ellipse, minimum size = 20pt] (123b4) at (4, 0) {\small{$\lvertb1 \h 2 \h 3\lvertm4\lverte$}};
\node [state, shape = ellipse, minimum size = 20pt]  (1b24b3) at (0,-2) {\small{$\lvertb1\lvertm2 \h 4\lvertm3\lverte$}};
\node [state, shape = ellipse, minimum size = 20pt] (134b2) at (4, -2) {\small{$\lvertb1 \h 3 \h 4\lvertm2\lverte$}};
\node [state, shape = ellipse, minimum size = 20pt] (1b23b4) at (8, -2) {\small{$\lvertb1\lvertm2 \h 3\lvertm4\lverte$}};
\node [state, shape = ellipse, minimum size = 20pt] (124b3) at (8, 0) {\small{$\lvertb1 \h 2 \h 4\lvertm3\lverte$}};

\path[->]
(1234) edge node {} (123b4)
(123b4) edge node {} (124b3)
(1b24b3) edge node {} (134b2)
(134b2) edge node {} (1b23b4)
(1b23b4) edge node {} (124b3)
(1234) edge [loop left] node {} (1234)
(124b3) edge node {} (134b2)
;
\end{tikzpicture}
\caption{The factor graph of $\Av(B)$ from Example \ref{ex factor graph 2}.}
\label{ex 4.7 figure}
\end{figure}

Now we describe the relationship between elements in $C_{[b, \infty)}$ and paths in $\Gamma_{B}$.  Let $\rho\in C_{[b, \infty)}$ and without loss of generality assume $\rho$ has underlying set $[1, n]$.  We associate $\rho$ with the path $\Pi(\rho)$ in $\Gamma_{B}$ given by
\begin{equation}
\label{eq:Pirho}
\rho \Harpoon{[1, b]} \rightarrow \rho \Harpoon{[2, b+1]}\downarrow \rightarrow \dots \rightarrow \rho \Harpoon{[n-b+1, n]}\downarrow.
\end{equation}

On the other hand, if $\pi=\mu_{1} \rightarrow \mu_{2}\rightarrow ...\rightarrow \mu_{k}$ is a path in $\Gamma_{B}$, we associate it with the set of equivalence relations  
\begin{equation*}
\Sigma(\pi)=\{\rho\in C_{[b, \infty)} : \Pi(\rho)=\pi\}.
\end{equation*}
Note that while every element of $C_{[b, \infty)}$ is associated with a unique path in $\Gamma_{B}$, a path in $\Gamma_{B}$ may be associated with several equivalence relations in $C_{[b, \infty)}$.  The following properties follow directly from the definitions.

\begin{prop}
\label{prop properties of paths and relations}
\begin{thmenumerate}
\item 
\label{reln in in relns of path of itself}
If $\rho \in C_{[b,\infty)}$, then $\rho \in \Sigma(\Pi(\rho))$.
\item 
\label{path is path of reln of path}
If $\pi$ is a path in $\Gamma_{B}$ and $\sigma \in \Sigma(\pi)$, then $\Pi(\sigma)=\pi$.\qed
\end{thmenumerate}
\end{prop}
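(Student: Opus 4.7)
The plan is to observe that both parts of the proposition fall out essentially by unfolding the definitions of $\Pi$ and $\Sigma$; the only mild bookkeeping concerns showing that the sequence in~\eqref{eq:Pirho} is genuinely a path in $\Gamma_B$ rather than merely a sequence of elements of $\overline{\Eq}_b$.

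For part~\ref{reln in in relns of path of itself}, let $\rho \in C_{[b,\infty)}$ have underlying set $[1,n]$. First I would check that each vertex appearing in $\Pi(\rho)$ lies in $C_b$: the restriction $\rho\Harpoon{[i,i+b-1]}\downarrow$ is a consecutive sub-equivalence relation of $\rho$ (up to the canonical relabelling), and since $C$ is downward closed under $\lecons$, it belongs to $C$; it visibly has length $b$, so it is a vertex of $\Gamma_B$. Next I would verify the edge condition between successive vertices. Setting $\mu_i := \rho\Harpoon{[i,i+b-1]}\downarrow$, the restriction $\mu_i \Harpoon{[2,b]}$ and $\mu_{i+1}\Harpoon{[1,b-1]}$ both describe, up to relabelling, the restriction $\rho\Harpoon{[i+1,i+b-1]}$, hence are isomorphic, which is precisely the edge condition in $G_b$ (and therefore in the induced subgraph $\Gamma_B$). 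Consequently $\Pi(\rho)$ is a well-defined path in $\Gamma_B$, and the conclusion $\rho \in \Sigma(\Pi(\rho))$ is then immediate from the definition $\Sigma(\pi) = \{\sigma \in C_{[b,\infty)} : \Pi(\sigma) = \pi\}$ applied with $\pi = \Pi(\rho)$.

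For part~\ref{path is path of reln of path}, there is nothing to prove beyond rereading the definition: membership $\sigma \in \Sigma(\pi)$ is defined to mean $\sigma \in C_{[b,\infty)}$ and $\Pi(\sigma) = \pi$.

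There is no real obstacle here; the entire content of the proposition is a consistency check ensuring that $\Pi$ and $\Sigma$ are well-defined and behave as expected. The only step requiring even a sentence of justification is the assertion that $\Pi(\rho)$ is a path in $\Gamma_B$, which reduces to downward closure of $C$ together with the observation that two overlapping length-$b$ windows of $\rho$ share a common length-$(b-1)$ suffix/prefix.
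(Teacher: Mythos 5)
Your proposal is correct and matches the paper, which gives no explicit proof and simply asserts that both parts ``follow directly from the definitions''; your unfolding (downward closure of $C$ puts each window in $C_b$, overlapping windows give the edge condition, and part (ii) is the definition of $\Sigma$) is exactly the intended justification.
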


It will be possible to show a close relationship between the subpath order on $\Gamma_{B}$ and the consecutive embedding order on $C=\Av(B)$ which will be key in showing decidability of well quasi-order and atomicity for $C$.  Sometimes we will refer to wqo of the poset of paths in a graph $G$ under the subgraph order simply as wqo of $G$.

\begin{prop}
\label{prop subrelns give rise to subpaths}
If $\sigma, \rho \in C_{[b, \infty)}$ and $\sigma \leq \rho$, then $\Pi(\sigma) \leq \Pi(\rho)$ under the subpath order in $\Gamma_{B}$.
\end{prop}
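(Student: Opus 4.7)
The plan is straightforward: unpack the definition of consecutive embedding, identify the obvious candidate subpath of $\Pi(\rho)$, and verify vertex-by-vertex that it coincides with $\Pi(\sigma)$.

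First I would set notation. Assume $\sigma$ has underlying set $[1,m]$ and $\rho$ has underlying set $[1,n]$, with $m,n\geq b$. The hypothesis $\sigma\leq\rho$ means there is a consecutive embedding $f:[1,m]\to[1,n]$, so $f(i)=k+i-1$ for some $k\in[1,n-m+1]$. The key single observation is then that $\sigma\cong\rho\Harpoon{[k,k+m-1]}$ (as equivalence relations on ordered sets), via the shift $j\mapsto k+j-1$.

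Next I would write down the candidate subpath. The path $\Pi(\rho)$ has $n-b+1$ vertices indexed by windows $[j,j+b-1]$ for $j=1,\dots,n-b+1$; I claim
\[
\rho\Harpoon{[k,k+b-1]}\down\;\to\;\rho\Harpoon{[k+1,k+b]}\down\;\to\;\cdots\;\to\;\rho\Harpoon{[k+m-b,k+m-1]}\down
\]
is exactly $\Pi(\sigma)$. This is a genuine subpath of $\Pi(\rho)$ because it is the contiguous block of vertices from position $k$ to position $k+m-b$, and it has the right length, namely $m-b+1$, which matches the number of vertices of $\Pi(\sigma)$ from its definition in equation~(\ref{eq:Pirho}).

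Finally I would verify that the $i$-th vertex of my candidate subpath equals the $i$-th vertex of $\Pi(\sigma)$ for every $i\in[1,m-b+1]$. Concretely, I need
\[
\sigma\Harpoon{[i,i+b-1]}\down\;\cong\;\rho\Harpoon{[k+i-1,k+i+b-2]}\down.
\]
But this is immediate from the shift isomorphism $\sigma\cong\rho\Harpoon{[k,k+m-1]}$: restriction to a consecutive window commutes with this shift, so $\sigma\Harpoon{[i,i+b-1]}\cong\rho\Harpoon{[k+i-1,k+i+b-2]}$, and taking $\down$ on both sides yields the canonical representative in $\overline{\Eq}_{b}$, hence equality of vertices of $\Gamma_B$.

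I do not expect a real obstacle here; the whole content is that consecutive embedding shifts consecutive windows to consecutive windows and preserves the induced equivalence structure, which is built into the definition of $\lecons$. The only mild care is to note that the resulting sequence really is a subpath of $\Pi(\rho)$ rather than merely a sequence of its vertices, but this is true by construction since the windows $[k+i-1,k+i+b-2]$ for $i=1,\dots,m-b+1$ are precisely a consecutive segment of the windows used to build $\Pi(\rho)$.
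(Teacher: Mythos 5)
Your proposal is correct and follows essentially the same route as the paper's own (much terser) proof: the paper simply observes that for a contiguous subset $S$ of $[1,|\rho|]$ the path $\Pi(\rho\Harpoon{S})$ is a subpath of $\Pi(\rho)$ and that $\sigma$ sits on such a contiguous subset. You have merely spelled out the window-by-window verification that the paper leaves implicit.
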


\begin{proof}
From the definition \eqref{eq:Pirho} of $\Pi(\rho)$, we have that for a contiguous subset $S\subseteq [1,|\rho|]$ 
the path $\Pi(\rho\Harpoon{S})$ is a subpath of $\Pi(\rho)$, and since $\sigma$ is on a contiguous subset of $[1,|\rho|]$, the result follows. 
\end{proof}

We will identify when every path in $\Gamma_{B}$ has a unique associated equivalence relation. It turns out that in this case the converse of Proposition \ref{prop subrelns give rise to subpaths} is also true,  
and the wqo problem for $C$ is  reduced to that of wqo for $\Gamma_{B}$ under the subpath order, which we know is decidable by Proposition \ref{prop digraphs wqo}.
 We will show that this will be true if and only if $\Gamma_{B}$ does not contain some particular vertices, called \emph{ambiguous vertices}.  Then it will remain to tackle the question of wqo separately for factor graphs containing ambiguous vertices.  We will use similar methods to tackle the atomicity problem in Section \ref{section atomicity cons}.  

Given a path $\pi$ in the factor graph $\Gamma_{B}$, we can think of constructing an associated equivalence relation $\sigma \in \Sigma(\pi)$ by reading the vertices in order and adding to $\sigma$ the entries 1 to $n$ so that each vertex of $\pi$ is a consecutive sub-equivalence relation of $\sigma$.  In this way, vertices can be thought of as giving instructions to place the next entry into a particular equivalence class.  If $\pi$ is a path in a factor graph and $|\Sigma(\pi)|>1$, there must be at least one vertex in $\pi$ that gives more than one option for the position of the next entry of an equivalence relation in $\Sigma(\pi)$.  Any such vertex must have its largest entry $b$ in a class of size one, otherwise the class of this entry would be uniquely determined by the classes of previous entries. This informs the next definition.

\begin{defn}
\label{defn special vertex}
A vertex in a factor graph is a \textit{special vertex} if the largest entry is in a class of size one.  
\end{defn}

\begin{ex}
\label{ex special vertices}
We return to the factor graph of $C=\Av(\lvertb1\h2\lvertm3\lverte)$, shown in  Figure \ref{Av(1,2 b 3) figure}; the vertex labeled $\lvertb1\lvertm2\lvertm3\lverte$ is special since 3 is in a class of size one, but the vertex $\lvertb1\lvertm2\h 3\lverte$ is not special since 3 is in a class of size two. 
\end{ex}

We will now give an example showing that some, but not all, special vertices give rise to a choice for the next entry of some associated equivalence relations.

\begin{ex}
\label{ex choices at some vertices}
Consider again the avoidance set $C=\Av(B)$ from Example \ref{ex factor graph 2}, where $B=\overline{\Eq}_{4} \backslash X$ and
\begin{equation*}
X=\{ \lvertb1 \h 2 \h 3 \h 4\lverte ,\ \lvertb1 \h 2 \h 3\lvertm4\lverte ,\ \lvertb1\lvertm2 \h 4\lvertm 3 \lverte,\ \lvertb 1 \h 3 \h 4\lvertm2 \lverte,\ \lvertb1 \lvertm2 \h 3 \lvertm 4\lverte ,\ \lvertb1 \h 2 \h 4\lvertm3\lverte\}.
\end{equation*}
The factor graph $\Gamma_{B}$ of $C$ is shown in Figure \ref{ex 4.7 figure}.  It can be seen that $\lvertb1 \h 2 \h 3 \lvertm 4\lverte$ and $\lvertb1 \lvertm2 \h 3\lvertm4\lverte$ are the only special vertices.  The equivalence relations associated with paths ending at $\lvertb1 \h 2 \h 3\lvertm4\lverte$ only have one class, so it is not possible to add the new entry to an existing class at this vertex.  This forces a new class to be added at $\lvertb1 \h 2 \h 3\lvertm 4\lverte$, meaning that $\lvertb1 \h 2 \h 3\lvertm4\lverte$ is a special vertex which offers no choice in the position of the new entry.  On the other hand, the vertex $\lvertb1\lvertm2 \h 3\lvertm4\lverte$ can give a choice in the position of the next entry of an associated equivalence relation, for example the associated equivalence relations of the path 
 \begin{equation*}
\lvertb1\lvertm2 \h 4\lvertm3\lverte \rightarrow \lvertb1 \h 3 \h 4\lvertm2\lverte \rightarrow \lvertb1\lvertm2 \h 3\lvertm4\lverte
\end{equation*}
 include both $\lvertb1 \h6\lvertm2 \h 4 \h 5\lvertm 3\lverte$ and $\lvertb1\lvertm2 \h 4 \h 5\lvertm3\lvertm6\lverte$.  
\end{ex}

We will now address this distinction in special vertices, introducing ambiguous vertices as those which give a choice in the position of the next entry of at least one associated equivalence relation.  

\begin{defn}
\label{defn inactive class}
Suppose $C=\Av(B)$ is an avoidance set, $b$ is the maximum length of an element in $B$ and $\sigma \in C_{[b, \infty)}$.  A class of $\sigma$ which does not contain any of the largest $b-1$ elements in $\sigma$ will be referred to as an \emph{inactive class}.
\end{defn}

\begin{ex}
\label{ex inactive class}
Consider the equivalence relation 
\begin{equation*}
\sigma=\lvertb1 \h 5\lvertm 2 \h 3\lvertm 4 \h 6 \h 7\lverte \in \Av(\lvertb1\lvertm2\lvertm3\lvertm4\lverte).
\end{equation*}
Here $b=4$ and the largest three elements of $\sigma$ are $7, 6$ and $5$.  The class $\{2, 3\}$ does not contain any of these elements so is an inactive class.  The class $\{4, 6, 7\}$ contains both 6 and 7, so this is not an inactive class.
\end{ex}

\begin{defn}
\label{defn ambiguous vertex}
A special vertex $\nu$ is \textit{ambiguous} if there is a vertex $\mu$ such that $(\mu, \nu)$ is an edge and there exists an equivalence relation $\sigma$ such that $\sigma$ has an inactive class and $\Pi(\sigma)$ ends at $\mu$.
\end{defn}

\begin{ex}
\label{ex ambiguous vertices}
\label{ex ambiguous vertices item 1}
Following the discussion in Example \ref{ex choices at some vertices}, the vertex $\lvertb1 \h 2 \h 3\lvertm4\lverte$ in Figure \ref{ex 4.7 figure} is not ambiguous since any relation associated with a path ending at $\lvertb 1 \h 2\h 3 \h 4 \lverte$ can only have one class, so cannot have any inactive classes.  On the other hand, the vertex $\lvertb1\lvertm2 \h 3\lvertm4\lverte$ is ambiguous since the relation $\lvertb 1 \lvertm 2 \h 4 \h 5 \lvertm 3 \lverte$ with associated path $\lvertb 1 \lvertm 2 \h 4 \lvertm 3 \lverte \rightarrow \lvertb 1 \h 3 \h 4 \lvertm 2 \lverte$ has an inactive class.
\end{ex}

\begin{prop} 
\label{prop special vertex in cycle}
Special vertices in cycles are always ambiguous.
\end{prop}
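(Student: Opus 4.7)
The plan is to show, for any special vertex $\nu$ lying on a cycle of $\Gamma_B$, that there is a predecessor $\mu$ of $\nu$ and an equivalence relation $\sigma \in C$ whose associated path $\Pi(\sigma)$ ends at $\mu$ and which carries an inactive class; this verifies Definition~\ref{defn ambiguous vertex} for $\nu$. The strategy is to traverse the cycle through $\nu$ many times and read off a suitable $\sigma$ from the resulting path.

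First I would fix a closed walk $\nu = u_0 \to u_1 \to \dots \to u_{k-1} \to u_k = \nu$ in $\Gamma_B$ of positive length $k$, set $\mu := u_{k-1}$, and form the path $\pi$ obtained by concatenating $j$ copies of $u_0 \to u_1 \to \dots \to u_{k-1}$ (glued by the edge $(u_{k-1},\nu)$ between consecutive copies). Then $\pi$ has $jk$ vertices, starts at $\nu$, ends at $\mu$, and visits $\nu$ exactly $j$ times; I will eventually set $j = b$.

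Next I would build $\sigma \in \Sigma(\pi)$ explicitly: initialise $\sigma \Harpoon{[1,b]} := \nu$, and then, edge by edge along $\pi$, adjoin one new element to $\sigma$. At a non-special vertex the class of the new element is forced by the vertex (it must join the unique class in the current window corresponding to the top position); at a special vertex I always place the new element into a brand-new singleton class. By construction $\Pi(\sigma) = \pi$; and $\sigma$ really lies in $C$, since every length-$b$ window of $\sigma$ is a vertex of $\Gamma_B \subseteq C$, so any consecutive copy of some $\tau \in B$ inside $\sigma$ would have length $\leq b$ and hence fit inside such a window, contradicting that the window avoids $B$.

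The crux of the argument is then a count of classes. Classes of $\sigma$ are created either at the outset (the $c_\nu \geq 1$ classes of $\nu$) or at a special-vertex step along $\pi$ (each producing a brand-new singleton), and in this construction distinct classes never merge, since each step either grows one existing class by a single element or births a new one. Because $\nu$ is special, each of the $j - 1$ revisits of $\nu$ in $\pi$ spawns a fresh class, giving at least $c_\nu + (j - 1) \geq j = b$ classes in total. On the other hand, the largest $b - 1$ elements of $\sigma$ can inhabit at most $b - 1$ classes, so pigeonhole produces at least one class containing none of them, which is precisely an inactive class. The one subtlety to flag is self-consistency of the construction, namely that our per-step extensions really do produce an element of $\Sigma(\pi)$; this is immediate because the forced extensions at non-special vertices match exactly the structure dictated by the edge relation, and the free choice "start a new class" at a special vertex is always compatible with any consistent extension.
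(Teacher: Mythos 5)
Your proof is correct and follows essentially the same strategy as the paper's: wind around the cycle through $\nu$ enough times, opening a new class at each visit to the special vertex, and then count classes to force the existence of an inactive one in a relation whose path ends at the predecessor $\mu$. The only (harmless) difference is that you traverse the cycle $b$ times and pigeonhole against the $b-1$ largest elements, whereas the paper traverses it $t+1$ times (with $t$ the number of classes of $\nu$) and compares against the class count of $\mu$.
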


\begin{proof}
Let $\nu$ be a special vertex in a cycle $\pi$ and let $\mu$ be the vertex preceding $\nu$ in $\pi$.  Without loss of generality, assume that $\pi$ starts and ends at $\nu$.  Suppose $\nu$ has $t$ classes.  Let $\eta$ be the concatenation of $\pi^{t}$ and the subpath of $\pi$ from $\nu$ to $\mu$.  Consider any $\rho \in \Sigma(\eta)$ for which each of the $t+1$ visits to $\nu$ is an instruction to add a new class.  Then $\rho$ has at least $t+1$ classes.  
Since $\nu$ has precisely $t$ classes, one of which is the singleton $\{b\}$, and since $(\mu, \nu)$ is an edge, $\mu$ has at most $t$ classes.  Therefore $\rho$ has more classes than $\mu$, and hence at least one of them must be inactive, proving that $\nu$ is ambiguous.
\end{proof}

Ambiguous vertices are the only vertices which do not necessarily uniquely determine the class of the next entry of an equivalence relation whose associated path contains that vertex.  They allow the next entry to be added to an inactive class, if one exists, or to be the first element in a new class.  This means that more than one equivalence relation may be associated with a path containing an ambiguous vertex.  Therefore, if $\pi$ is a path containing ambiguous vertices, we can describe an equivalence relation $\sigma\in \Sigma(\pi)$  by specifying whether the next entry is added to a new class of $\sigma$ or to an inactive class of $\sigma$ at each ambiguous vertex.  In this way, $\sigma$ is fully specified by $\pi$ and the location of the next entry of $\sigma$ for each ambiguous vertex of $\pi$.

\begin{ex}
\label{ex active classes}
Let $B=\overline{\Eq}_{4} \backslash Y$, where
\begin{equation*}
Y=\{\lvertb1\lvertm2 \h 4\lvertm3\lverte, \lvertb1\h 3\lvertm 2\h 4\lverte, \lvertb1 \h 3\lvertm2\lvertm4\lverte\}
\end{equation*}
and consider the avoidance set $\Av(B)$, whose factor graph $\Gamma_{B}$ is shown in Figure~\ref{ex 4.10 figure}.  Consider the path 
\begin{equation*}
\pi=\lvertb1\lvertm2 \h 4\lvertm 3\lverte \rightarrow \lvertb1 \h 3\lvertm 2 \h 4\lverte \rightarrow \lvertb1 \h 3\lvertm2\lvertm4\lverte
\end{equation*}
 in $\Gamma_{B}$.  Initially, the vertex $\lvertb1\lvertm2 \h 4\lvertm 3\lverte$ determines the classes of the first four entries of any element of $\Sigma(\pi)$.  Similarly, the second vertex dictates that the fifth entry of any element of $\Sigma(\pi)$ is in the same class as the third entry.  At this point, the class of the first entry is inactive.  The third vertex is ambiguous since it is a special vertex in a cycle; on entering it the next entry of an element of $\Sigma(\pi)$ can either be placed in the inactive class or in a new class.  This gives two relations in $\Sigma(\pi)$: $\lvertb1 \h 6\lvertm2 \h 4\lvertm3 \h 5\lverte$ and $\lvertb1\lvertm2 \h 4\lvertm3 \h 5\lvertm6\lverte$.  
\end{ex}
 
\begin{figure}
\begin{tikzpicture}[> =  {Stealth [scale=1.3]}, thick]
\tikzstyle{everystate} = [thick]
\node [state, shape = ellipse, minimum size = 20pt] (1b24b3) at (0,0) {\small{$\lvertb1\lvertm 2 \h 4\lvertm 3\lverte$}};
\node [state, shape = ellipse, minimum size = 20pt] (13b24) at (4, 0) {\small{$\lvertb1 \h 3\lvertm2 \h 4\lverte$}};
\node [state, shape = ellipse, minimum size = 20pt]  (13b2b4) at (8,0) {\small{$\lvertb1 \h 3\lvertm2\lvertm4\lverte$}};

\path[->]
(1b24b3) edge node {} (13b24)
(13b24) edge node {} (13b2b4)
(1b24b3) edge [bend left] node {} (13b2b4)
(13b2b4) edge [bend left] node {} (1b24b3)
(13b24) edge [loop below] node {} (13b24)
;
\end{tikzpicture}
\caption{The factor graph of $\Av(B)$ from Example \ref{ex active classes}.}
\label{ex 4.10 figure}
\end{figure}

\begin{lemma}
\label{lemma no amb verts in paths}
If $\sigma, \rho \in C_{[b, \infty)}$ and $\Pi(\sigma), \Pi(\rho)$ contain no ambiguous vertices then $\sigma \leq \rho$ if and only if $\Pi(\sigma) \leq \Pi(\rho)$.
\end{lemma}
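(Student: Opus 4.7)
The forward implication $\sigma \leq \rho \Rightarrow \Pi(\sigma) \leq \Pi(\rho)$ is immediate from Proposition \ref{prop subrelns give rise to subpaths} and does not require any assumption about ambiguous vertices. The nontrivial content lies in the converse, and the plan is to reduce it to the following key auxiliary claim: \emph{if $\pi$ is a path in $\Gamma_{B}$ containing no ambiguous vertices, then $|\Sigma(\pi)|=1$.} Once this is established, the converse is almost a formality: if $\Pi(\sigma)\leq \Pi(\rho)$, the subpath $\Pi(\sigma)$ of $\Pi(\rho)$ corresponds to a contiguous interval $S\subseteq [1,|\rho|]$ of length $|\sigma|$; setting $\tau=\rho\Harpoon{S}$ we have $\Pi(\tau\down)=\Pi(\sigma)$, so $\tau\down\in\Sigma(\Pi(\sigma))$. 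By Proposition \ref{prop properties of paths and relations}\ref{reln in in relns of path of itself} we also have $\sigma\in\Sigma(\Pi(\sigma))$, and the singleton claim forces $\tau\down=\sigma$, so $\sigma$ is a consecutive sub-equivalence relation of $\rho$.

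I would prove the auxiliary claim by induction on the length of $\pi=\mu_{1}\rightarrow\dots\rightarrow\mu_{k}$. The base case $k=1$ is trivial, as $\Sigma(\mu_1)=\{\mu_1\}$. For the inductive step, let $\pi^\prime=\mu_1\rightarrow\dots\rightarrow\mu_{k-1}$; by induction there is a unique $\tau^\prime\in\Sigma(\pi^\prime)$, on underlying set $[1,b+k-2]$. Any $\tau\in\Sigma(\pi)$ satisfies $\tau\Harpoon{[1,b+k-2]}=\tau^\prime$, so it remains only to determine the class of the last entry $n=b+k-1$ in $\tau$. The vertex $\mu_{k}$, which equals $\tau\Harpoon{[k,\,b+k-1]}\down$, dictates the relationship between $n$ and the preceding $b-1$ entries $\{k,\dots,b+k-2\}$. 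If $\mu_{k}$ is not special, then $n$ must lie in the class of a specific one of those preceding entries and is therefore uniquely determined. If $\mu_{k}$ is special, then $n$ is not in the same class as any of $\{k,\dots,b+k-2\}$, so it either begins a new class of $\tau$ or extends an inactive class of $\tau^{\prime}$.

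To eliminate the second possibility, I would appeal directly to the definition of an ambiguous vertex. Since $(\mu_{k-1},\mu_{k})$ is an edge of $\Gamma_{B}$ and $\Pi(\tau^{\prime})$ ends at $\mu_{k-1}$, the existence of an inactive class in $\tau^{\prime}$ would witness that $\mu_{k}$ is ambiguous, contradicting the hypothesis on $\pi$. Hence $\tau^{\prime}$ has no inactive class, $n$ must start a new class, and $\tau$ is uniquely determined.

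The main point of care is the inductive step: one must carefully enumerate exactly the three ways the new entry can be placed (join an ``active'' class visible in $\mu_{k}$, start a singleton class, or join an inactive class hidden from $\mu_{k}$), and correctly pair the condition ``$\mu_{k}$ special'' with ``the class of $n$ is not forced by $\mu_{k}$'' and the condition ``$\mu_{k}$ not ambiguous'' with the nonexistence of inactive classes in the particular predecessor relation $\tau^{\prime}$ produced by the induction. Everything else in the argument is a matter of reading off from the definitions of $\Pi$, $\Sigma$, and the subpath order.
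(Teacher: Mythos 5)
Your proposal is correct and follows essentially the same route as the paper: the forward direction is delegated to Proposition \ref{prop subrelns give rise to subpaths}, and the converse is reduced to the claim that a path with no ambiguous vertices has a unique associated equivalence relation, from which $\sigma=\rho\Harpoon{S}\down$ follows. The only difference is one of rigour: the paper asserts $|\Sigma(\pi)|=1$ by appeal to the preceding informal discussion of ambiguous vertices, whereas you supply the inductive argument (correctly pairing ``special'' with ``class of the new entry not forced'' and ``non-ambiguous'' with ``no inactive class in the unique predecessor relation'') that makes this precise.
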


\begin{proof}
($\Leftarrow$) Since there are no ambiguous vertices, $|\Sigma(\Pi(\sigma))|=|\Sigma(\Pi(\rho))|=1$, so $\Sigma(\Pi(\sigma))=\{\sigma\}$ and $\Sigma(\Pi(\rho))=\{\rho\}$ by Proposition \ref{prop properties of paths and relations}. Since each path only has one associated equivalence relation, $\sigma$ must be a sub-equivalence relation of $\rho$ as required.  ($\Rightarrow$) This is immediate from Proposition \ref{prop subrelns give rise to subpaths}.
\end{proof}

A consequence of Lemma \ref{lemma no amb verts in paths} is that there is a one-to-one correspondence between paths with no ambiguous vertices in $\Gamma_{B}$ and their associated equivalence relations in $C_{[b,\infty)}$.  The next lemma follows immediately from Lemma \ref{lemma no amb verts in paths}.

\begin{lemma}
\label{lemma no amb verts}
If a factor graph $\Gamma_{B}$ contains no ambiguous vertices and $\rho, \sigma \in C_{[b, \infty)}$ then $\sigma \leq \rho$ if and only if $\Pi(\sigma) \leq \Pi(\rho)$.\qed
\end{lemma}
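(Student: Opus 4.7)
The statement is essentially an immediate specialisation of the preceding Lemma \ref{lemma no amb verts in paths}, so my plan is very short. The preceding lemma has a local hypothesis: only the two specific paths $\Pi(\sigma)$ and $\Pi(\rho)$ need to avoid ambiguous vertices. The current lemma replaces this with a global hypothesis: the entire factor graph $\Gamma_B$ contains no ambiguous vertices at all.

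The plan is to observe that $\Pi(\sigma)$ and $\Pi(\rho)$, being paths in $\Gamma_B$, can only visit vertices that actually lie in $\Gamma_B$. Hence, if $\Gamma_B$ contains no ambiguous vertices whatsoever, the two specific paths $\Pi(\sigma)$ and $\Pi(\rho)$ trivially contain no ambiguous vertices. Therefore the hypothesis of Lemma \ref{lemma no amb verts in paths} is satisfied, and its conclusion $\sigma \leq \rho \Leftrightarrow \Pi(\sigma) \leq \Pi(\rho)$ applies directly.

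There is no obstacle here — this is a one-line deduction, and indeed the excerpt already flags it as following immediately. The only drafting decision is whether to write it as a formal corollary-style proof or simply remark that the hypothesis of the previous lemma is automatically met. I would opt for the latter, producing a one-sentence proof that reads roughly: \emph{Since $\Pi(\sigma)$ and $\Pi(\rho)$ are paths in $\Gamma_B$ and $\Gamma_B$ has no ambiguous vertices, neither $\Pi(\sigma)$ nor $\Pi(\rho)$ contains an ambiguous vertex, and the result follows from Lemma \ref{lemma no amb verts in paths}.}
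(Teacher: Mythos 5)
Your proposal is correct and is exactly the paper's own (implicit) argument: the paper states that this lemma ``follows immediately from Lemma \ref{lemma no amb verts in paths}'', precisely because the global absence of ambiguous vertices in $\Gamma_B$ forces the two particular paths $\Pi(\sigma)$ and $\Pi(\rho)$ to contain none. Nothing further is needed.
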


So far, we have enough information to give the following partial version of our intended result, which considers the special case in which the factor graph of $C=\Av(B)$ contains no ambiguous vertices.  

\begin{prop}
\label{prop wqo no amb verts}
If the factor graph $\Gamma_{B}$ contains no ambiguous vertices then $C=\Av(B)$ is wqo if and only if $\Gamma_{B}$ is wqo.
\end{prop}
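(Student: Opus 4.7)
My plan is to show that, under the hypothesis that $\Gamma_B$ has no ambiguous vertices, the map $\Pi$ is an order isomorphism between $C_{[b,\infty)}$ and the poset of paths in $\Gamma_B$ under the subpath order. Since well quasi-order is invariant under poset isomorphism, the proposition will then follow from Lemma \ref{lemma Cb and C}, which identifies wqo of $C$ with wqo of $C_{[b,\infty)}$.

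To establish bijectivity of $\Pi$, I would argue as follows. Injectivity amounts to $|\Sigma(\pi)|\le 1$ for every path $\pi$: as noted in the discussion preceding Definition \ref{defn ambiguous vertex}, a path can admit two distinct associated equivalence relations only by virtue of some vertex that offers a genuine choice for the position of the next entry, and such a vertex is necessarily ambiguous; by hypothesis no such vertex exists in $\Gamma_B$. For surjectivity, take any path $\pi=\mu_1\to\cdots\to\mu_k$ in $\Gamma_B$. The edge condition $\mu_i\Harpoon{[2,b]}\cong\mu_{i+1}\Harpoon{[1,b-1]}$ allows the vertices to be glued along their overlaps, producing an equivalence relation $\rho$ of length $b+k-1$ whose length-$b$ consecutive factors are precisely $\mu_1,\dots,\mu_k$, so $\Pi(\rho)=\pi$ by \eqref{eq:Pirho}. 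To verify $\rho\in C$, observe that each $\tau\in B$ has length at most $b$, so any consecutive embedding of $\tau$ into $\rho$ fits inside one length-$b$ factor $\mu_i\in C_b\subseteq \Av(B)$, yielding a contradiction with $\tau\in B$. Hence $\rho\in C_{[b,\infty)}$ with $\Pi(\rho)=\pi$.

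Finally, Lemma \ref{lemma no amb verts} is exactly the assertion that $\sigma\le\rho\iff\Pi(\sigma)\le\Pi(\rho)$ for $\sigma,\rho\in C_{[b,\infty)}$, so $\Pi$ and its inverse both preserve order, confirming that $\Pi$ is an order isomorphism. I expect surjectivity to be the most delicate step: the edge condition automatically ensures that the gluing is consistent, but the inclusion $\rho\in\Av(B)$ requires the separate observation that no forbidden pattern can straddle more than one length-$b$ factor, which is where the choice of $b$ as $\max\{|\rho|:\rho\in B\}$ is used. Everything else is immediate from Lemma \ref{lemma Cb and C} and Lemma \ref{lemma no amb verts}.
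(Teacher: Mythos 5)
Your proof is correct and follows essentially the same route as the paper: Lemma \ref{lemma no amb verts} supplies the order isomorphism between $C_{[b,\infty)}$ and the poset of paths in $\Gamma_{B}$, and Lemma \ref{lemma Cb and C} transfers wqo from $C_{[b,\infty)}$ to $C$. The only difference is that you explicitly verify surjectivity of $\Pi$ (i.e.\ that $\Sigma(\pi)\neq\emptyset$ for every path $\pi$, via the gluing argument and the observation that no element of $B$ can straddle more than one length-$b$ window), a point the paper leaves implicit in its assertion that the two posets are isomorphic.
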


\begin{proof} 
By Lemma \ref{lemma no amb verts}, the poset of paths in $\Gamma_{B}$ is isomorphic to the poset of equivalence relations in $C_{[b, \infty)}$, and so $C_{[b, \infty)}$ is wqo if and only if $\Gamma_{B}$ is wqo.  Then by Lemma \ref{lemma Cb and C}, $C$ is wqo if and only if $\Gamma_{B}$ is wqo.
\end{proof}

\section{Two types of cycles which imply non-wqo}
\label{section wqo unbounded}

The purpose of this section is to show non-wqo for avoidance sets of equivalence relations whose factors graphs contain an in-out cycle or a special vertex in a cycle.  
We do this by utilising the relationship between the poset of paths in the factor graph and the poset of equivalence relations in an avoidance set explored in Section \ref{section factor graph}.  

The only ambiguous vertices we will need to consider are special vertices in cycles; we state the results in terms of special vertices, though it is their ambiguity (guaranteed by \ref{prop special vertex in cycle}) which is key to the outcome.  We will need to consider ambiguous vertices more generally in the following sections.

\begin{lemma}
\label{lemma not wqo in-out cycle}
If $\Gamma_{B}$ contains an in-out cycle then $C=\Av(B)$ is not wqo.
\end{lemma}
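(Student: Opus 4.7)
The plan is to transfer a known antichain of paths in $\Gamma_B$ into an antichain of equivalence relations in $C$, using the machinery already developed in Section \ref{section factor graph}.

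First, since $\Gamma_B$ contains an in-out cycle, Proposition \ref{prop digraphs wqo} tells us that the poset of paths in $\Gamma_B$ under the subpath order is not wqo. Thus there is an infinite antichain of paths $\pi_1, \pi_2, \dots$ in $\Gamma_B$.

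Next, I would verify that $\Sigma(\pi_i) \neq \emptyset$ for every $i$, and then choose $\sigma_i \in \Sigma(\pi_i)$. To see non-emptiness, write $\pi_i = \mu_1 \to \mu_2 \to \dots \to \mu_k$ and build an equivalence relation on $[1,b+k-1]$ inductively: start with $\mu_1$ on $[1,b]$, and at step $j$, when reading $\mu_{j+1}$, add the new point $b+j$ either into the class forced by $\mu_{j+1}$ (when $b$ is not a singleton in $\mu_{j+1}$), or into a brand new class (when $\mu_{j+1}$ is a special vertex). By construction the restriction of the resulting relation to any window of $b$ consecutive points is a vertex of $\Gamma_B$, and therefore lies in $C_b \subseteq C$; since every $\beta \in B$ has length at most $b$, any consecutive embedding of $\beta$ into our relation would produce a length-$b$ window containing $\beta$, contradicting membership of all windows in $C$. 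Hence $\sigma_i \in C_{[b,\infty)}$.

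Finally, the antichain property transfers directly. By Proposition \ref{prop properties of paths and relations}\ref{path is path of reln of path} we have $\Pi(\sigma_i) = \pi_i$ for each $i$. If $\sigma_i \leq \sigma_j$ for some $i \neq j$, then Proposition \ref{prop subrelns give rise to subpaths} gives $\pi_i = \Pi(\sigma_i) \leq \Pi(\sigma_j) = \pi_j$, contradicting the fact that $\{\pi_i\}$ is an antichain. Therefore $\{\sigma_i\}$ is an infinite antichain in $C$, proving that $C$ is not wqo.

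The only slightly delicate point is the realisation step, establishing that every path in $\Gamma_B$ is the image under $\Pi$ of some $\sigma \in C$; once that is in place the rest is immediate from the lemmas already available. Note that ambiguity of special vertices is not needed here, only the mere existence of the ``new class'' option at each special vertex, which is why this argument applies even when the in-out cycle passes through special vertices.
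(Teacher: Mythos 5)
Your proof is correct and follows essentially the same route as the paper's: extract an infinite antichain of paths via Proposition \ref{prop digraphs wqo}, choose $\sigma_i \in \Sigma(\pi_i)$, and transfer the antichain property back using Proposition \ref{prop subrelns give rise to subpaths}. The only difference is that you explicitly verify $\Sigma(\pi_i)\neq\emptyset$ by a realisation construction, a point the paper leaves implicit (relying on the informal discussion in Section \ref{section factor graph} of building an associated equivalence relation by reading the vertices of a path in order).
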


\begin{proof}
Since $\Gamma_{B}$ contains an in-out cycle, it is not wqo by Proposition \ref{prop digraphs wqo} so there is an infinite antichain of paths $\pi_{1}, \pi_{2}, \dots$ in $\Gamma_{B}$.  Aiming for a contradiction, suppose $C$ is wqo.  Take equivalence relations $\sigma_{i} \in \Sigma(\pi_{i})$ for $i=1, 2, \dots$ .  Since $C$ is wqo, $\sigma_{j} \leq \sigma_{k}$ for some $j \neq k$.  Then by Proposition \ref{prop subrelns give rise to subpaths} $\pi_{j} \leq \pi_{k}$, a contradiction.  We conclude that $C$ is not wqo.
\end{proof}

Now we turn our attention to avoidance sets whose factor graphs contain special vertices in cycles.

\begin{defn}
\label{defn unbounded class}
An avoidance set $C=\Av(B)$ is \emph{unbounded} if there is no (finite) upper bound on the number of equivalence classes  of its members; otherwise $C$ is \emph{bounded}. 
\end{defn}

\begin{lemma}
\label{lemma amb verts and bounds}
An avoidance set $C=\Av(B)$ is unbounded if and only if $\Gamma_{B}$ contains a cycle with a special vertex in it.
\end{lemma}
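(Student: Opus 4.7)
The plan is to establish a correspondence between equivalence classes of $\rho\in C_{[b,\infty)}$ and special vertices appearing in the associated path $\Pi(\rho)$, via the following observation: if $m\in[b,|\rho|]$ is the minimum of some equivalence class of $\rho$, then $m$ sits alone in its class in the restriction $\rho\Harpoon{[m-b+1,m]}$, and so the vertex $\rho\Harpoon{[m-b+1,m]}\down$ occurring at position $m-b+1$ of $\Pi(\rho)$ is special.

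For the forward direction, I would suppose $C$ is unbounded and pick $\rho\in C$ with at least $|C_b|+b$ equivalence classes; this forces $|\rho|\geq b$, so $\rho\in C_{[b,\infty)}$. At most $b-1$ of the class-minima of $\rho$ can lie in $[1,b-1]$, so at least $|C_b|+1$ of them lie in $[b,|\rho|]$, and by the observation above this gives at least $|C_b|+1$ visits to special vertices along $\Pi(\rho)$. Since $\Gamma_B$ has only $|C_b|$ vertices, the pigeonhole principle produces a special vertex $\nu$ visited at two distinct positions of $\Pi(\rho)$; the subpath between these positions is then a cycle through $\nu$ in $\Gamma_B$.

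For the backward direction, suppose $\Gamma_B$ contains a cycle through a special vertex $\nu$. Rotating if necessary, I may take this cycle in the form $\pi=\nu\to v_1\to\dots\to v_{\ell-1}\to\nu$ and let $t$ denote the number of equivalence classes of $\nu$. For every $k\geq 1$, the concatenation $\pi^k$ is a valid path in $\Gamma_B$, and I will construct a relation $\rho_k$ with $\Pi(\rho_k)=\pi^k$ by processing the vertices of the path in order: the first vertex fixes the initial $b$ elements and their classes according to $\nu$, and each subsequent vertex determines the class of one newly added element---uniquely if that vertex is non-special, or else with a choice between joining an inactive class and starting a brand-new singleton class. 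At each of the $k$ returns to $\nu$ I would take the ``new class'' option, yielding a $\rho_k$ with at least $t+k$ equivalence classes. The relation $\rho_k$ lies in $C$ because every length-$b$ consecutive sub-relation of $\rho_k$ is a vertex of $\Gamma_B\subseteq C_b$ and hence avoids $B$, and since every element of $B$ has length at most $b$, $\rho_k$ itself must also avoid $B$. Letting $k$ grow then produces members of $C$ with unboundedly many classes.

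The main obstacle lies in the backward direction: one must carefully justify that the ``new class'' option is always available at each visit to the ambiguous vertex $\nu$ and that the resulting piecewise construction yields a genuine relation in $C_{[b,\infty)}$ with the prescribed associated path. The former follows because $\nu$ is special (so $b$ is alone in $\nu$), making a singleton the canonical valid choice; the latter relies on the standard window-gluing observation that global $B$-avoidance is equivalent to local $B$-avoidance on all length-$b$ windows.
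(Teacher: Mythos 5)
Your proof is correct and follows essentially the same route as the paper's: classes of an associated relation can only be created at visits to special vertices, so unboundedly many classes force a repeated special vertex (hence a cycle through one), and conversely iterating a cycle through a special vertex while always opting for a new class produces relations with arbitrarily many classes. Your forward direction, via class minima and the pigeonhole principle, is in fact a more careful formalisation of the paper's rather informal assertion that ``the only way to allow an unbounded number of classes \dots is for some path to visit a special vertex twice.''
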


\begin{proof}
($\Rightarrow$) For any path in $\Gamma_{B}$, the only vertices where a class might be added to an associated equivalence relation are special vertices.  Since $\Gamma_{B}$ is a finite digraph, the only way to allow an unbounded number of classes in equivalence relations is for some path to visit a special vertex twice, i.e. if there is a special vertex in a cycle.

($\Leftarrow$) Suppose there is a special vertex $\nu_{a}$ in a cycle $\eta$ in $\Gamma_{B}$, and without loss of generality assume $\eta$ starts and ends at $\nu_{a}$.  Consider the equivalence relations $\theta_{k}\in \Sigma(\eta^{k})$, where $k\geq 1$, which add a new class each time an ambiguous vertex is entered (including $\nu_{a}$).  Each equivalence relation $\theta_{k}$ has at least $k$ classes.  Since this holds for any $k \geq 1$, $C$ is unbounded.
\end{proof}

\begin{lemma}
\label{lemma wqo and bounds}
If an avoidance set is unbounded then it is not wqo. 
\end{lemma}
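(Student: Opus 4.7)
The plan is to exhibit an explicit infinite antichain in $C$, exploiting the ambiguity guaranteed by the hypothesis. Since $C$ is unbounded, Lemma \ref{lemma amb verts and bounds} yields a cycle in $\Gamma_{B}$ containing a special vertex $\nu$, and by Proposition \ref{prop special vertex in cycle} this $\nu$ is ambiguous. Passing to a simple cycle through $\nu$, I may assume $\eta$ is a simple cycle of length $L\geq 1$ based at $\nu$, and fix $n_{0}$ large enough that $(n_{0}-1)L\geq b-1$.

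For each $n\geq n_{0}$ I would construct $\sigma_{n}\in\Sigma(\eta^{n})\subseteq C$ by specifying choices at the $n$ element-adding visits to $\nu$: the first visit opens a new singleton class $K_{n}$; visits $2,\dots,n-1$ open further new singleton classes; the $n$th visit adjoins its element to $K_{n}$, which is legitimate because $(n-1)L\geq b-1$ guarantees that the sole element of $K_{n}$ at position $b+L$ is no longer among the top $b-1$, so $K_{n}$ is inactive. At any further ambiguous vertices of $\eta$ I would uniformly open new singleton classes. The resulting $\sigma_{n}$ has length $b+nL$ and contains a distinguished size-$2$ class $K_{n}$ whose two elements lie at positions $b+L$ and $b+nL$; the resulting span $(n-1)L$ depends injectively on $n$. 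Each length-$b$ factor of $\sigma_{n}$ is by construction a vertex of $\Gamma_{B}$, hence avoids $B$, and so $\sigma_{n}\in C$.

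To show $\{\sigma_{n}\}_{n\geq n_{0}}$ is an antichain, I would suppose for contradiction that $\sigma_{n}\lecons\sigma_{m}$ with $n<m$, so that $\sigma_{n}\congcons \sigma_{m}\Harpoon{W}$ for some consecutive window $W=[k+1,k+b+nL]$ of $\sigma_{m}$. Comparing first vertices, which must both equal $\nu$, forces $k$ to be a multiple of $L$, say $k=jL$ with $0\leq j\leq m-n$. A case analysis on $j$ then shows that either the restricted path $\sigma_{m}\Harpoon{W}$ contains no adjoining step at all (when $j<m-n$), leaving no size-$2$ class to play the role of $K_{n}$; or $j=m-n$, in which case the adjunction present in the restriction sits at positions differing from those of $K_{n}$ under the rigid shift $x\mapsto x-jL$ required by $\congcons$. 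The main obstacle is this last case, where one must verify that the ``baseline'' structure inherited from the cycle $\eta$ cannot spuriously mimic the class $K_{n}$ at the right position. This is handled by noting that $\sigma_{n}$ and $\sigma_{m}$ share identical baseline structure along their respective traversals of $\eta$, so the only candidate match for $K_{n}$ is the image of $K_{m}$, and the explicit computation of positions then forces $n=m$, contradicting $n<m$.
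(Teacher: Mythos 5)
Your proposal is correct and takes essentially the same approach as the paper: both invoke Lemma \ref{lemma amb verts and bounds} and Proposition \ref{prop special vertex in cycle} to locate an ambiguous special vertex on a cycle, then build an explicit infinite antichain from powers of that cycle by prescribing the new-class/inactive-class choices at that vertex, and rule out consecutive embeddings via the rigidity of contiguous maps (the paper uses two adjoin-to-inactive events, at the second and last visits, where you use one tying the first visit to the last and exploit simplicity of the cycle instead). One small imprecision: non-special vertices of $\eta$ can force further elements into $K_n$ within $b-1$ positions of its creation, so $K_n$ need not have size exactly $2$ and the threshold $(n-1)L\geq b-1$ needs a slight enlargement, but neither affects the positional argument.
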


\begin{proof}
Suppose $C=\Av(B)$ is unbounded.  By Lemma \ref{lemma amb verts and bounds}, $\Gamma_{B}$ contains a cycle $\xi$ with a special vertex $\nu_{a}$ in it, and $\nu_{a}$ is ambiguous by Proposition \ref{prop special vertex in cycle}.  Let $\mu$ be the vertex preceding $\nu_{a}$ in $\xi$.

For $k\geq3$, let $\pi_{k}$ be the path that starts at $\mu$, proceeds $k$ times around $\xi$ and ends at $\nu_{a}$.  We will look at the equivalence relations $\sigma_{k}$ in each $\Sigma(\pi_{k})$ such that for each $k$:
\begin{nitemize}
\item $\sigma_{k}$ has underlying set $[1, n^{(k)}]$.
\item $n_{1}^{k}$ is added to an inactive class of $\sigma_{k}$ the second time $\nu_{a}$ is entered.
\item $n_{2}^{k}$ is added to an inactive class of $\sigma_{k}$ the last time $\nu_{a}$ is entered.
\item at all other visits to special vertices, a new class is added to $\sigma_{k}$.
\end{nitemize}

We claim that the set $\{\sigma_{k}: k\geq 3\}$ forms an infinite antichain. 

Aiming for a contradiction, suppose that $\sigma_{i} \leq \sigma_{j}$ for some $j >i\geq3$.  Suppose $f: [1, n^{(i)}] \rightarrow [1, n^{(j)}]$ is the underlying embedding.  It can be seen that $n_{1}^{i}, n_{2}^{i}, n_{1}^{j}, n_{2}^{j}$ are the only entries added on entering $\nu_{a}$ which are not the smallest element of their classes.  Therefore $f$ must map $n_{1}^{i}$  to $n_{1}^{j}$, and since $i<j$ this forces $f$ to map $n_{2}^{i}$ to an element of $[1, n^{(j)}]$ which is the smallest element in its class.  This is a contradiction, since $n_{2}^{i}$ is not the smallest element of its class, so this prevents $f$ from preserving equivalence classes.  Therefore $\sigma_{i} \nleq \sigma_{j}$, so $\{\sigma_{k}: k\geq 3\}$ is an infinite antichain and $C$ is not wqo. 
\end{proof}

\section{Coloured Equivalence Relations}
\label{section coloured equivs}

We have dealt with factor graphs where there is a cycle containing a special vertex or there is an in-out cycle in Section \ref{section wqo unbounded}.  Now we look at the remaining avoidance sets.  Since the factor graph of any such avoidance set has no special vertices in cycles, there is a bound on the number of classes members of the avoidance set may have.  This motivates the concept of \emph{coloured equivalence relations} and the \emph{coloured factor graphs} associated with them; the idea being that we can `encode' equivalence classes of members of a bounded avoidance set using only a finite amount of additional information.

In this section we introduce these concepts and then explore the relationship between the coloured and uncoloured 
versions.  Unlike the uncoloured case, there will be a one-to-one correspondence between coloured equivalence relations and paths in their coloured factor graphs, bypassing the multiple choices previously arising at ambiguous vertices.  This will enable us to tackle the wqo question for the remaining avoidance sets in Section \ref{section wqo in general}.

\begin{defn}
\label{defn colouring equivs}
For $k\geq 1$, a \emph{$k$-colouring} of an equivalence relation $\sigma$ is an injective mapping from the set of equivalence classes of $\sigma$ to $[k]$.  In this context we call the elements of $[k]$ \emph{colours}.  An equivalence relation $\sigma$ together with a $k$-colouring is called a \emph{$k$-coloured equivalence relation}. 
\end{defn}

When the value of $k$ is not important, we will speak of \emph{colourings} and \emph{coloured} equivalence relations.  An equivalence relation without a colouring will be called an \emph{uncoloured} equivalence relation.  We will distinguish between coloured and uncoloured equivalence relations by writing coloured equivalence relations with their colourings as superscripts.  For example, if $\sigma$ is an uncoloured relation and $c$ is a colouring of $\sigma$, the coloured equivalence relation of $\sigma$ with $c$ will be written $\sigma^{c}$.  In concrete examples we will underline the equivalence classes and put their colours as subscripts, e.g. see Example \ref{coloured relns}.

\begin{defn}
\label{defn coloured cons order}
Let $\sigma^{c_1}$, $\rho^{c_2}$ be two $k$-coloured equivalence relations.
We say that $\sigma^{c_1}\leq_{\col} \rho^{\sigma_2}$ if there exists a contiguous embedding $f$ of $\sigma$ into $\rho$ which respects colourings; more specifically, for every equivalence class $C$ of $\sigma$, we require $c_1(C)=c_2(D)$, where $D$ is the unique equivalence class of $\rho$ such that $f(C)\subseteq D$.
We also say that $\sigma^{c_1}$ is a \emph{coloured sub-equivalence relation} of $\rho^{c_{2}}$.
We call $\leq_{\col}$ the \emph{coloured consecutive embedding order}.

\end{defn}

From now on we will denote $\leq_{\col}$ simply by $\leq$, since it is always clear from the nature of the equivalence relations which order is meant.  Also note that $\sigma^{c_1}\leq \rho^{c_2}$ implies $\sigma\leq\rho$.

Given a coloured equivalence relation $\sigma^{c}$ on a set $X$ and any subset $Y$ of $X$, the restriction of $\sigma^{c}$ to points in $Y$ is denoted $\sigma^{c} \Harpoon{Y}$.  As with uncoloured equivalence relations, $\sigma^{c} \Harpoon{Y}$ is a coloured sub-equivalence relation of $\sigma^{c}$ and any coloured sub-equivalence relation of $\sigma^{c}$ can be expressed as a restriction of $\sigma^{c}$ to a subset of $X$.

\begin{defn}
\label{defn col iso}
Two coloured equivalence relations $\sigma^{c_{1}}, \rho^{c_{2}}$ are \emph{isomorphic} if there exists a contiguous bijection from $\sigma^{c_{1}}$ to $\rho^{c_{2}}$ that preserves equivalence classes and colourings.
\end{defn}

We take $\overline{\Eq}^{\h \col}$ to be the set of finite coloured equivalence relations (modulo isomorphism), and consider the poset $(\overline{\Eq}^{\h \col},\leq_{\col})$.

\begin{ex}
\label{coloured relns}
Let $\sigma=\lvertb 1 \lvertm 2 \h 4 \lvertm 3\lverte$ and $\rho=\lvertb 1 \h 2 \lvertm 3 \h 8 \lvertm 4 \h 6\lvertm 5\lvertm7\lverte$ be uncoloured equivalence relations.  It is easy to see that $\sigma \leq \rho$.  Now consider the colourings
\begin{align*}
\sigma^{c_{1}}&=\lvertb \underline{\colorbox{blue!30}{1}}_{\hh1} \hspace{-1mm}\lvertm \underline{\colorbox{orange}{2 4}}_{\hh2} \hspace{-0.75mm}\lvertm \underline{\colorbox{green}{3}}_{\hh3} \hspace{-0.9mm}\lverte\\
\sigma^{c_{2}}&=\lvertb \underline{\colorbox{blue!30}{1}}_{\hh1} \hspace{-1mm}\lvertm \underline{\colorbox{pink}{2 4}}_{\hh4} \hspace{-1mm}\lvertm \underline{\colorbox{yellow}{3}}_{\hh5} \hspace{-1mm}\lverte\\
\rho^{c_{3}}&=\lvertb \underline{\colorbox{pink}{1 2}}_{\hh 4} \hspace{-1mm} \lvertm \underline{\colorbox{blue!30}{3 8}}_{\hh1} \hspace{-1mm} \lvertm \underline{\colorbox{orange}{4 6}}_{\hh 2} \hspace{-1mm}\lvertm \underline{\colorbox{green}{5}}_{\hh 3} \hspace{-1mm}\lvertm \underline{\colorbox{purple!20}{7}}_{\hh 6} \hspace{-1mm}\lverte.
\end{align*}

It can be seen that $\sigma^{c_{1}}\leq \rho^{c_{3}}$ since the contiguous map $f:[4] \rightarrow [7]$ defined by $f(1)=3$ preserves both equivalence classes and colourings. 

On the other hand, $\sigma^{c_{2}}\nleq \rho^{c_{3}}$ since it is not possible to map the class $\lvertm \underline{\colorbox{pink}{2 4}}_{\hh4} \hspace{-1mm}\lvertm$ of $\sigma^{c_{2}}$ to the class $\lvertb \underline{\colorbox{pink}{1 2}}_{\hh 4} \hspace{-1mm} \lverte$ of $\rho^{c_{3}}$ contiguously.
\end{ex}

Suppose $C=\Av(B)$ is a bounded avoidance set of equivalence relations, with bound $k$ on the number of equivalence classes of its elements.  We define $C^{\col}$ to be the set of $k$-coloured elements of $C$.  If $X \subseteq \mathbb{N}$, we take $C_{X}^{\col}= \{\sigma^{c} \in C^{\col} : |\sigma| \in X\}$.

Let $C$ and $k$ be as above.
Let $\overline{\Eq}^{\h \col}_{b}$ denote the set of all $k$-coloured equivalence relations on the set $[b]$.  
We take $G_{b}^{\col}$ to be the digraph with vertex set $\overline{\Eq}_{b}^{\h\col}$ and an edge from vertex $\mu^{c_{1}}$ to vertex $\nu^{c_{2}}$ if and only if $\mu^{c_{1}}\Harpoon{[2, b]} \cong \nu^{c_{2}}\Harpoon{[1, b-1]}$.  
We define the \emph{coloured equivalence relation factor graph} $\Gamma_{B}^{\col}$ of $C$ as the induced subgraph of $G_{b}^{\col}$ with vertex set $C_{b}^{\col}=\overline{\Eq}_{b}^{\h\col} \cap C^{\col}$; this will be referred to as the \emph{coloured factor graph} from now on.

\begin{ex}
\label{ex col graph}
Let $B=\overline{\Eq}_{4} \backslash X$ where
\begin{equation*}
X= \{\lvertb1\lvertm 2 \h 3 \h 4\lverte, \lvertb 1 \h 2 \h 3 \lvertm 4\lverte\}
\end{equation*}
and consider $C=\Av(B)$.  The factor graph $\Gamma_{B}$ is shown in Figure \ref{ex 7.9 figure}.  It is easy to see that $\lvertb 1 \h 2 \h 3\lvertm4\lverte$ is the only ambiguous vertex and that the bound on the number of classes of elements of $C$ is $k=3$.  The vertices of the coloured factor graph $\Gamma_{B}^{\col}$ of $C$ are the 3-colourings of vertices of $\Gamma_{B}$.  The coloured factor graph is also shown in Figure \ref{ex 7.9 figure}.  There is an edge from vertex $\lvertb \underline{\colorbox{blue!30}{1}}_{\hh1} \hspace{-0.8mm}\lvertm \underline{\colorbox{orange}{2 3 4}}_{\hh2} \hspace{-0.8mm}\lverte$ to vertex $\lvertb\cequiv{orange}{1 2 3}{2}\lvertm\cequiv{blue!30}{4}{1}\lverte$ because 
\begin{equation*}
\lvertb \underline{\colorbox{blue!30}{1}}_{\hh1} \hspace{-0.8mm}\lvertm \underline{\colorbox{orange}{2 3 4}}_{\hh2} \hspace{-0.8mm}\lverte \Harpoonc{\{2, 3, 4\}} = \lvertb \underline{\colorbox{orange}{2 3 4}}_{\hh2} \hspace{-0.8mm}\lverte \cong \lvertb\cequiv{orange}{1 2 3}{2}\lverte = \lvertb\cequiv{orange}{1 2 3}{2}\lvertm\cequiv{blue!30}{4}{1}\lverte \Harpoonc{\{1, 2, 3\}}.
\end{equation*}

As there are no cycles in $\Gamma_{B}$, $C$ is finite and so $C^{\col}$ is also finite.  The elements of $C^{\col}$ are all 3-colourings of equivalence relations in $C$; in other words, all 3-colourings of the equivalence relations on $<4$ points and of $\lvertb1\lvertm 2\h 3\h4\lverte$, $\lvertb1 \h 5\lvertm 2 \h3\h4\lverte$, $\lvertb1\lvertm2\h3\h4\h\lvertm5\lverte$ and $\lvertb1\h2\h3\lvertm4\lverte$.
\end{ex}

\begin{figure}
\begin{center}
\begin{tikzpicture}[> =  {Stealth [scale=1.3]}, thick]
\tikzstyle{everystate} = [thick]
\node [state, shape = ellipse, minimum size = 20pt] (v1) at (2,0) {\small{$\lvertb1\lvertm 2 \h 3 \h 4\lverte$}};
\node [state, shape = ellipse, minimum size = 20pt] (v2) at (6,0) {\small{$\lvertb1 \h 2 \h 3 \lvertm4\lverte$}};

\path[->]
(v1) edge node {} (v2)
;
\end{tikzpicture}\vspace{10mm}\\
\begin{tikzpicture}[> =  {Stealth [scale=1.3]}, thick]
\tikzstyle{everystate} = [thick]
\node [state, shape = ellipse, minimum size = 20pt] (bo) at (1,0) {\small{$\lvertb \underline{\colorbox{blue!30}{1}}_{\hh1} \hspace{-0.8mm}\lvertm \underline{\colorbox{orange}{2 3 4}}_{\hh2} \hspace{-0.8mm}\lverte$}};
\node [state, shape = ellipse, minimum size = 20pt] (po) at (1,-1.5) {\small{$\lvertb \underline{\colorbox{pink}{1}}_{\hh 3}\hspace{-0.8mm}\lvertm\underline{\colorbox{orange}{2 3 4}}_{\hh 2} \hspace{-0.8mm}\lverte$}};
\node [state, shape = ellipse, minimum size = 20pt] (bp) at (1,-3) {\small{$\lvertb \cequiv{blue!30}{1}{1}\lvertm\cequiv{pink}{2 3 4}{3}\lverte$}};
\node [state, shape = ellipse, minimum size = 20pt] (op) at (1,-4.5) {\small{$\lvertb\cequiv{orange}{1}{2}\lvertm\cequiv{pink}{2 3 4}{3}\lverte$}};
\node [state, shape = ellipse, minimum size = 20pt] (ob) at (1,-6) {\small{$\lvertb\cequiv{orange}{1}{2}\lvertm\cequiv{blue!30}{2 3 4}{1}\lverte$}};
\node [state, shape = ellipse, minimum size = 20pt] (pb) at (1,-7.5) {\small{$\lvertb\cequiv{pink}{1}{3}\lvertm\cequiv{blue!30}{2 3 4}{1}\lverte$}};

\node [state, shape = ellipse, minimum size = 20pt] (ob2) at (7,0) {\small{$\lvertb\cequiv{orange}{1 2 3}{2}\lvertm\cequiv{blue!30}{4}{1}\lverte$}};
\node [state, shape = ellipse, minimum size = 20pt] (op2) at (7,-1.5) {\small{$\lvertb\cequiv{orange}{1 2 3}{2}\lvertm\cequiv{pink}{4}{3}\lverte$}};
\node [state, shape = ellipse, minimum size = 20pt] (pb2) at (7,-3) {\small{$\lvertb\cequiv{pink}{1 2 3}{3}\lvertm\cequiv{blue!30}{4}{1}\lverte$}};
\node [state, shape = ellipse, minimum size = 20pt] (po2) at (7,-4.5) {\small{$\lvertb\cequiv{pink}{1 2 3}{3}\lvertm\cequiv{orange}{4}{2}\lverte$}};
\node [state, shape = ellipse, minimum size = 20pt] (bo2) at (7,-6) {\small{$\lvertb\cequiv{blue!30}{1 2 3}{1}\lvertm\cequiv{orange}{4}{2}\lverte$}};
\node [state, shape = ellipse, minimum size = 20pt] (bp2) at (7,-7.5) {\small{$\lvertb\cequiv{blue!30}{1 2 3}{1}\lvertm\cequiv{pink}{4}{3}\lverte$}};

\path[->]
(bo) edge node {} (ob2)
(bo) edge node {} (op2)
(po) edge node {} (ob2)
(po) edge node {} (op2)
(bp) edge node {} (pb2)
(bp) edge node {} (po2)
(op) edge node {} (pb2)
(op) edge node {} (po2)
(ob) edge node {} (bo2)
(ob) edge node {} (bp2)
(pb) edge node {} (bo2)
(pb) edge node {} (bp2)
;
\end{tikzpicture}
\caption{The uncoloured and coloured factor graphs of $\Av(B)$ from Example \ref{ex col graph}.}
\label{ex 7.9 figure}
\end{center}
\end{figure}

Let $\sigma^{c} \in C_{[b, \infty)}^{\col}$ be a coloured equivalence relation, and without loss of generality assume its underlying set is $[n]$ for some $n\in \mathbb{N}$.  We associate $\sigma^{c}$ with the path $\Pi^{\prime}(\sigma^{c})$ given by
\begin{equation*}
\sigma^{c}\Harpoon{[1, b]} \rightarrow \sigma^{c}\Harpoon{[2, b+1]}\downarrow \rightarrow \dots \rightarrow \sigma^{c}\Harpoon{[n-b+1, n]}\downarrow
\end{equation*}

in $\Gamma_{B}^{\col}$.  
The notation $\rho^{c}\downarrow$ means the unique equivalence relation on $[|\rho|]$ isomorphic to $\rho^{c}$, in line with uncoloured relations, as introduced in Section \ref{section factor graph}.

On the other hand, we associate a path $\pi$ in $\Gamma_{B}^{\col}$ with the coloured equivalence relation $\Sigma^{\prime}(\pi) \in C_{[b, \infty)}^{\col}$ such that $\Pi^{\prime}(\Sigma^{\prime}(\pi))=\pi$.  Note that, unlike the analogue for uncoloured equivalence relations, $\Sigma^{\prime}(\pi)$ will always be a single coloured equivalence relation.

\begin{ex}
\label{ex paths and relns}
Consider again the avoidance set from Example \ref{ex col graph}, $C=\Av(B)$  for $B=\overline{\Eq}_{4} \backslash X$ and 
\begin{equation*}
X= \{\lvertb1\lvertm 2 \h 3 \h 4\lverte, \lvertb 1 \h 2 \h 3 \lvertm 4\lverte\}.
\end{equation*}
The coloured factor graph of $C$ is shown in Figure \ref{ex 7.9 figure}.  Here the path 
\begin{equation*}
\lvertb\cequiv{pink}{1}{3}\lvertm\cequiv{blue!30}{2 3 4}{1}\lverte \rightarrow \lvertb\cequiv{blue!30}{1 2 3}{1}\lvertm\cequiv{pink}{4}{3}\lverte
\end{equation*}
is associated with the coloured equivalence relation $\lvertb \cequiv{pink}{1 5}{3}\lvertm \cequiv{blue!30}{2 3 4}{1}\lverte$.  

On the other hand, the coloured equivalence relation $\lvertb \cequiv{blue!30}{1}{1}\lvertm\cequiv{orange}{2 3 4}{2}\lvertm \cequiv{pink}{5}{3}\lverte$ is associated with the path 
\begin{equation*}
\lvertb \cequiv{blue!30}{1}{1}\lvertm\cequiv{orange}{2 3 4}{2}\lverte \rightarrow \lvertb\cequiv{orange}{1 2 3}{2}\lvertm \cequiv{pink}{4}{3}\lverte.
\end{equation*}
\end{ex}

It can be seen that $\Pi^{\prime}$ and $\Sigma^{\prime}$ are mutual inverses, as stated in the following proposition.

\begin{prop}
\label{prop basic coloured properties}
\begin{thmenumerate}
\item If $\sigma^{c} \in C_{[b, \infty)}^{\col}$, then $\sigma^{c} = \Sigma^{\prime}(\Pi^{\prime}(\sigma^{c}))$.
\item If $\pi$ is a path in $\Gamma_{B}^{\col}$, then $\pi=\Pi^{\prime}(\Sigma^{\prime}(\pi))$.
\end{thmenumerate}
\qed
\end{prop}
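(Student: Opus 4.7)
The plan is to show that $\Pi^{\prime}$ and $\Sigma^{\prime}$ are mutually inverse bijections between $C^{\col}_{[b,\infty)}$ and the set of paths in $\Gamma^{\col}_{B}$; both parts of the proposition are then immediate from this bijective correspondence.

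First I would check that $\Pi^{\prime}(\sigma^{c})$ really is a path in $\Gamma^{\col}_{B}$ for any $\sigma^{c}\in C^{\col}_{[b,\infty)}$. Each vertex $\sigma^{c}\Harpoon{[i,i+b-1]}\downarrow$ is a consecutive sub-equivalence relation of $\sigma^{c}$, so it lies in $C^{\col}_{b}$, and the edge condition between successive vertices holds because the restrictions $\sigma^{c}\Harpoon{[i,i+b-1]}\downarrow\Harpoonc{[2,b]}$ and $\sigma^{c}\Harpoon{[i+1,i+b]}\downarrow\Harpoonc{[1,b-1]}$ are both isomorphic, as coloured equivalence relations, to $\sigma^{c}\Harpoon{[i+1,i+b-1]}\downarrow$.

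The main content is showing that $\Sigma^{\prime}$ is well defined, meaning that for each path $\pi$ in $\Gamma^{\col}_{B}$ there exists a unique $\sigma^{c}\in C^{\col}_{[b,\infty)}$ with $\Pi^{\prime}(\sigma^{c})=\pi$. I would argue this by induction on the length of $\pi$. The base case, a single vertex $\nu^{c}\in C^{\col}_{b}$, is handled by $\nu^{c}$ itself. For the inductive step, write $\pi=\pi^{\prime}\to\nu^{c}$ with $\pi^{\prime}$ of length $k\geq 0$, and by induction take the unique $\tau^{c'}\in C^{\col}_{b+k-1}$ with $\Pi^{\prime}(\tau^{c'})=\pi^{\prime}$. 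The edge from the last vertex of $\pi^{\prime}$ to $\nu^{c}$ forces $\tau^{c'}\Harpoonc{[k+1,b+k-1]}\downarrow\cong\nu^{c}\Harpoonc{[1,b-1]}$ as coloured relations, so we uniquely extend $\tau^{c'}$ to a coloured equivalence relation on $[b+k]$ by adjoining $b+k$ with the colour assigned to $b$ in $\nu^{c}$: if that colour already appears on a class of $\tau^{c'}$, the new element joins that class, otherwise it forms a fresh singleton class. This is precisely where the coloured setting eliminates the ambiguity at special vertices encountered in Section \ref{section factor graph}, since the colour rather than a choice selects the class.

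The one obstacle worth checking is that the extension $\sigma^{c}$ really lies in $C^{\col}_{[b,\infty)}$, which reduces to verifying that its underlying uncoloured relation avoids $B$. Since $b=\max\{|\rho|:\rho\in B\}$, any consecutive embedding $\rho\hookrightarrow\sigma$ with $\rho\in B$ would factor through some length-$b$ window of $\sigma$; but the length-$b$ windows of $\sigma$ are exactly the vertices of $\pi$ (after forgetting colours), all of which lie in $C_{b}$ and therefore avoid $B$. Once $\Sigma^{\prime}$ is established as well defined, part (ii) is immediate from its definition. For part (i), the uniqueness established above identifies $\Sigma^{\prime}(\Pi^{\prime}(\sigma^{c}))$ as the unique element mapped by $\Pi^{\prime}$ to $\Pi^{\prime}(\sigma^{c})$, and since $\sigma^{c}$ itself has this property, we conclude $\sigma^{c}=\Sigma^{\prime}(\Pi^{\prime}(\sigma^{c}))$.
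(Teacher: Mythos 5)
Your argument is correct, and it is essentially the verification the paper has in mind: the paper states this proposition without proof as an immediate consequence of the definitions, and your induction on path length simply spells out why $\Sigma^{\prime}$ is well defined. In particular you correctly isolate the key point — injectivity of the colouring forces the class of each new element, eliminating the choices that arise at ambiguous vertices in the uncoloured setting — and the check that the constructed relation avoids $B$ via its length-$b$ windows is exactly right.
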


In this way, there is bijective correspondence between coloured equivalence relations and their associated paths.  Moreover, this correspondence respects the coloured consecutive ordering in the following sense.

\begin{prop}
\label{prop col respects ordering}
If $\sigma^{c_{1}}, \rho^{c_{2}} \in C_{[b, \infty)}^{\col}$ then $\sigma^{c_{1}} \leq \rho^{c_{2}}$ if and only if $\Pi^{\prime}(\sigma^{c_{1}}) \leq \Pi^{\prime}(\rho^{c_{2}})$ in $\Gamma_{B}^{\col}$.
\end{prop}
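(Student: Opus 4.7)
The plan is to prove both implications directly from the definitions, paralleling the uncoloured arguments of Lemma \ref{lemma no amb verts in paths} and Proposition \ref{prop subrelns give rise to subpaths}. The crucial simplification in the coloured setting is that, since a colouring is by definition injective on equivalence classes, two elements lie in the same class of $\sigma^{c_1}$ (respectively $\rho^{c_2}$) if and only if they carry the same colour. This collapses the ambiguous-vertex analysis needed in the uncoloured case into a straightforward comparison of colours.

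For the forward direction, I would start from a colour-preserving contiguous embedding $f\colon\sigma\to\rho$, which necessarily has the form $f(i)=k+i-1$ for some $k\geq 1$. For each $j$ the restriction of $f$ to $[j,j+b-1]$ is a bijection onto $[k+j-1,k+j+b-2]$ that preserves classes and colours, so after applying $\downarrow$ we obtain
\[
\sigma^{c_1}\Harpoon{[j,j+b-1]}\downarrow \;=\; \rho^{c_2}\Harpoon{[k+j-1,k+j+b-2]}\downarrow
\]
as elements of $\overline{\Eq}_b^{\h\col}$. This is precisely the statement that $\Pi'(\sigma^{c_1})$ is the subpath of $\Pi'(\rho^{c_2})$ that begins at position $k$.

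For the backward direction, I would fix a subpath occurrence of $\Pi'(\sigma^{c_1})$ inside $\Pi'(\rho^{c_2})$, say starting at position $k$, and define $f(i)=k+i-1$. This is automatically contiguous; I then need to verify that $f$ preserves colours and classes. For colours: given any $i\in[1,|\sigma|]$ pick any window $[j,j+b-1]$ containing $i$, and read off the colour at position $i-j+1$ in the $j$th vertex of $\Pi'(\sigma^{c_1})$ and in the $(k+j-1)$th vertex of $\Pi'(\rho^{c_2})$; equality of these two vertices forces the colour of $i$ in $\sigma^{c_1}$ to equal the colour of $f(i)$ in $\rho^{c_2}$. Class preservation then follows immediately from injectivity of the colourings: $(i,i')\in\sigma$ iff the classes of $i$ and $i'$ carry the same colour iff the classes of $f(i)$ and $f(i')$ carry the same colour iff $(f(i),f(i'))\in\rho$.

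The only point requiring care, and the one I would double-check at the outset, is that the colour attached to an element inside a restricted window $\sigma^{c_1}\Harpoon{[j,j+b-1]}$ is the colour of the full ambient class in $\sigma^{c_1}$ rather than of the possibly strictly smaller restricted class; this is an immediate consequence of the convention that colourings are inherited by restrictions. Once this bookkeeping is secured, no analogue of the ambiguous-vertex analysis of Section \ref{section factor graph} is required, which is the whole point of passing to the coloured setting.
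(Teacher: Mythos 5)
Your proof is correct and follows essentially the same route as the paper: the forward direction is the windowwise restriction argument that the paper borrows from Proposition \ref{prop subrelns give rise to subpaths}, and your backward direction is an explicit unwinding of the paper's appeal to the uniqueness of the coloured relation associated with a path (Proposition \ref{prop basic coloured properties}), with injectivity of the colourings doing the work in both versions. The bookkeeping point you flag --- that an element's colour in a restricted window is that of its ambient class --- is exactly the convention the paper adopts, so nothing further is needed.
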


\begin{proof}
($\Rightarrow$) This is analogous to the uncoloured version (Proposition \ref{prop subrelns give rise to subpaths}).

($\Leftarrow$) Since $\Pi^{\prime}(\sigma^{c_{1}}) \leq \Pi^{\prime}(\rho^{c_{2}})$ and each path is associated with a single coloured equivalence relation, $\sigma^{c_{1}}=\Sigma^{\prime}(\Pi^{\prime}(\sigma^{c_{1}})) \leq \Sigma^{\prime}(\Pi^{\prime}(\rho^{c_{2}}))=\rho^{c_{2}}$.
\end{proof}

We note in passing that decidability of the wqo and atomicity problems for avoidance sets of the poset of $k$-coloured equivalence relations under the coloured consecutive embedding order is an immediate consequence of Proposition \ref{prop col respects ordering}

\section{WQO under the Consecutive Embedding Order}
\label{section wqo in general}

In this section we establish decidability of wqo for avoidance sets of equivalence relations under the consecutive embedding order.    We use the tools introduced in Section \ref{section coloured equivs} to relate wqo in factor graphs to wqo in coloured factor graphs.  This will allow us to show that in the remaining cases all avoidance sets are wqo.  We finish the section by combining this with the results of Section \ref{section wqo unbounded} to show decidability of wqo for avoidance sets of equivalence relations under the consecutive embedding order in general.

\begin{lemma}
\label{lemma collection graphs and new graph wqo}
If $\Gamma_{B}$ contains no special vertices in cycles and is wqo, then $\Gamma_{B}^{\col}$ is also wqo. 
\end{lemma}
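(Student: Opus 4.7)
\emph{Proof plan.} Since $\Gamma_B^{\col}$ is a finite digraph, by Proposition~\ref{prop digraphs wqo} it suffices to show that $\Gamma_B^{\col}$ contains no in-out cycle. We argue by contradiction: suppose $\Gamma_B^{\col}$ has an in-out cycle $\Pi$ with a vertex $w_1=v_1^{c_1}$ of in-degree $\geq 2$ and a vertex $w_2=v_2^{c_2}$ of out-degree $\geq 2$ in $\Gamma_B^{\col}$. Forgetting the colourings, $\Pi$ projects to a cycle $W$ in $\Gamma_B$, and every vertex of $W$ lies on some simple cycle of $\Gamma_B$.

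Call a vertex $u\in\Gamma_B$ \emph{left-special} if element~$1$ of $u$ lies in a singleton class, mirroring the notion of a special vertex. The central observation to establish is: if $v^c$ has in-degree $\geq 2$ in $\Gamma_B^{\col}$, then either $v$ itself has in-degree $\geq 2$ in $\Gamma_B$, or the unique predecessor of $v$ in $\Gamma_B$ is left-special; and dually, if $v^c$ has out-degree $\geq 2$ in $\Gamma_B^{\col}$, then either $v$ has out-degree $\geq 2$ in $\Gamma_B$, or the unique successor of $v$ in $\Gamma_B$ is special. The proof is a direct inspection of the edge condition $u^{d}\Harpoon{[2,b]}\cong v^{c}\Harpoon{[1,b-1]}$: this pins down the colour of every class of $u$ that meets $[2,b]$, so any freedom in the colouring $d$ must lie on the class $\{1\}$, and such a class exists in $u$ precisely when $u$ is left-special.

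Since $\Gamma_B$ is wqo it has no in-out cycle, so $W$ is not one either, forcing either (I)~every vertex of $W$ has in-degree $1$ in $\Gamma_B$, or (II)~every vertex of $W$ has out-degree $1$ in $\Gamma_B$. Case~(II) is disposed of quickly: the unique $\Gamma_B$-successor of $v_2$ then coincides with its successor in $W$, hence lies on a simple cycle of $\Gamma_B$, and by the dual observation it is special, contradicting the hypothesis. Case~(I) is the main obstacle; here the unique $\Gamma_B$-predecessor $u$ of $v_1$ is left-special and lies on some simple cycle $S$ of $\Gamma_B$, so it remains to rule out left-special vertices on cycles. For this I plan a balance argument along $S=u_0\to u_1\to\dots\to u_n=u_0$: as the window slides one step, the number of classes of the window decreases by exactly one when we leave a left-special vertex, and increases by exactly one when we enter a special vertex. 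Since we return to the same window at $u_n=u_0$, the number of left-special vertices on $S$ equals the number of special vertices on $S$; under the hypothesis this common count is zero, so $u$ cannot be left-special, the required contradiction.
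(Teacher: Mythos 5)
Your proposal is correct and takes essentially the same route as the paper's proof: both reduce the claim via Proposition~\ref{prop digraphs wqo} to excluding in-out cycles in $\Gamma_B^{\col}$, observe that two coloured edges lying over a single uncoloured edge force a singleton class at position $b$ (a special vertex) or at position $1$, and dispose of the latter case by a class-counting argument around the cycle. The paper phrases that last step as ``the class count must increase somewhere on the way back, forcing a special vertex'' rather than your symmetric balance identity, but the content is identical.
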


\begin{proof}
Since $\Gamma_{B}$ is wqo, it has no in-out cycles by Proposition \ref{prop digraphs wqo}.  Aiming for a contradiction, suppose that $\Gamma_{B}^{\col}$ is not wqo, so has an in-out cycle $\bar{\eta}$.  Let $\bar{\eta}$ have in-edge $(\mu^{c_{1}}, \nu^{c_{2}})$ and out-edge $(\sigma^{c_{3}}, \rho^{c_{4}})$. 

The cycle $\bar{\eta}$ must correspond to a cycle $\eta$ in $\Gamma_{B}$, and by assumption $\eta$ contains no special vertices, so neither does $\bar{\eta}$.  Since $\Gamma_{B}$ is wqo, $\eta$ is not an in-out cycle by Proposition \ref{prop digraphs wqo}.  
We split considerations into two cases.

\textit{Case 1: $\eta$ is not an out-cycle.}
The assumption implies that $(\sigma, \rho)$ is the only edge starting at $\sigma$ in $\Gamma_{B}$.  On the other hand, in $\Gamma_{B}^{\col}$ there are at least two edges starting at $\sigma^{c_{3}}$: an edge in $\bar{\eta}$ and the out-edge $(\sigma^{c_{3}}, \rho^{c_{4}})$.  Both of these edges in $\Gamma_{B}^{\col}$ correspond to the edge $(\sigma, \rho)$ in $\Gamma_{B}$.  This means that there is another colouring $c_{5}$ of $\rho$ such that $(\sigma^{c_{3}}, \rho^{c_{5}})$ is an edge in $\bar{\eta}$.  Since $(\sigma^{c_{3}}, \rho^{c_{4}})$, $(\sigma^{c_{3}}, \rho^{c_{5}})$ are edges, we have that $\rho^{c_{4}}\Harpoon{[1, b-1]} \cong \sigma^{c_{3}}\Harpoon{[2, b]} \cong \rho^{c_{5}}\Harpoon{[1, b-1]}$.  Therefore, $\rho^{c_{4}}\Harpoon{[1, b-1]} = \rho^{c_{5}}\Harpoon{[1, b-1]}$.  Since $c_{4}$ and $c_{5}$ are distinct colourings, $b$ is coloured differently under each of these.  If $|\rho_{b}|\neq1$, the colour of $b$ would be uniquely determined by the colour of other elements in its class, so $c_{4}$ and $c_{5}$ would be identical.  Therefore, it must be the case that $|\rho_{b}|=1$, so $\rho$ is a special vertex in 
the cycle $\eta$, a contradiction.  

\textit{Case 2: $\eta$ is not an in-cycle.}
Now $(\mu, \nu)$ is the only edge ending at $\nu$ in $\Gamma_{B}$.  However, there are at least two edges ending at $\mu^{c_{1}}$ in $\Gamma_{B}^{\col}$: an edge in $\bar{\eta}$ and the in-edge $(\mu^{c_{1}}, \nu^{c_{2}})$.  Both of these must correspond to $(\mu, \nu)$  in $\Gamma_{B}$, so there is another colouring $c_{6}$ of $\mu$ such that $(\mu^{c_{6}}, \nu^{c_{2}})$ is the edge in $\bar{\eta}$.  Then since $(\mu^{c_{1}}, \nu^{c_{2}})$, $(\mu^{c_{6}}, \nu^{c_{2}})$ are edges, $\mu^{c_{1}}\Harpoon{[2, b]} \cong \nu^{c_{2}}\Harpoon{[1, b-1]} \cong \mu^{c_{6}}\Harpoon{[2, b]}$, meaning that $\mu^{c_{1}}\Harpoon{[2, b]} = \mu^{c_{6}}\Harpoon{[2, b]}$.  If $|\mu_{1}|\neq 1$, the colour of $1$ is determined by the colour of other elements in its class, so $c_{1}$ and $c_{6}$ are not distinct.  Therefore, $|\mu_{1}|= 1$, and from this we can see that $\mu$ has one more class than $\nu$.  Now consider traversing the subpath of $\eta$ from $\nu$ to $\mu$.  Since $\mu$ has one more class than $\nu$, at some point in this path there is an edge $(\nu^{\prime}, \mu^{\prime})$ such that $\nu^{\prime}$ has fewer classes than $\mu^{\prime}$; the only way for this to happen is for $\mu^{\prime}$ to be special, a contradiction.

In each of the two cases we obtained a contradiction, and this completes the proof.
\end{proof}

\begin{lemma}
\label{lemma collection graph wqo so is C}
If the factor graph $\Gamma_{B}$ has no special vertices in cycles and is wqo, then the avoidance set $C=\Av(B)$ is also wqo. 
\end{lemma}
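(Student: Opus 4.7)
The plan is to lift the problem from uncoloured to coloured equivalence relations, exploit wqo on the coloured side, and then forget the colours. The hypotheses of the lemma are exactly what is required to carry out each of these three steps.

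First, since $\Gamma_B$ contains no special vertex lying on a cycle, Lemma \ref{lemma amb verts and bounds} tells us that $C$ is bounded: there exists $k$ such that every $\sigma\in C$ has at most $k$ equivalence classes. Hence every $\sigma\in C$ admits at least one $k$-colouring (simply assign distinct colours in $[k]$ to its classes in, say, order of their smallest elements), so the set $C^{\col}$ is non-empty and serves as a meaningful lift of $C$.

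Second, the two assumptions on $\Gamma_B$ are precisely the hypotheses of Lemma \ref{lemma collection graphs and new graph wqo}, so $\Gamma_B^{\col}$ is wqo under the subpath order. By Proposition \ref{prop col respects ordering}, the map $\Pi'$ is an order-isomorphism between $C^{\col}_{[b,\infty)}$ and the poset of paths in $\Gamma_B^{\col}$, so $C^{\col}_{[b,\infty)}$ is wqo under the coloured consecutive embedding order.

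Third, suppose for contradiction that $C$ is not wqo. By Lemma \ref{lemma Cb and C}, $C_{[b,\infty)}$ is not wqo either, so it contains an infinite antichain $\sigma_1,\sigma_2,\dots$. For each $i$, choose any $k$-colouring $c_i$ of $\sigma_i$, which is possible by the first step. The sequence $\sigma_1^{c_1},\sigma_2^{c_2},\dots$ lies in $C^{\col}_{[b,\infty)}$, so by the second step there exist $i<j$ with $\sigma_i^{c_i}\leq_{\col}\sigma_j^{c_j}$. Forgetting colours, the underlying contiguous embedding witnesses $\sigma_i\leq\sigma_j$, contradicting the antichain property.

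There is no real obstacle here: the conceptual content has been packaged into Lemma \ref{lemma collection graphs and new graph wqo}, Proposition \ref{prop col respects ordering} and Lemma \ref{lemma amb verts and bounds}, and the only subtlety is the almost trivial observation that $\leq_{\col}$ refines $\leq$ on underlying uncoloured relations, which is immediate from Definition \ref{defn coloured cons order}. The argument is a clean application of the colouring machinery developed in Section \ref{section coloured equivs}.
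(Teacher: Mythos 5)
Your proof is correct and follows essentially the same route as the paper's: pass to the coloured factor graph via Lemma \ref{lemma collection graphs and new graph wqo}, colour an alleged infinite antichain, find a comparable pair using Proposition \ref{prop col respects ordering}, and forget the colours. The only difference is that you make explicit the boundedness step (via Lemma \ref{lemma amb verts and bounds}) and the colour-forgetting observation, both of which the paper leaves implicit.
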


\begin{proof}
Aiming for a contradiction, suppose that $\Gamma_{B}$ is wqo but $C_{[b, \infty)}$ is not.  Take an antichain $\sigma_{1}, \sigma_{2}, ... \in C_{[b, \infty)}$.  Since $\Gamma_{B}$ is wqo, so is $\Gamma_{B}^{\col}$ by Lemma \ref{lemma collection graphs and new graph wqo}, meaning that for some $i, j$ and colourings $c_{k}, c_{l}$ of $\sigma_{i}, \sigma_{j}$ respectively, we have that $\Pi^{\prime}(\sigma_{i}^{c_{k}}) \leq \Pi^{\prime}(\sigma_{j}^{c_{l}})$.  Then by Proposition \ref{prop col respects ordering}, $\sigma_{i}^{c_{k}} \leq \sigma_{j}^{c_{l}}$ and hence $\sigma_{i} \leq \sigma_{j}$, a contradiction.  Therefore if $\Gamma_{B}$ is wqo, so is $C_{[b, \infty)}$ and then by Lemma \ref{lemma Cb and C}, $C$ is wqo as required.
\end{proof}

We can now prove our main results concerning the wqo question for the consecutive order:

\begin{thrm}
\label{thrm wqo cdn cons}
A finitely based avoidance set $C=\Av(B)$ is wqo under the consecutive embedding ordering if and only if $\Gamma_{B}$ has no in-out cycles and no special vertices in cycles.
\end{thrm}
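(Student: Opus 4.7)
The plan is to split into the two implications and, in each direction, assemble the statement by combining results already available in the paper; no new machinery is required at this stage since all the heavy lifting has been done in Sections \ref{section wqo unbounded} and \ref{section wqo in general}.

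For the ($\Leftarrow$) direction I would assume that $\Gamma_B$ has no in-out cycles and no special vertices in cycles. The absence of in-out cycles, combined with Proposition \ref{prop digraphs wqo}, immediately tells us that the poset of paths in $\Gamma_B$ under the subpath order is wqo. Since additionally $\Gamma_B$ contains no special vertex in any cycle, Lemma \ref{lemma collection graph wqo so is C} applies and delivers that $C=\Av(B)$ is wqo, as required.

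For the ($\Rightarrow$) direction I would prove the contrapositive: if $\Gamma_B$ contains an in-out cycle or a special vertex in a cycle, then $C$ is not wqo. I would handle the two cases separately. In the first case, Lemma \ref{lemma not wqo in-out cycle} directly gives that $C$ is not wqo. In the second case, Lemma \ref{lemma amb verts and bounds} converts the existence of a special vertex in a cycle into $C$ being unbounded, and then Lemma \ref{lemma wqo and bounds} yields that $C$ is not wqo. Putting the two cases together completes the contrapositive.

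There is no real obstacle here, since the conceptual content has been absorbed into the lemmas; the proof is essentially a bookkeeping exercise assembling Proposition \ref{prop digraphs wqo} with Lemmas \ref{lemma not wqo in-out cycle}, \ref{lemma amb verts and bounds}, \ref{lemma wqo and bounds}, and \ref{lemma collection graph wqo so is C}. The only care needed is to make sure the two ``bad'' configurations (in-out cycle, special vertex in a cycle) together cover exactly the negation of the condition in the statement, which is immediate from their definitions. I would then follow this theorem with the short decidability remark (Theorem \ref{thrm decidability wqo cons}): since $\Gamma_B$ is a finite digraph effectively computable from $B$, and since the two forbidden configurations are clearly algorithmically detectable, wqo of $\Av(B)$ is decidable.
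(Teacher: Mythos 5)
Your proposal is correct and matches the paper's own proof essentially step for step: the forward direction uses Lemmas \ref{lemma not wqo in-out cycle}, \ref{lemma amb verts and bounds} and \ref{lemma wqo and bounds} (the paper phrases it directly rather than as a contrapositive, but the logic is identical), and the reverse direction combines Proposition \ref{prop digraphs wqo} with Lemma \ref{lemma collection graph wqo so is C} exactly as you describe. No gaps.
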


\begin{proof}
($\Rightarrow$) Suppose $C=\Av(B)$ is wqo.  By Lemma \ref{lemma wqo and bounds} and Lemma \ref{lemma amb verts and bounds}, $\Gamma_{B}$ cannot contain any special vertices in cycles.  By Lemma \ref{lemma not wqo in-out cycle}, $\Gamma_{B}$ cannot contain any in-out cycles.\\ 
($\Leftarrow$) Suppose $\Gamma_{B}$ contains no in-out cycles or special vertices in cycles.  Since $\Gamma_{B}$ contains no in-out cycles, it is wqo by Proposition \ref{prop digraphs wqo} and therefore we can apply Lemma \ref{lemma collection graph wqo so is C} to see that $C$ is wqo, completing the proof.
\end{proof}

\begin{thrm}
\label{thrm decidability wqo cons}
It is decidable whether a finitely based avoidance set $C=\Av(B)$ is wqo under the consecutive embedding ordering.
\end{thrm}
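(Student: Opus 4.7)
The plan is to derive Theorem \ref{thrm decidability wqo cons} as a straightforward algorithmic consequence of the structural characterisation in Theorem \ref{thrm wqo cdn cons}. Given a finite basis $B$, we already have an explicit combinatorial description of the property "$C=\Av(B)$ is wqo" in terms of the factor graph $\Gamma_{B}$: namely, the absence of in-out cycles and of special vertices lying on cycles. Both of these are finite checks on a finite object, so the whole procedure will be decidable once we verify each step can be carried out mechanically.

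First I would describe the preprocessing. From the input $B$, compute $b=\max\{|\rho|:\rho\in B\}$, enumerate $\overline{\Eq}_{b}$ (a finite set, as there are finitely many equivalence relations on $[b]$), and discard those relations which contain some $\rho\in B$ as a consecutive sub-equivalence relation; this yields the vertex set $C_b$ of $\Gamma_B$. Containment testing for each pair is finite, since there are only $|\mu|-|\rho|+1$ candidate consecutive embeddings to check. Then add an edge $(\mu,\nu)$ whenever $\mu\Harpoon{[2,b]}\cong\nu\Harpoon{[1,b-1]}$, which is again a direct check. This produces $\Gamma_B$ explicitly.

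Next, identify the special vertices, i.e. those $\nu\in C_b$ whose largest entry $b$ lies in a singleton class; this is trivial to read off. To check whether any special vertex lies in a cycle, for each special $\nu$ test whether there is a directed path from $\nu$ back to $\nu$ of length $\geq 1$; this is standard graph reachability. To detect in-out cycles, compute the in-degree and out-degree of every vertex and, for each simple cycle in each strongly connected component, determine whether it contains a vertex of in-degree $\geq 2$ and a vertex of out-degree $\geq 2$. Equivalently, one can check inside each nontrivial strongly connected component whether there exists a vertex of in-degree at least two (with respect to the whole graph, or within the component) and a vertex of out-degree at least two, both reachable from each other; since the component is strongly connected, any such pair is joined by a cycle through both. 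This too is a finite computation.

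Having performed these two checks, Theorem \ref{thrm wqo cdn cons} tells us that $C$ is wqo if and only if both checks fail. This establishes decidability. There is no genuine obstacle here: the work was done in Theorem \ref{thrm wqo cdn cons}, where we had to characterise wqo in terms of a finite-graph condition. The only thing worth noting is that the algorithm is clearly not polynomial in the size of $B$, since $|C_b|$ can be as large as the Bell number $B_b$; but decidability, rather than complexity, is what is claimed.

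\begin{proof}
Let $B$ be a finite set of equivalence relations and let $b=\max\{|\rho|:\rho\in B\}$. The set $\overline{\Eq}_{b}$ is finite and effectively computable, and for each $\mu\in\overline{\Eq}_{b}$ and each $\rho\in B$ it is decidable whether $\rho\leq_{\textup{cons}}\mu$, since there are only finitely many contiguous maps $[|\rho|]\to[b]$ to test. Hence the vertex set $C_b=\overline{\Eq}_b\cap C$ of $\Gamma_B$ can be listed explicitly. For each ordered pair $(\mu,\nu)$ of such vertices, the condition $\mu\Harpoon{[2,b]}\cong\nu\Harpoon{[1,b-1]}$ is a direct check, so $\Gamma_B$ itself can be constructed in finite time. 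Special vertices are read off from the requirement that the entry $b$ form a singleton class.

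On the finite digraph $\Gamma_B$ we now perform two decidable tests. First, for each special vertex $\nu$, check whether there is a directed cycle through $\nu$; this is standard reachability. Second, check whether $\Gamma_B$ contains an in-out cycle, which by definition means a cycle containing both a vertex of in-degree at least two and a vertex of out-degree at least two. Equivalently, this happens precisely when some strongly connected component of $\Gamma_B$ of size at least two contains a vertex of in-degree at least two and a vertex of out-degree at least two (both taken within $\Gamma_B$); these quantities are all computable from the adjacency lists. By Theorem \ref{thrm wqo cdn cons}, $C$ is wqo if and only if both tests return negative answers, and hence wqo is decidable.
\end{proof}
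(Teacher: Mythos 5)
Your proposal is correct and follows essentially the same route as the paper: both reduce the question to checking the two conditions of Theorem \ref{thrm wqo cdn cons} (no in-out cycles, no special vertices in cycles) on the explicitly constructible finite digraph $\Gamma_B$. You simply spell out the algorithmic details (constructing $C_b$, reachability tests, the strongly-connected-component reformulation of the in-out cycle check) that the paper leaves implicit.
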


\begin{proof}
It is decidable whether $\Gamma_{B}$ is wqo by Proposition \ref{prop digraphs wqo} since we can check for in-out cycles.  It is also decidable whether $\Gamma_{B}$ contains special vertices in cycles. Therefore the conditions of Theorem \ref{thrm wqo cdn cons} are decidable and so the result follows.
\end{proof}

\begin{ex}
\label{ex section 4 summary 2}
Let $B=\{\lvertb 1 \h 2 \h 3\lverte, \lvertb 1 \h3\lvertm 2 \lverte\}$ and consider $C=\Av(B)$.  Figure \ref{ex 4.27 figure} shows the factor graph $\Gamma_{B}$. Since $\lvertb1\lvertm2\lvertm3\lverte$ is a special vertex in a cycle, $C$ is not wqo by Theorem \ref{thrm wqo cdn cons}.  In addition, the path 
\begin{equation*}
\lvertb1\h 2 \lvertm 3 \lverte \rightarrow \lvertb 1 \lvertm2\lvertm3\lverte \rightarrow \lvertb 1 \lvertm2\lvertm3\lverte \rightarrow \lvertb1\lvertm2 \h 3\lverte
\end{equation*}
 forms an in-out cycle, breaking the other required condition for wqo.  An example of an infinite antichain in $C$ is $\{ \lvertb 1 \h n \lvertm 2\lvertm \dots \lvertm n-1\lverte : n\geq 4\}$.
 \end{ex}

\begin{figure}
\begin{tikzpicture}[> =  {Stealth [scale=1.3]}, thick]
\tikzstyle{everystate} = [thick]
\node [state, shape = ellipse, minimum size = 20pt] (1b2b3) at (8,0) {\small{$\lvertb1\lvertm2\lvertm3\lverte$}};
\node [state, shape = ellipse, minimum size = 20pt] (1b23) at (0, 0) {\small{$\lvertb1\lvertm2 \h 3\lverte$}};
\node [state, shape = ellipse, minimum size = 20pt]  (12b3) at (4,0) {\small{$\lvertb1 \h 2\lvertm3\lverte$}};

\path[->]
(1b23) edge [bend left] node {} (12b3)
(12b3) edge node {} (1b23)
(12b3) edge node {} (1b2b3)
(1b2b3) edge [loop right] node {} (1b2b3)
(1b2b3) edge [bend left] node {} (1b23)
;
\end{tikzpicture}
\caption{The factor graph of $\Av(\lvertb 1 \h 2 \h3\lverte, \lvertb 1 \h 3\lvertm 2 \lverte)$.}
\label{ex 4.27 figure}
\end{figure}

\begin{ex}
\label{ex section 4 summary 3}
Let $B=\{\lvertb1\lvertm2\lvertm3\lverte, \lvertb1 \h 2\lvertm3\lverte\}$ and $C=\Av(B)$.  The factor graph $\Gamma_{B}$ can been seen in Figure \ref{ex 4.28 figure}. This graph is wqo and contains no special vertices, so certainly has none in cycles, meaning that $C$ is wqo. 
\end{ex}

\begin{figure}
\begin{tikzpicture}[> =  {Stealth [scale=1.3]}, thick]
\tikzstyle{everystate} = [thick]
\node [state, shape = ellipse, minimum size = 20pt] (123) at (7,0) {\small{$\lvertb1 \h 2 \h 3\lverte$}};
\node [state, shape = ellipse, minimum size = 20pt] (13b2) at (0, 0) {\small{$\lvertb1 \h 3\lvertm2\lverte$}};
\node [state, shape = ellipse, minimum size = 20pt]  (1b23) at (3.5,0) {\small{$\lvertb1\lvertm2 \h 3\lverte$}};

\path[->]
(13b2) edge node {} (1b23)
(1b23) edge node {} (123)
(123) edge [loop right] node {} (123)
(13b2) edge [loop left] node {} (13b2)
;
\end{tikzpicture}
\caption{The factor graph of $\Av(\lvertb1\lvertm2\lvertm3\lverte, \lvertb1 \h 2\lvertm3\lverte)$.}
\label{ex 4.28 figure}
\end{figure}

\section{Atomicity under the consecutive embedding ordering}
\label{section atomicity cons}

In this section we show decidability of atomicity for avoidance sets of the poset of equivalence relations under the consecutive embedding order.  To do this we use the relationship between the poset of paths in the factor graph and the poset $C_{[b, \infty)}$ discussed in Section \ref{section factor graph}.
Unless otherwise specified, $B$ is understood to be an arbitrary finite set of relations, and $C=\Av(B)$.
In one direction the connection is straightforward:

\begin{lemma}
\label{lemma collection graph not atomic so is C}
If $C=\Av(B)$  is atomic, then $\Gamma_{B}$ is atomic.
\end{lemma}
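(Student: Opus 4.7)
The plan is to prove this by establishing JEP for the poset of paths in $\Gamma_B$ under the subpath order, using the JEP characterisation of atomicity from Proposition \ref{prop jep} in both directions. So I assume $C$ satisfies the JEP, and take two arbitrary paths $\pi_1, \pi_2$ in $\Gamma_B$; the goal is to produce a single path in $\Gamma_B$ containing each of $\pi_1, \pi_2$ as a subpath.

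First I would pick associated equivalence relations $\sigma_i \in \Sigma(\pi_i)$ for $i=1,2$. This requires the observation that $\Sigma(\pi)$ is nonempty for every path $\pi$ in $\Gamma_B$: one may realise $\pi=\mu_1\to\dots\to\mu_k$ by starting from $\mu_1$ on $[1,b]$ and, for each subsequent vertex $\mu_{j+1}$, appending a new element at position $b+j$ whose class is dictated by $\mu_{j+1}$ at non-ambiguous vertices (and chosen arbitrarily among the valid options at ambiguous ones). The resulting relation lies in $C$ because every length-$b$ consecutive window of it is isomorphic to some $\mu_i\in C_b$, and any forbidden pattern in $B$ has length $\leq b$, so it would already appear inside one of the $\mu_i$, a contradiction.

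Next, since $\sigma_1,\sigma_2\in C$ and $C$ has the JEP, there exists $\tau\in C$ with $\sigma_1\leq\tau$ and $\sigma_2\leq\tau$. Because $|\tau|\geq |\sigma_i|\geq b$, we have $\tau\in C_{[b,\infty)}$, so $\Pi(\tau)$ is a well-defined path in $\Gamma_B$. Applying Proposition \ref{prop subrelns give rise to subpaths} to each inclusion $\sigma_i\leq\tau$ gives $\Pi(\sigma_i)\leq \Pi(\tau)$, and Proposition \ref{prop properties of paths and relations}\ref{path is path of reln of path} gives $\Pi(\sigma_i)=\pi_i$. Hence $\pi_1,\pi_2\leq \Pi(\tau)$ in the subpath order, so $\Pi(\tau)$ is the required joint extension. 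Thus the poset of paths in $\Gamma_B$ satisfies the JEP, and is therefore atomic by Proposition \ref{prop jep}.

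The argument is genuinely one-sided and quite short; no real obstacle arises because $\Pi$ is monotone (Proposition \ref{prop subrelns give rise to subpaths}) and surjective onto the path poset in the sense needed. The only slightly delicate point is verifying the existence of some $\sigma_i\in\Sigma(\pi_i)$, and this is a standard realisability check for the factor graph construction; the converse implication (that atomicity of $\Gamma_B$ propagates back to $C$) is what will require the coloured-factor-graph machinery and is presumably handled separately in the sequel.
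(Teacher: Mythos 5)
Your proof is correct and is essentially the paper's argument: the paper phrases it contrapositively (assume two paths with no joint extension, apply atomicity of $C$ to associated relations, and use Proposition \ref{prop subrelns give rise to subpaths} to get a contradiction), whereas you verify the JEP directly, but the ingredients and their roles are identical. Your extra check that $\Sigma(\pi)\neq\emptyset$ is a point the paper leaves implicit, and your justification of it is sound.
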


\begin{proof}
Aiming for a contradiction, suppose that $\Gamma_{B}$ is not atomic.  Take two paths $\pi, \eta$ in $\Gamma_{B}$ such that there is no path containing both $\pi$ and $\eta$, and let $\alpha \in \Sigma(\pi)$ and $\beta \in \Sigma(\eta)$.  Since $C$ is atomic, there is an equivalence relation $\theta \in C$ which contains both $\alpha$ and $\beta$.  Since $b \leq|\alpha| \leq |\theta|$, $\Pi(\theta)$ is a path in $\Gamma_{B}$.  By Proposition \ref{prop subrelns give rise to subpaths}, $\Pi(\alpha), \Pi(\beta) \leq \Pi(\theta)$, in other words $\pi, \eta \leq \Pi(\theta)$.  This is a contradiction, and $\Gamma_B$ is  atomic. 
\end{proof}

In the reverse direction, the situation is complicated by the fact that a $\Gamma_B$ may be atomic for two different reasons -- if it is strongly connected or a bicycle (see Proposition \ref{prop digraphs atomic}) -- and also because in general atomicity of $\Gamma_B$ is not sufficient for that of $C$.

\begin{lemma}
\label{lemma strongly connected atomic}  
If $\Gamma_{B}$ is strongly connected then $C_{[b, \infty)}$ is atomic.
\end{lemma}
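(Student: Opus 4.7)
The plan is to establish the joint embedding property (Proposition~\ref{prop jep}) for $C_{[b,\infty)}$: given $\alpha,\beta\in C_{[b,\infty)}$, I would construct a common upper bound $\theta\in C_{[b,\infty)}$ with $\alpha\leq\theta$ and $\beta\leq\theta$. Since $\Gamma_B$ is strongly connected there is a path $\kappa$ from the terminal vertex of $\Pi(\alpha)$ to the initial vertex of $\Pi(\beta)$, so I would form the concatenated path $\pi=\Pi(\alpha)\cdot\kappa\cdot\Pi(\beta)$ in $\Gamma_B$ and then pick a suitable $\theta\in\Sigma(\pi)$, built by traversing $\pi$ one vertex at a time; at each ambiguous vertex I must decide whether the new entry starts a fresh class or joins an existing inactive class.

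The choices would be: in the $\Pi(\alpha)$ prefix, replicate $\alpha$'s own choices; throughout $\kappa$, always start a new class; in the $\Pi(\beta)$ suffix, replicate $\beta$'s choices by routing each new entry into the $\theta$-class that, under the shift $p\mapsto p+(N-|\beta|)$ (where $N=|\theta|$), corresponds to the inactive class of $\beta$ being used. A short induction along the $\Pi(\alpha)$ prefix shows $\theta\Harpoon{[1,|\alpha|]}=\alpha$, yielding $\alpha\leq\theta$. For $\beta\leq\theta$ I would prove by induction on $i$ that $\theta\Harpoon{[N-|\beta|+1,N-|\beta|+i]}$ is isomorphic to $\beta\Harpoon{[1,i]}$ via the shift. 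The base case $i=b$ is built into the path, since the first vertex of $\Pi(\beta)$ equals $\beta\Harpoon{[1,b]}\down$; at each subsequent step, the vertex of $\pi$ forces the new position's class at non-ambiguous vertices, while the matching rule does so at ambiguous ones.

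The main obstacle will be verifying that the matching rule at an ambiguous vertex in the $\Pi(\beta)$ suffix is well defined: when $\beta$ routes a new entry into an inactive class $I\subseteq[1,i-b]$, the $\theta$-class containing the translated positions $I+(N-|\beta|)$ must exist, be common to all of them, and be inactive in $\theta$ at that step. Existence and commonality follow from the inductive hypothesis, which places $I+(N-|\beta|)$ inside a single $\theta$-class. Inactivity is the delicate point: if some position $q$ in $\theta$'s active window $[N-|\beta|+i-b+1,N-|\beta|+i-1]$ were $\theta$-equivalent to a position of $I+(N-|\beta|)$, both positions would lie in the already-built $\beta$-region and the inductive hypothesis would force $q-(N-|\beta|)\in I$, contradicting the disjointness of $I$ from $\beta$'s own active window $[i-b+1,i-1]$. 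A routine final check that every $b$-window of $\theta$ is a vertex of $\Gamma_B$, and hence lies in $C$, will confirm $\theta\in C_{[b,\infty)}$.
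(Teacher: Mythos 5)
Your proposal is correct and follows essentially the same route as the paper: use strong connectivity to concatenate $\Pi(\alpha)$, a connecting path, and $\Pi(\beta)$, then choose $\theta\in\Sigma(\pi)$ by replicating $\alpha$'s and $\beta$'s choices at ambiguous vertices on the respective segments. The paper's proof is a terse version of this; your additional verification that the replication rule on the $\Pi(\beta)$ suffix is well defined (in particular that the translated class remains inactive) is exactly the detail the paper leaves implicit.
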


\begin{proof}
Take $\alpha, \beta \in C_{[b, \infty)}$ and let $\eta,\zeta$ be paths such that $\Pi(\alpha)=\eta$ and $\Pi(\beta)=\zeta$.  Since $\Gamma_{B}$ is strongly connected there is a path $\xi$ from the end vertex of $\eta$ to the start vertex of $\zeta$.  Let $\pi=\eta\xi\zeta$.  Consider the equivalence relation $\theta \in \Sigma(\pi)$ which is formed by making the same choices at ambiguous vertices as $\alpha$ when $\eta$ is traversed and the same choices as $\beta$ when $\zeta$ is traversed.  This means that $\alpha, \beta \leq \theta$, hence $C_{[b, \infty)}$ satisfies the JEP and so is atomic by Proposition \ref{prop jep}.
\end{proof}

\begin{lemma}
\label{lemma atomicity no amb verts}
Suppose $\Gamma_{B}$ contains no ambiguous vertices.  
Then $C_{[b, \infty)}$ is atomic if and only if $\Gamma_{B}$ is atomic.
\end{lemma}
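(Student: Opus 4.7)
The approach is to transfer the atomicity property across the order-preserving bijection between paths in $\Gamma_B$ and elements of $C_{[b,\infty)}$ that is afforded by the absence of ambiguous vertices. Specifically, from the discussion preceding Lemma \ref{lemma no amb verts in paths}, with no ambiguous vertices every path $\pi$ in $\Gamma_B$ satisfies $|\Sigma(\pi)|=1$, so $\Pi$ restricts to a bijection between $C_{[b,\infty)}$ and the paths in $\Gamma_B$; Lemma \ref{lemma no amb verts} then says this bijection preserves and reflects the respective orderings. Both ``$C_{[b,\infty)}$ is atomic'' and ``$\Gamma_B$ is atomic'' can be read through the JEP (Proposition \ref{prop jep}), and the plan is to push JEP across this order isomorphism.

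For the ($\Leftarrow$) direction, I would take arbitrary $\alpha, \beta \in C_{[b,\infty)}$, form the paths $\Pi(\alpha), \Pi(\beta)$ in $\Gamma_B$, apply JEP in $\Gamma_B$ to find a common upper path $\pi$, and let $\theta$ be the unique element of $\Sigma(\pi)$. Lemma \ref{lemma no amb verts} then gives $\alpha \leq \theta$ and $\beta \leq \theta$, so $C_{[b,\infty)}$ satisfies JEP.

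For the ($\Rightarrow$) direction, I would reverse this: given paths $\pi, \eta$ in $\Gamma_B$, let $\{\sigma\} = \Sigma(\pi)$ and $\{\rho\}=\Sigma(\eta)$, use JEP in $C_{[b,\infty)}$ to obtain $\theta \in C_{[b,\infty)}$ with $\sigma, \rho \leq \theta$, and then Proposition \ref{prop subrelns give rise to subpaths} yields $\pi = \Pi(\sigma) \leq \Pi(\theta)$ and $\eta = \Pi(\rho) \leq \Pi(\theta)$, giving JEP in $\Gamma_B$.

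The main step to sanity-check is the interpretation of atomicity for $C_{[b,\infty)}$, since it is not downward closed in $(\overline{\Eq},\lecons)$; I would treat $C_{[b,\infty)}$ as a poset in its own right and use the JEP formulation throughout. Beyond this, there is no real obstacle: all of the technical work is done in setting up $\Pi$ and $\Sigma$ as mutually inverse order-preserving maps under the no-ambiguous-vertex hypothesis, and atomicity simply transfers across this isomorphism in both directions.
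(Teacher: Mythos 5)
Your proposal is correct and takes essentially the same route as the paper: the paper's proof simply invokes the order isomorphism between the poset of paths in $\Gamma_B$ and the poset $C_{[b,\infty)}$ established via Lemma \ref{lemma no amb verts}, and concludes that atomicity transfers. You merely spell out the JEP transfer in both directions explicitly, which the paper leaves implicit.
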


\begin{proof}
If $\Gamma_{B}$ contains no ambiguous vertices, we saw in Lemma \ref{lemma no amb verts} that the poset of paths in $\Gamma_{B}$ is isomorphic to the poset of equivalence relations in $C_{[b, \infty)}$.  It follows that $\Gamma_{B}$ is atomic if and only if $C_{[b, \infty)}$ is atomic.
\end{proof}

\begin{lemma}
\label{lemma amb vert not in cycle then not atomic}
If $\Gamma_{B}$ contains an ambiguous vertex which is not in a cycle, then $C=\Av(B)$ is not atomic.
\end{lemma}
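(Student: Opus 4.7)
The plan is to show that $C=\Av(B)$ fails the joint embedding property (JEP), hence is not atomic by Proposition \ref{prop jep}. I will construct two distinct equivalence relations $\alpha,\beta\in C$ with $\Pi(\alpha)=\Pi(\beta)$, both ending at $\nu$, encoding the two opposite choices available at the ambiguous vertex $\nu$. The absence of a cycle through $\nu$ will then force any would-be common contiguous extension $\theta\in C$ to embed $\alpha$ and $\beta$ into the same window of $\theta$, collapsing $\alpha=\beta$ and producing the contradiction.

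For the construction, I would unfold Definition \ref{defn ambiguous vertex}: there exist a predecessor $\mu$ of $\nu$ in $\Gamma_B$ and some $\sigma\in C_{[b,\infty)}$ with $\Pi(\sigma)$ ending at $\mu$ and possessing an inactive class. Setting $n=|\sigma|$, I pick $n_0\in[1,n-b+1]$ inside that inactive class and extend $\sigma$ to the set $[1,n+1]$ in two ways: in $\alpha$ place $n+1$ in a fresh singleton class, and in $\beta$ place $n+1$ into the class of $n_0$. In both cases $n+1$ is a singleton inside the window $[n-b+2,n+1]$ — trivially for $\alpha$, and for $\beta$ because the inactive class of $n_0$ lies entirely in $[1,n-b+1]$. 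A routine check then yields $\alpha\Harpoon{[n-b+2,n+1]}\downarrow = \beta\Harpoon{[n-b+2,n+1]}\downarrow = \nu$, so $\Pi(\alpha)=\Pi(\beta)=\Pi(\sigma)\to\nu$, and every new length-$\leq b$ consecutive sub-relation of $\alpha$ or $\beta$ is a sub-relation of $\nu\in C$. Hence $\alpha,\beta\in C$, while $\alpha\neq\beta$ since the class of $n+1$ has size $1$ in $\alpha$ but at least $2$ in $\beta$.

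To derive the contradiction, I would suppose that some $\theta\in C$ consecutively embeds both $\alpha$ and $\beta$ via contiguous maps $f_\alpha(i)=p_\alpha+i-1$ and $f_\beta(i)=p_\beta+i-1$. By Proposition \ref{prop subrelns give rise to subpaths}, $\Pi(\alpha)$ and $\Pi(\beta)$ are subpaths of $\Pi(\theta)$ ending at positions $p_\alpha+n+1-b$ and $p_\beta+n+1-b$ respectively, each carrying the vertex $\nu$. Since $\nu$ lies in no cycle of $\Gamma_B$, $\Pi(\theta)$ can visit $\nu$ at most once: two visits at positions $i<j$ would bound a subwalk from $\nu$ to $\nu$ of positive length, which is itself a cycle through $\nu$. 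Therefore $p_\alpha=p_\beta$; the two contiguous embeddings coincide, and $\theta\Harpoon{[p_\alpha,p_\alpha+n]}\downarrow$ simultaneously equals $\alpha$ and $\beta$, contradicting $\alpha\neq\beta$. The delicate step I expect is the membership check $\alpha,\beta\in C$: the inactive class hypothesis on $n_0$ is precisely what keeps $\beta$'s new $b$-window isomorphic to $\nu$, hence in $C$, rather than some other, potentially forbidden, relation.
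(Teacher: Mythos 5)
Your proposal is correct and follows essentially the same route as the paper: you build two distinct relations $\alpha,\beta\in C$ realising the two choices (new class versus inactive class) at the ambiguous vertex $\nu$ along the same path, and use the fact that $\nu$ lies in no cycle to force any common extension $\theta$ to visit $\nu$ only once, yielding the contradiction. Your write-up simply makes explicit the membership check $\alpha,\beta\in C$ and the coincidence of the two embedding windows, which the paper leaves implicit.
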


\begin{proof}
Let $\nu$ be such a vertex. Then there is an edge $(\mu, \nu)$, and a path $\pi$ ending at $\mu$, such that there exists an equivalence relation $\sigma\in\Sigma(\pi)$ with an inactive class.  Let $\xi$ be the concatenation of $\pi$ and the edge  $(\mu,\nu)$. 
Then we can take $\sigma_{1} \in \Sigma(\xi)$ to add a new class to $\sigma$ at $\nu$, and $\sigma_{2} \in \Sigma(\xi)$ to add the new entry to an inactive class at $\nu$.  
Suppose that there is an equivalence relation $\theta \in C$ containing both $\sigma_{1}$ and $\sigma_{2}$.  Note that $\theta \in C_{[b, \infty)}$ since $|\sigma_{1}|,|\sigma_2|\geq b$.  
As $\nu$ is not in a cycle, $\Pi(\theta)$ can only enter it once.  Then since both $\sigma_{1}$ and $\sigma_{2}$ are consecutive sub-equivalence relations of $\theta$, in $\theta$ we must both add a new class and add to an inactive class at $\nu$, a contradiction.  Therefore $C$ does not satisfy the JEP, meaning that it is not atomic by Proposition \ref{prop jep}. 
\end{proof}

\begin{lemma}
\label{lemma rest not atomic}
If $\Gamma_{B}$ is a bicycle, but not a cycle, with an ambiguous vertex then $C=\Av(B)$ is not atomic.
\end{lemma}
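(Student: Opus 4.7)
The plan is to split by the location of the ambiguous vertex $\nu$. If $\nu$ is not in any cycle of $\Gamma_B$, then Lemma~\ref{lemma amb vert not in cycle then not atomic} applies directly, so the remaining case is $\nu$ lying in one of the two cycles $\xi\in\{\xi_1,\xi_2\}$ of the bicycle. The key structural fact I will exploit is that ``bicycle but not a cycle'' forces the connecting path $P$ to have length at least $1$, so $\Gamma_B$ contains a \emph{bridge} edge $e$ in $P$ which, by the directedness of the bicycle, can be traversed at most once by any path in $\Gamma_B$ (once a path leaves $\xi_1$ via $P$, it cannot return, and once it enters $\xi_2$ it cannot re-enter $P$).

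Using $e$, I build a single path $\pi^{\ast}$ in $\Gamma_B$ that contains $e$ and also winds around $\xi$ enough times to visit $\nu$ repeatedly. Concretely, when $\nu\in\xi_1$, I take $\pi^{\ast}$ to traverse $\xi_1$ a large number $k$ of times (visiting $\nu$ at each pass) and then exit $\xi_1$ via $e$; when $\nu\in\xi_2$, I instead start $\pi^{\ast}$ at the $\xi_1$-endpoint of $P$, cross $e$ into $\xi_2$, and wind around $\xi_2$ $k$ times, visiting $\nu$ at each pass. Exploiting the ambiguity at $\nu$, I then define two distinct equivalence relations $\sigma_A,\sigma_B\in\Sigma(\pi^{\ast})$ which agree on all choices at every ambiguous vertex except the \emph{last} visit to $\nu$: at that visit $\sigma_A$ opens a new class while $\sigma_B$ adjoins the new element to an inactive class. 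Taking $k$ large enough in terms of $b$ and the length of $\xi$ guarantees, by the same bookkeeping as in the proof of Lemma~\ref{lemma wqo and bounds}, that the singleton classes opened at earlier visits to $\nu$ have dropped out of the top $b-1$ positions by the last visit, so $\sigma_B$'s choice is available; and $\sigma_A\neq\sigma_B$ since they differ in the class structure of their final position.

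Now suppose for contradiction that some $\theta\in C$ embeds both $\sigma_A$ and $\sigma_B$. By Proposition~\ref{prop subrelns give rise to subpaths}, each embedding exhibits $\pi^{\ast}=\Pi(\sigma_A)=\Pi(\sigma_B)$ as a subpath of $\Pi(\theta)$. But $\pi^{\ast}$ contains the bridge edge $e$, and $e$ is traversed at most once in any path of $\Gamma_B$; therefore $\pi^{\ast}$ can occur in $\Pi(\theta)$ in at most one position. The two embeddings of $\sigma_A$ and $\sigma_B$ must therefore occupy exactly the same contiguous interval of $\theta$, forcing $\sigma_A\cong\sigma_B$, a contradiction. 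Proposition~\ref{prop jep} then delivers that $C$ is not atomic. The main technical obstacle is the calibration required to ensure $\sigma_B$'s inactive-class option at the last visit to $\nu$; this is what drives the choice of $k$ large enough that the classes opened at earlier visits become inactive, exactly as in the antichain construction of Lemma~\ref{lemma wqo and bounds}.
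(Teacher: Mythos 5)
Your proof is correct and follows essentially the same route as the paper's: reduce to the case where the ambiguous vertex lies on one of the two cycles via Lemma \ref{lemma amb vert not in cycle then not atomic}, build a path that winds around that cycle and passes into the connecting path, take two associated relations differing only in the choice (new class versus inactive class) at the last visit to the ambiguous vertex, and use the fact that an edge of the connecting path can be traversed at most once to force the two embeddings into any common extension of $\theta$ to occupy the same interval. The only differences are cosmetic: the paper winds around the cycle just twice and pins the positional uniqueness on the final vertex of its path rather than on an explicit bridge edge, whereas your choice of large $k$ makes the availability of an inactive class at the last visit slightly more transparent.
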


\begin{proof}
Let $\nu$ be an ambiguous vertex. If $\nu$ is in neither the initial nor terminal cycle, this is dealt with in Lemma \ref{lemma amb vert not in cycle then not atomic}.
Below we consider the case where $\nu$ is in the initial cycle of $\Gamma_B$,  
and the case where it is in the terminal cycle is almost identical. 

Let $\mu$ be the vertex in the initial cycle of $\Gamma_{B}$ that joins the connecting path, let $\gamma$ be the vertex neighbouring $\mu$ on the connecting path.  Let $\pi$ be the path that starts at $\mu$, traverses the initial cycle twice, and ends by traversing the edge from $\mu$ to $\gamma$.    

Suppose $\sigma$ is the equivalence relation  in $\Sigma(\pi)$ that adds a new class each time an ambiguous vertex is entered except for the last time it enters $\nu$, when it adds to an inactive class.  Note that there will definitely be an inactive class the last time $\nu$ is entered as at least one new class has been added since the path last visited this vertex.  Let $\rho$ be the equivalence relation in $\Sigma(\pi)$ that adds a new class every time an ambiguous vertex is entered, including the last visit to $\nu$.

Aiming for a contradiction, suppose that there is an equivalence relation $\theta \in C$ such that $\sigma, \rho \leq \theta$.  Consider the path $\Pi(\theta)$, which exists since $|\theta| \geq |\sigma| \geq b$.  Since $\Pi(\sigma)=\Pi(\rho)=\pi$, both $\Pi(\sigma)$ and $\Pi(\rho)$ end at $\gamma$.  As $\Pi(\sigma), \Pi(\rho) \leq \Pi(\theta)$ and $\gamma$ is not in the initial cycle, the end vertices of $\Pi(\sigma)$ and $\Pi(\rho)$ must coincide in $\Pi(\theta)$. This means that when $\Pi(\theta)$ enters $\nu$ for the last time the new entry of $\theta$ must be added to both an inactive class and to a new class, a contradiction.  
Therefore $C$ does not satisfy the JEP and hence is not atomic by Proposition \ref{prop jep}. 
\end{proof}

We can now prove the first main result of this section, which is a characterisation of atomicity of $C=\Av(B)$ in terms of $\Gamma_B$:

\begin{thrm}
\label{thrm atomicity cons}
A finitely based avoidance set $C=\Av(B)$ of equivalence relations under the consecutive embedding ordering is atomic if and only if the following hold:
\begin{thmenumerate}
\item \label{thrm atomicity cons 0}
For each $\sigma \in C_{[1, b-1]}$ there is $\rho \in C_{b}$ such that $\sigma \leq \rho$; and
\item \label{thrm atomicity cons 1}
The factor graph $\Gamma_B$ is strongly connected or is a bicycle with no ambiguous vertices.
\end{thmenumerate}
\end{thrm}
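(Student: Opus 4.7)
The plan is to establish both implications by linking atomicity of $C$ to the structure of the factor graph $\Gamma_B$ and to the coverage of short relations by length-$b$ ones. The necessity direction will combine Lemma \ref{lemma collection graph not atomic so is C} with the obstructions provided by Lemmas \ref{lemma amb vert not in cycle then not atomic} and \ref{lemma rest not atomic}, while the sufficiency direction will first establish atomicity of $C_{[b,\infty)}$ using Lemmas \ref{lemma strongly connected atomic} and \ref{lemma atomicity no amb verts}, and then promote this to atomicity of all of $C$ by means of condition (i).

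For the forward direction, assume $C$ is atomic. To prove (i), given $\sigma \in C_{[1,b-1]}$, pick any $\tau \in C_b$ (non-empty in the non-trivial case); the JEP from Proposition \ref{prop jep} produces $\theta \in C$ with $\sigma, \tau \leq \theta$, so $|\theta| \geq b$, and the desired $\rho$ is any length-$b$ consecutive sub-equivalence relation of $\theta$ containing $\sigma$. For (ii), Lemma \ref{lemma collection graph not atomic so is C} yields atomicity of $\Gamma_B$, so Proposition \ref{prop digraphs atomic} forces $\Gamma_B$ to be strongly connected or a bicycle. If strongly connected we are done; if $\Gamma_B$ is a bicycle consisting of a single cycle, then it is strongly connected as well; in the remaining sub-case $\Gamma_B$ is a bicycle that is not a single cycle, where Lemma \ref{lemma amb vert not in cycle then not atomic} rules out ambiguous vertices not contained in any cycle and Lemma \ref{lemma rest not atomic} rules out ambiguous vertices contained in either of its two cycles, so $\Gamma_B$ has no ambiguous vertex and (ii) holds.

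For the backward direction, assume (i) and (ii). The key intermediate claim is that $C_{[b,\infty)}$ is atomic. If $\Gamma_B$ is strongly connected, this is Lemma \ref{lemma strongly connected atomic}; if instead $\Gamma_B$ is a bicycle with no ambiguous vertices, then Proposition \ref{prop digraphs atomic} yields atomicity of $\Gamma_B$ and Lemma \ref{lemma atomicity no amb verts} transfers it to $C_{[b,\infty)}$. To promote atomicity to all of $C$, take $\alpha, \beta \in C$; use (i) to replace any of them that has length less than $b$ by an element of $C_b$ containing it, and leave the others unchanged; then apply the JEP in $C_{[b,\infty)}$ to the two replacements to produce a common upper bound in $C_{[b,\infty)} \subseteq C$, which by transitivity dominates both $\alpha$ and $\beta$. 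Proposition \ref{prop jep} then gives atomicity of $C$.

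The main obstacle is the case analysis for $\Gamma_B$ being a bicycle in the forward direction: the available lemmas on ambiguous vertices must jointly cover every possible position such a vertex can occupy within the bicycle, and the separation into ``vertex not in any cycle'' versus ``vertex lying inside one of the two cycles'' invokes two different results. The degenerate sub-case where the bicycle is a single cycle requires a separate observation that such a bicycle is automatically strongly connected, so that its ambiguous vertices (if any) are not governed by Lemma \ref{lemma rest not atomic} but are harmless because the first branch of condition (ii) is satisfied.
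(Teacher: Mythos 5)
Your proposal is correct and follows essentially the same route as the paper's proof: necessity via Lemma \ref{lemma collection graph not atomic so is C}, Proposition \ref{prop digraphs atomic} and Lemmas \ref{lemma amb vert not in cycle then not atomic}/\ref{lemma rest not atomic} (including the degenerate single-cycle bicycle case), and sufficiency via Lemmas \ref{lemma strongly connected atomic} and \ref{lemma atomicity no amb verts} followed by the lift from $C_{[b,\infty)}$ to $C$ using condition (i). The only cosmetic difference is that you invoke Lemma \ref{lemma amb vert not in cycle then not atomic} separately for ambiguous vertices off the cycles, whereas the paper cites Lemma \ref{lemma rest not atomic} alone, which already subsumes that case.
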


\begin{proof}
($\Rightarrow$) Suppose $C$ is atomic.  To show that \ref{thrm atomicity cons 0} must hold consider an arbitrary 
$\sigma \in C_{[1, b-1]}$.  Take any $\theta \in C_{b}$.  Since $C$ is atomic, there must be an element $\gamma \in C_{[b, \infty)}$ containing both $\sigma$ and $\theta$.  Since $|\gamma|\geq b$, we can take any $b$ consecutive points, that include those of the embedding of $\sigma$,
to obtain  a relation of length $b$ containing $\gamma$.  Therefore \ref{thrm atomicity cons 0} holds.

Now we show that  \ref{thrm atomicity cons 1}  holds.  By Lemma \ref{lemma collection graph not atomic so is C}, $\Gamma_{B}$ is atomic so by Proposition~\ref{prop digraphs atomic}, either $\Gamma_{B}$ is strongly connected or $\Gamma_{B}$ is a bicycle.  
Furthermore, in the latter case, either $\Gamma_B$ is actually a cycle, in which case it is again strongly connected, or else it has no ambiguous vertices by Lemma \ref{lemma rest not atomic}.
This completes the proof of the forward direction.

($\Leftarrow$) Suppose \ref{thrm atomicity cons 0} and  \ref{thrm atomicity cons 1} hold.  
 Lemmas \ref{lemma strongly connected atomic} and \ref{lemma atomicity no amb verts} together imply that $C_{[b, \infty)}$ is atomic.  To extend this to $C$, let $\sigma, \rho \in C$ be arbitrary.  Then there exist $\sigma^{\prime}, \rho^{\prime} \in C_{[b, \infty)}$ such that $\sigma \leq \sigma^{\prime}$ and $\rho \leq \rho^{\prime}$, where \ref{thrm atomicity cons 0} is used if either $\sigma$ or $\rho$ has length $<b$.  By atomicity of $C_{[b, \infty)}$, there is an equivalence relation $\theta \in C_{[b, \infty)}$ such that $\sigma^{\prime}, \rho^{\prime} \leq \theta$.  Then $\sigma, \rho \leq \theta$ as well, meaning that $C$ satisfies the JEP and so is atomic by Proposition \ref{prop jep}.
\end{proof}

In order to turn the above characterisation into a decidability result, we need the following:

\begin{prop}
\label{prop decidability amb vert}
It is decidable whether a special vertex is ambiguous.
\end{prop}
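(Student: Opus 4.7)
The plan is to reduce the proposition to standard finite-graph reachability via the following characterisation.

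\emph{Claim.} A special vertex $\nu$ of $\Gamma_B$ is ambiguous if and only if there exist vertices $v,\mu$ of $\Gamma_B$ such that (a)~$\{1\}$ is an equivalence class of $v$, (b)~$(\mu,\nu)$ is an edge of $\Gamma_B$, and (c)~there is a directed path in $\Gamma_B$ from $v$ to $\mu$, possibly of length zero (so the case $v=\mu$ is allowed).

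Granted the claim, decidability follows immediately: since $\Gamma_B$ is finite, we enumerate all $v\in V(\Gamma_B)$ containing $\{1\}$ as a class and all in-neighbours $\mu$ of $\nu$, and for each pair decide by BFS whether $\mu$ is reachable from $v$. This is the decision procedure, so the work is to establish the claim.

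For the forward direction, suppose $\nu$ is ambiguous, witnessed by a predecessor $\mu$ and an equivalence relation $\sigma$ with $\Pi(\sigma)$ ending at $\mu$ and having an inactive class $C$. Writing $m=\max C$ and $n=|\sigma|$, inactivity forces $m\le n-b+1$, so the $m$-th vertex $v_m$ of $\Pi(\sigma)$ is well defined. The window $[m,m+b-1]$ encoded by $v_m$ contains the element $m\in C$ but, because $\max C=m$, no other element of $C$. Hence $C$ contributes exactly the singleton $\{1\}$ (in window coordinates) to $v_m$, so $v_m$ satisfies (a); the subpath of $\Pi(\sigma)$ from $v_m$ to $\mu$ furnishes the path required in~(c).

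For the converse, suppose $v,\mu$ satisfy (a)--(c) and let $\pi: v = v_1\to v_2\to\dots\to v_k=\mu$ be the witnessing path. I would construct $\sigma\in\Sigma(\pi)$ of length $n=k+b-1$ by the rule: traverse $\pi$, and whenever the next vertex visited is ambiguous, place the newly arriving $\sigma$-entry in a brand-new equivalence class rather than in any currently latent class. At non-ambiguous vertices the placement is forced by the transition, and at ambiguous vertices ``start a new class'' is always a locally consistent choice (the vertex merely stipulates that the new entry form its own class within the current window). Thus $\sigma$ is a well-defined equivalence relation with $\Pi(\sigma)=\pi$. Since $\{1\}$ is a class of $v=v_1$, position~$1$ of $\sigma$ does not share its class with any of positions $2,\dots,b$; and because the rule never extends any latent class, no position $>b$ joins it either. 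Therefore the $\sigma$-class of position~$1$ equals $\{1\}$, whose maximum $1$ is at most $n-b+1=k$, so it is inactive in $\sigma$. This $\sigma$ witnesses that $\nu$ is ambiguous.

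The one point to be careful about is the executability of the ``always start a new class'' rule in the presence of non-ambiguous special vertices along $\pi$; but the rule imposes only a locally consistent choice that is available at every special vertex, so the construction never fails. With the claim proved, both conditions on $v$ and $\mu$ reduce to elementary finite searches on $\Gamma_B$, giving the desired decision procedure.
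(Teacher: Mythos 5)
Your proof is correct, and it takes a genuinely different route from the paper's. The paper does not attempt an exact structural characterisation of ambiguity: instead it argues that if $\nu$ is ambiguous then some witnessing path can be shortened, by routing through the at most $t$ special vertices where new classes must be created via shortest connecting paths, to obtain a witness of length at most $t(n-1)$ (with $t$ the number of classes of $\nu$ and $n$ the number of vertices of $\Gamma_B$); decidability then follows by brute-force inspection of all paths of that bounded length. You instead prove an exact criterion -- $\nu$ is ambiguous if and only if some in-neighbour $\mu$ of $\nu$ is reachable in $\Gamma_B$ from a vertex having $\{1\}$ as a singleton class -- whose key observation is that an inactive class of $\sigma$ is detected locally by the window of $\Pi(\sigma)$ positioned at that class's maximum element, and conversely that the ``always open a new class'' policy preserves a singleton class $\{1\}$ of the initial window all the way to the end of the path, keeping it inactive. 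Your criterion checks out against the paper's own examples (e.g.\ the two special vertices of Figure~\ref{ex 4.7 figure}) and reduces the problem to plain reachability, so it buys a polynomial-time test where the paper's argument gives only an exponential search; the only point you lean on implicitly is that $\Sigma(\pi)\neq\emptyset$ for every path $\pi$ in $\Gamma_B$ and that the glued relation lies in $C$ because every forbidden pattern has length at most $b$ and hence sits inside some window -- a fact the paper also uses throughout without isolating it, so this is not a gap.
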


\begin{proof}
Let $\nu$ be a special vertex in a factor graph $\Gamma_{B}$.  Suppose $\nu$ has $t$ equivalence classes and $\Gamma_{B}$ has $n$ vertices.

Suppose that $\nu$ is ambiguous.  Then there exists an edge $(\mu, \nu)$ and an equivalence relation $\sigma$ with an inactive class such that $\pi=\Pi(\sigma)$ ends at $\mu$.  Suppose that $\pi$ starts at $\rho$, an equivalence relation with $l$ classes.  Since $\nu$ is ambiguous, $\sigma$ must have at least $t$ classes, as if we were to extend $\Pi(\sigma)$ to $\nu$, there would be an inactive class to which an entry of the associated equivalence relation could be added.  This means that a new class is added to $\sigma$ in at least $t-l$ vertices of $\pi$; note that all such vertices must be special.  Let $\tau_{1}, \dots, \tau_{t-l}$ be the first $t-l$ of these vertices.  

Now we will describe a path $\pi^{\prime}$ of bounded length ending at $\mu$ such that there is an equivalence relation in $\Sigma(\pi^{\prime})$ with an inactive class.  We let $\pi^{\prime}$ be the path that starts at $\rho$, visits $\tau_{1}, \dots, \tau_{t-l}$ in order, and ends $\mu$, always taking the shortest route between these `stations'.  Each time we take the shortest route this path is of length $\leq n-1$, so the length of $\pi^{\prime}$ is $\leq (t-l+1)(n-1) \leq t(n-1)$, which is a constant not dependent on $\sigma$.  Any equivalence relation $\sigma^{\prime}$ constructed by traversing $\pi^{\prime}$ and adding a new class at $\tau_{1}, \dots, \tau_{t-l}$ has at least $t$ classes so must have an inactive class. 
So $\pi^{\prime}$ is a path of bounded length from which we can see that $\nu$ is ambiguous.

We have shown that if $\nu$ is ambiguous there is a path of length $\leq t(n-1)$ ending at $\mu$ that has an associated equivalence relation with an inactive class.  Therefore, to determine whether $\nu$ is ambiguous, we can examine all paths of length $\leq t(n-1)$ ending at $\mu$ and see if any have an associated equivalence relation with an inactive class.  If so, $\nu$ is ambiguous and, if not, $\nu$ is not ambiguous.  Since we have this bound on the length of paths to check, the decidability result follows.
\end{proof}

\begin{thrm}
\label{thrm decidability atomicity cons}
It is decidable whether a finitely based avoidance set $\Av(B)$ is atomic under the consecutive embedding ordering.
\end{thrm}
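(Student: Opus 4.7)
The plan is to reduce the decidability question directly to the characterisation given by Theorem \ref{thrm atomicity cons}, which says that $\Av(B)$ is atomic if and only if (i) every $\sigma\in C_{[1,b-1]}$ is dominated by some $\rho\in C_b$, and (ii) $\Gamma_B$ is either strongly connected or a bicycle with no ambiguous vertices. It therefore suffices to show that each of (i) and (ii) can be tested algorithmically from the input $B$.

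First I would build the data structures: from $B$ compute $b=\max\{|\rho|:\rho\in B\}$, enumerate the finite set $\overline{\Eq}_b$, and extract the vertex set $C_b$ of $\Gamma_B$ by discarding those $\mu\in\overline{\Eq}_b$ which contain some element of $B$ (a finite check). The edges of $\Gamma_B$ are then determined by the definition via restriction to $[2,b]$ and $[1,b-1]$, which is again a finite test. Having $\Gamma_B$ explicitly in hand, condition (i) is settled by enumerating $C_{[1,b-1]}$ (finite) and $C_b$ (finite) and, for each pair, checking whether a contiguous embedding exists.

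Next I would check the two alternatives in condition (ii). Strong connectivity of a finite digraph is decidable by a standard reachability computation. Whether $\Gamma_B$ is a bicycle is decidable because being a bicycle is a purely structural property of a finite digraph (two disjoint simple cycles, possibly empty, joined by a simple connecting path meeting them only at its endpoints); one can enumerate all candidate decompositions. If $\Gamma_B$ turns out to be a bicycle but not strongly connected, it remains to verify that it contains no ambiguous vertices. The special vertices are immediately identifiable from the vertex labels (those whose largest element $b$ lies in a singleton class), and for each such vertex Proposition \ref{prop decidability amb vert} provides an algorithm for deciding ambiguity.

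I expect the only subtlety to be keeping the case analysis tidy: one must observe that the two alternatives in (ii) overlap harmlessly (a bicycle in which one of the two cycles is empty and the connecting path is absent is simply a single cycle, hence strongly connected), and that the ``no ambiguous vertices'' clause only needs to be tested in the bicycle case, as required by Theorem \ref{thrm atomicity cons}. Since each sub-check is decidable, combining them yields the claimed decision procedure and hence proves the theorem.
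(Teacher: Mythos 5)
Your proposal is correct and follows essentially the same route as the paper: reduce to the characterisation in Theorem \ref{thrm atomicity cons}, observe that condition \ref{thrm atomicity cons 0} is a finite check, that strong connectivity and being a bicycle are decidable structural properties of the finite digraph $\Gamma_B$, and invoke Proposition \ref{prop decidability amb vert} for ambiguity of vertices. The extra detail you give on constructing $\Gamma_B$ and on the harmless overlap of the two alternatives in condition \ref{thrm atomicity cons 1} is consistent with, and slightly more explicit than, the paper's argument.
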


\begin{proof} 
The condition \ref{thrm atomicity cons 0} from Theorem \ref{thrm atomicity cons} is decidable since there are finitely many elements in $C_{[1, b-1]}$ and $C_{b}$ and so we can check whether all elements of $C_{[1, b-1]}$ are contained in elements of $C_{b}$.  It is also decidable whether $\Gamma_{B}$ is strongly connected or a bicycle, and by Proposition \ref{prop decidability amb vert} it is decidable whether $\Gamma_{B}$ contains ambiguous vertices.  Therefore the conditions of Theorem \ref{thrm atomicity cons} are decidable, and the result follows.
\end{proof}

\begin{ex}
\label{ex strongly conn atomic}
Consider $C=\Av(\lvertb1 \h 2 \h 3\lverte, \lvertb1 \h 3\lvertm2\lverte)$, as in Example \ref{ex section 4 summary 2}, whose factor graph is shown in Figure \ref{ex 4.27 figure}.  The equivalence relations in $C_{[1, b-1]}=C_{[1, 2]}$ are $\lvertb1\lverte, \lvertb1 \h 2\lverte$ and $\lvertb1\lvertm2\lverte$.  It can be seen that $\lvertb1\lverte, \lvertb1\lvertm2\lverte, \lvertb1 \h 2\lverte \leq \lvertb1 \h 2\lvertm3\lverte \in C_{3}$ so \ref{thrm atomicity cons 0} holds from Theorem \ref{thrm atomicity cons}.  Since the factor graph of $C$ is strongly connected, Theorem \ref{thrm atomicity cons} gives that $C$ is atomic.
\end{ex}

\begin{ex}
\label{ex bic no amb verts atomic}
The avoidance set $C=\Av(\lvertb1\lvertm2\lvertm3\lverte, \lvertb1 \h 2\lvertm3\lverte)$ from Example \ref{ex section 4 summary 3} has a factor graph which is a bicycle, shown in Figure \ref{ex 4.28 figure}.  The equivalence relations  in $C_{[1, b-1]}=C_{[1, 2]}$ are $\lvertb1\lverte, \lvertb1 \h 2\lverte$ and $\lvertb1\lvertm2\lverte$.  Then condition \ref{thrm atomicity cons 0} holds since 
$\lvertb1\lverte, \lvertb1\lvertm2\lverte, \lvertb1 \h 2\lverte \leq \lvertb1\lvertm2 \h 3\lverte \in C_{3}.$
Moreover, the factor graph contains no special vertices, and therefore no ambiguous vertices, so $C$ is atomic by Theorem \ref{thrm atomicity cons}.
\end{ex}

\begin{ex}
\label{ex not atomic}
Let $B=\overline{\Eq}_{4} \backslash X$ where
\begin{equation*}
X=\{\lvertb1 \h 2 \h 3 \h 4\lverte ,\ \lvertb1 \h 2 \h 3\lvertm4\lverte ,\ \lvertb1\lvertm2 \h 4\lvertm 3 \lverte,\ \lvertb 1 \h 3 \h 4\lvertm2 \lverte,\ \lvertb1\lvertm2 \h 3 \lvertm4\lverte ,\ \lvertb1 \h 2 \h 4\lvertm3\lverte\}
\end{equation*}
and consider $C=\Av(B)$.  The factor graph of $C$ is shown in Figure \ref{ex 4.7 figure}; this graph is neither strongly connected nor a bicycle, so $C$ is not atomic by Theorem \ref{thrm atomicity cons}.
\end{ex}

\section{Concluding remarks and open problems}
\label{sec:conc}

A comparison between the main results of our paper, and it predecessor \cite{mr} is perhaps somewhat intriguing, and points to possible further investigations. In each paper both the atomicity and wqo problems are shown to be decidable for consecutive embedding orderings by translating them into the appropriate factor graphs. The definition of these factor graphs can be viewed as completely analogous between the two papers. However, the criteria for atomicity (\cite[Theorems 5.1, 6.7]{mr} and our Theorem \ref{thrm atomicity cons}) and wqo (\cite[Theorems 5.2, 7.20]{mr} and our Theorem \ref{thrm wqo cdn cons}) are all saying slightly different things. Underlying these differences is perhaps an even more intriguing difference in the notion of ambiguity: while in \cite{mr} this refers to paths, for us it is a property of vertices.

This, in the authors' opinion, justifies further investigation of these properties for consecutive embedding orderings:

\begin{que}
\label{qu:1}
Are the atomicity and wqo problems decidable for consecutive embedding orderings of: (a) digraphs; (b) tournaments; (c) partial orders.
\end{que}

\begin{que}
Does there exist a general framework encompassing the results of \cite{mr} and the present paper, as well as the structures listed in Question \ref{qu:1}?
\end{que}

\begin{que}
Does there exist a (preferably natural) collection of relational structures for which either the atomicity or wqo problems under the consecutive embeddings are not decidable? Can these structures be chosen to have a single relation in their signature? Or even a single binary relation?
\end{que}

\section*{Acknowledgements} 
The authors would like to thank the anonymous referee for their helpful suggestions to improve the paper.

\section*{Declarations}

\subsection*{Funding}
None.

\subsection*{Competing interests}
None.

\subsection*{Data availability}
Data sharing not applicable to this article as no datasets were generated or analysed during the current study.

\subsection*{Author contributions}
The authors have contributed in equal measure to all aspects of the article. 
Both authors read and approved the final manuscript.

\bibliographystyle{plain}

\end{document}